\newcommand{\tun}{\begin{picture}(5,0)(-2,-1)
\put(0,0){\circle*{2}}
\end{picture}}
\newcommand{\tdeux}{\begin{picture}(7,7)(0,-1)
\put(3,0){\circle*{2}}
\put(3,0){\line(0,1){5}}
\put(3,5){\circle*{2}}
\end{picture}}
\newcommand{\ttroisun}{\begin{picture}(15,12)(-5,-1)
\put(3,0){\circle*{2}}
\put(-0.65,0){$\vee$}
\put(6,7){\circle*{2}}
\put(0,7){\circle*{2}}
\end{picture}}
\newcommand{\ttroisdeux}{\begin{picture}(5,15)(-2,-1)
\put(0,0){\circle*{2}}
\put(0,0){\line(0,1){5}}
\put(0,5){\circle*{2}}
\put(0,5){\line(0,1){5}}
\put(0,10){\circle*{2}}
\end{picture}}
\newcommand{\tquatreun}{\begin{picture}(15,12)(-5,-1)
\put(3,0){\circle*{2}}
\put(-0.65,0){$\vee$}
\put(6,7){\circle*{2}}
\put(0,7){\circle*{2}}
\put(3,7){\circle*{2}}
\put(3,0){\line(0,1){7}}
\end{picture}}
\newcommand{\tquatredeux}{\begin{picture}(15,18)(-5,-1)
\put(3,0){\circle*{2}}
\put(-0.65,0){$\vee$}
\put(6,7){\circle*{2}}
\put(0,7){\circle*{2}}
\put(0,14){\circle*{2}}
\put(0,7){\line(0,1){7}}
\end{picture}}
\newcommand{\tquatretrois}{\begin{picture}(15,18)(-5,-1)
\put(3,0){\circle*{2}}
\put(-0.65,0){$\vee$}
\put(6,7){\circle*{2}}
\put(0,7){\circle*{2}}
\put(6,14){\circle*{2}}
\put(6,7){\line(0,1){7}}
\end{picture}}
\newcommand{\tquatrequatre}{\begin{picture}(15,18)(-5,-1)
\put(3,5){\circle*{2}}
\put(-0.65,5){$\vee$}
\put(6,12){\circle*{2}}
\put(0,12){\circle*{2}}
\put(3,0){\circle*{2}}
\put(3,0){\line(0,1){5}}
\end{picture}}
\newcommand{\tquatrecinq}{\begin{picture}(9,19)(-2,-1)
\put(0,0){\circle*{2}}
\put(0,0){\line(0,1){5}}
\put(0,5){\circle*{2}}
\put(0,5){\line(0,1){5}}
\put(0,10){\circle*{2}}
\put(0,10){\line(0,1){5}}
\put(0,15){\circle*{2}}
\end{picture}}
\newcommand{\tdun}[1]{\begin{picture}(10,5)(-2,-1)
\put(0,0){\circle*{2}}
\put(3,-2){\tiny #1}
\end{picture}}
\newcommand{\tddeux}[2]{\begin{picture}(12,5)(0,-1)
\put(3,0){\circle*{2}}
\put(3,0){\line(0,1){5}}
\put(3,5){\circle*{2}}
\put(6,-2){\tiny #1}
\put(6,3){\tiny #2}
\end{picture}}
\newcommand{\tdtroisun}[3]{\begin{picture}(20,12)(-5,-1)
\put(3,0){\circle*{2}}
\put(-0.65,0){$\vee$}
\put(6,7){\circle*{2}}
\put(0,7){\circle*{2}}
\put(5,-2){\tiny #1}
\put(9,5){\tiny #2}
\put(-5,5){\tiny #3}
\end{picture}}
\newcommand{\tdtroisdeux}[3]{\begin{picture}(12,15)(-2,-1)
\put(0,0){\circle*{2}}
\put(0,0){\line(0,1){5}}
\put(0,5){\circle*{2}}
\put(0,5){\line(0,1){5}}
\put(0,10){\circle*{2}}
\put(3,-2){\tiny #1}
\put(3,3){\tiny #2}
\put(3,9){\tiny #3}
\end{picture}}
\newcommand{\ptroisun}{\begin{picture}(15,12)(-5,-1)
\put(3,7){\circle*{2}}
\put(-0.65,0){$\wedge$}
\put(6,0){\circle*{2}}
\put(0,0){\circle*{2}}
\end{picture}}
\newcommand{\pquatreun}{\begin{picture}(15,12)(-5,-1)
\put(3,7){\circle*{2}}
\put(-0.65,0){$\wedge$}
\put(6,0){\circle*{2}}
\put(0,0){\circle*{2}}
\put(3,0){\circle*{2}}
\put(2.9,0){\line(0,1){7}}
\end{picture}}
\newcommand{\pquatredeux}{\begin{picture}(15,18)(-5,-1)
\put(3,14){\circle*{2}}
\put(-0.65,7){$\wedge$}
\put(6,7){\circle*{2}}
\put(0,7){\circle*{2}}
\put(0,0){\circle*{2}}
\put(0,0){\line(0,1){7}}
\end{picture}}
\newcommand{\pquatretrois}{\begin{picture}(15,18)(-5,-1)
\put(3,14){\circle*{2}}
\put(-0.65,7){$\wedge$}
\put(6,7){\circle*{2}}
\put(0,7){\circle*{2}}
\put(6,0){\circle*{2}}
\put(6,0){\line(0,1){7}}
\end{picture}}
\newcommand{\pquatrequatre}{\begin{picture}(15,18)(-5,-1)
\put(3,7){\circle*{2}}
\put(-0.65,0){$\wedge$}
\put(6,0){\circle*{2}}
\put(0,0){\circle*{2}}
\put(3,12){\circle*{2}}
\put(3,7){\line(0,1){5}}
\end{picture}}
\newcommand{\pquatrecinq}{\begin{picture}(15,12)(-5,-1)
\put(0,0){\circle*{2}}
\put(7,0){\circle*{2}}
\put(0,7){\circle*{2}}
\put(7,7){\circle*{2}}
\put(0,0){\line(0,1){7}}
\put(7,0){\line(0,1){7}}
\put(.5,1.5){$\scriptstyle \diagup$}
\end{picture}}
\newcommand{\pquatresix}{\begin{picture}(15,12)(-5,-1)
\put(0,0){\circle*{2}}
\put(7,0){\circle*{2}}
\put(0,7){\circle*{2}}
\put(7,7){\circle*{2}}
\put(0,0){\line(0,1){7}}
\put(7,0){\line(0,1){7}}
\put(0,1.5){$\scriptstyle \diagdown$}
\end{picture}}
\newcommand{\pquatresept}{\begin{picture}(15,12)(-5,-1)
\put(0,0){\circle*{2}}
\put(7,0){\circle*{2}}
\put(0,7){\circle*{2}}
\put(7,7){\circle*{2}}
\put(0,0){\line(0,1){7}}
\put(7,0){\line(0,1){7}}
\put(.5,1.5){$\scriptstyle \diagup$}
\put(0,1.5){$\scriptstyle \diagdown$}
\end{picture}}
\newcommand{\pquatrehuit}{\begin{picture}(15,18)(-5,-1)
\put(3,0){\circle*{2}}
\put(-0.65,0){$\vee$}
\put(6,7){\circle*{2}}
\put(0,7){\circle*{2}}
\put(3,14){\circle*{2}}
\put(-0.65,7){$\wedge$}
\end{picture}}
\newcommand{\pdtroisun}[3]{\begin{picture}(20,12)(-5,-1)
\put(3,7){\circle*{2}}
\put(-0.65,0){$\wedge$}
\put(6,0){\circle*{2}}
\put(0,0){\circle*{2}}
\put(5,5){\tiny #1}
\put(-5,-2){\tiny #2}
\put(9,-2){\tiny #3}
\end{picture}}
\newcommand{\pdquatresept}[4]{\begin{picture}(20,12)(-5,-1)
\put(0,0){\circle*{2}}
\put(7,0){\circle*{2}}
\put(0,7){\circle*{2}}
\put(7,7){\circle*{2}}
\put(0,0){\line(0,1){7}}
\put(7,0){\line(0,1){7}}
\put(.5,1.5){$\scriptstyle \diagup$}
\put(0,1.5){$\scriptstyle \diagdown$}
\put(-5,-2){\tiny #1}
\put(9,-2){\tiny #2}
\put(-5,5){\tiny #3}
\put(9,5){\tiny #4}
\end{picture}}
\newcommand{\DP}{\mathcal{DP}}
\newcommand{\WNP}{\mathcal{WNP}}
\newcommand{\PP}{\mathcal{PP}}
\newcommand{\PF}{\mathcal{PF}}
\newcommand{\D}{\mathcal{D}}
\newcommand{\F}{\mathcal{F}}
\renewcommand{\P}{\mathcal{P}}
\newcommand{\R}{\mathcal{R}}
\newcommand{\h}{\mathcal{H}}
\renewcommand{\S}{\mathfrak{S}}
\newcommand{\prodg}{\rightsquigarrow}
\newcommand{\prodh}{\lightning}
\newcommand{\tdelta}{\tilde{\Delta}}
\newcommand{\rond}[1]{*++[o][F-]{#1}}
\title{Algebraic structures on double and plane posets}
\date{}
\author{Loïc Foissy \\ \\
{\small{\it Laboratoire de Mathématiques, Université de Reims}}\\
\small{{\it Moulin de la Housse - BP 1039 - 51687 REIMS Cedex 2, France}}\\
\small{e-mail: loic.foissy@univ-reims.fr}}
\newtheorem{defi}{\indent Definition}
\newtheorem{lemma}[defi]{\indent Lemma}
\newtheorem{cor}[defi]{\indent Corollary}
\newtheorem{theo}[defi]{\indent Theorem}
\newtheorem{prop}[defi]{\indent Proposition}
\newenvironment{proof}{{\bf Proof.}}{\hfill $\Box$}
\begin{document}

\maketitle

ABSTRACT. We study the Hopf algebra of double posets and two of its Hopf subalgebras, the Hopf algebras of plane posets and of posets "without N". 
We prove that they are free, cofree, self-dual, and we give an explicit Hopf pairing on these Hopf algebras. We also prove that they are free $2$-$As$ algebras;
in particular, the Hopf algebra of posets "without N" is the free $2$-$As$ algebra on one generator.
We deduce a description of the operads of $2$-$As$ algebras and of $B_\infty$ algebras in terms of plane posets.\\

KEYWORDS. Combinatorial Hopf algebras, $2$-$As$ algebras, double posets, plane posets.\\

AMS CLASSIFICATION. 16W30, 06A11.

\tableofcontents

\section*{Introduction}

The Hopf algebra of double posets is introduced by Malvenuto and Reutenauer in \cite{Reutenauer}:
a \emph{ double poset} is a finite set with two partial orders (definition \ref{1}); the vector space $\h_{\DP}$ generated by the set $\DP$ inherits two products,
here denoted by $\prodg$ and $\prodh$ (definition \ref{2}), and a coproduct $\Delta$ given by the ideals of the posets
(proposition \ref{28}), such that $(\h_{\DP},\prodg,\Delta)$ is a graded, connected Hopf algebra. Moreover, a Hopf pairing $\langle-,-\rangle$ 
is combinatorially defined on $\h_{\DP}$ (definition \ref{30}).

We here study this Hopf algebra $\h_{\DP}$ and some of its Hopf subalgebras:
the Hopf algebra of plane posets $\h_{\PP}$ (definition \ref{10}), the algebra of WN posets $\h_{\WNP}$ (definition  \ref{21})
and the algebra of plane forests $\h_{\PF}$. We shall say that a double poset $P$ is \emph{ plane} if its two partial orders $\leq_h$ and $\leq_r$ satisfy
a certain compatibility condition . We shall say that a plane poset is \emph{ WN} ("without N") if it does not contain $\pquatresix$ 
nor $\pquatrecinq$ as plane subposets. Finally, \emph{ plane forests} are plane posets whose Hasse graph is a rooted forest.

Note that $\h_{\PF}$ is equal to the non commutative Connes-Kreimer Hopf algebra of plane forests, 
introduced in \cite{Foissy,Holtkamp}.  Using the involution $\iota$ permuting the two partial orders of any double poset, we prove
that the restriction of the pairing $\langle-,-\rangle$ to any of these subalgebras is non-degenerate, with the possible exception of $\h_{\DP}$
if the base ring does not contain $\mathbb{Q}$. \\

The notion of $2$-$As$ algebra is introduced and studied in \cite{Loday,Loday2}: a $2$-$As$ algebra is an algebra with two associative products,
sharing the same unit. We prove here that $\h_{\DP}$, $\h_{\PP}$ and $\h_{\WNP}$, with their products $\prodg$ and $\prodh$,
are free $2$-$As$ algebras. In particular, the last one is the free $2$-$As$ algebra on one generator $\tun$: this gives an alternative
description of free $2$-$As$ algebras. As a consequence, the space of primitive elements of these Hopf algebras inherit a structure of 
free $B_\infty$-algebras.  Recall that a $B_\infty$-algebra is a vector space $V$ with a family of linear maps $[-,-]_{m,n}:A^{\otimes m}\otimes A^{\otimes n}
\longrightarrow A$ for all $m,n\geq 1$; if we consider the unique coalgebra morphism $\star_V:T(V)\otimes T(V) \longrightarrow T(V)$,
such that for all $m,n \in \mathbb{N}^*$, for all $x_1,\cdots,x_m,y_1,\cdots,y_n \in V$:
$$\pi_V((x_1\otimes \cdots \otimes x_m) \star_V (y_1\otimes \cdots \otimes y_n))=[ x_1,\cdots,x_m;y_1,\cdots,y_n]_V,$$
where $\pi_V$ is the canonical projection on $V$, then $(T(V),\star_V,\Delta)$ is a Hopf algebra.
Here, $T(V)$ is given its deconcatenation coproduct $\Delta$ (see \cite{Loday2} for more details and references about $B_\infty$ algebras). 
Using the dual product of the coproduct $\h_{\WNP}$,  we deduce a combinatorial description of the operad of $B_\infty$-algebras, in terms of double posets.\\

This text is organised as follows: the first section introduces the algebra of double posets. It is shown that $(\h_{\DP},\prodg)$
and $(\h_{\DP},\prodh)$ are two free algebras, generated respectively by the set of $1$- and $2$-indecomposable double posets (definition \ref{5}).
We also prove that $\h_{\DP}$ is, as a $2$-$As$ algebra, by the set of double posets that are both $1$- and $2$-indecomposable.

We introduce plane and WN posets, as well as the corresponding Hopf algebras, in the second section. We show that the condition 
for a plane poset $P=(P,\leq_h,\leq_r)$ to be $1$-indecomposable can be reformulated in terms of connectivity of the Hasse diagram of $(P,\leq_h)$, 
a result that may be false in general for double posets (proposition \ref{20}). We prove that $\h_{\PP}$ and $\h_{\WNP}$ are free $2$-$As$ algebras, 
the last one being generated by a single element.

The coproduct of $\h_{\DP}$ is introduced in the third section. It is also proved that $\h_{\DP}$, $\h_{\PP}$ and $\h_{\WNP}$
are $2$-$As$ bialgebras, in the sense of \cite{Loday}. They are all free and cofree.

The fourth section deals with the pairing. We prove that its restrictions to $\h_{\DP}$, $\h_{\PP}$ and $\h_{\WNP}$ are non-degenerate,
using a total order on the sets of double posets and the involution $\iota$. 

The last section is dedicated to a combinatorial description of the operad of $B_\infty$ algebras, with the help of indexed WN posets.
We first give an alternative description of the free $2$-$As$ algebra on one generator, and deduce a description of the free $B_\infty$ algebras
in terms of $1$-indecomposable decorated WN posets. The description of the operads $B_\infty$ and $2$-$As$ is a consequence of these results.\\

The author thanks Professor Christophe Reutenauer for his helpful comments and remarks.\\

{\bf Notations}.  \begin{enumerate}
\item In the whole text, $K$ is a commutative field. Any algebra, coalgebra, Hopf algebra\ldots of the text will be taken over $K$.
\item Let $H$ be a Hopf algebra. Its augmentation ideal is given a coassociative, non counitary coproduct $\tdelta$ defined by
$\tdelta(x)=\Delta(x)-x \otimes 1-1\otimes x$.
\end{enumerate}

\section{Double posets}

We refer to \cite{Stanley} for classical definitions and results on posets.

\subsection{Definitions}

\begin{defi}\label{1} \textnormal{
\cite{Reutenauer} A \emph{ double poset} is a triple $(P,\leq_1,\leq_2)$, where $P$ is a finite set and $\leq_1$, $\leq_2$ are two partial orders on $P$.
The set of isoclasses of double posets will be denoted by $\DP$.
The set of isoclasses of double posets of cardinality $n$ will be denoted by $\DP(n)$ for all $n\geq 0$.
}\end{defi}

\begin{defi}\label{2}\textnormal{
Let $P$ and $Q$ be two elements of $\DP$. 
\begin{enumerate}
\item We define $P\prodg Q\in \DP$ by:
\begin{itemize}
\item $P\prodg Q$ is the disjoint union of $P$ and $Q$ as a set.
\item $P$ and $Q$ are double subposets of $P \prodg Q$.
\item For all $x\in P$, $y\in Q$, $x \leq_2 y$ in $P \prodg Q$ and $x$ and $y$ are not comparable for $\leq_1$ in $P \prodg Q$.
\end{itemize}
\item We define $P\prodh Q\in \DP$ by:
\begin{itemize}
\item $P\prodh Q$ is the disjoint union of $P$ and $Q$ as a set.
\item $P$ and $Q$ are double subposets of $P \prodh Q$.
\item For all $x\in P$, $y\in Q$, $x \leq_1 y$ in $P \prodh Q$ and $x$ and $y$ are not comparable for $\leq_2$ in $P \prodh Q$.
\end{itemize}
\end{enumerate}} \end{defi}

{\bf Remark.} The product $\prodg$ is called \emph{ composition} in \cite{Reutenauer}.

\begin{prop}
The products $\prodg$ and $\prodh$ are associative.
\end{prop}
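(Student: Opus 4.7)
The plan is to verify associativity of $\prodg$ by direct comparison of the underlying sets and of the two partial orders on $(P\prodg Q)\prodg R$ and $P\prodg(Q\prodg R)$; associativity of $\prodh$ will then follow by symmetry (swapping the two orders).

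First I would fix double posets $P=(P,\leq_1^P,\leq_2^P)$, $Q=(Q,\leq_1^Q,\leq_2^Q)$ and $R=(R,\leq_1^R,\leq_2^R)$. As sets, both $(P\prodg Q)\prodg R$ and $P\prodg (Q\prodg R)$ are the disjoint union $P\sqcup Q\sqcup R$, so it suffices to check that the two partial orders agree on both sides. For the order $\leq_1$, the defining clauses of $\prodg$ add no new comparisons between the two factors; unfolding the definition twice, the resulting $\leq_1$ on either bracketing is simply the disjoint union of $\leq_1^P$, $\leq_1^Q$ and $\leq_1^R$, with no element of one piece comparable to any element of another.

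For the order $\leq_2$, the defining clauses of $\prodg$ declare that every element of the left factor is below every element of the right factor. Unfolding $(P\prodg Q)\prodg R$, one obtains on $P\sqcup Q\sqcup R$ exactly the relations: $\leq_2^P$, $\leq_2^Q$, $\leq_2^R$ inside each block; $x\leq_2 y$ whenever $x\in P,\ y\in Q$; $x\leq_2 y$ whenever $x\in P\sqcup Q,\ y\in R$. Expanding $P\prodg (Q\prodg R)$ gives the same three intra-block orders together with $x\leq_2 y$ whenever $x\in P,\ y\in Q\sqcup R$ and $x\leq_2 y$ whenever $x\in Q,\ y\in R$. These two descriptions coincide, and one checks at once that the resulting relation is reflexive, antisymmetric and transitive (transitivity follows because the ``block index'' $P<Q<R$ is a linearly ordered quotient). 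Hence $(P\prodg Q)\prodg R=P\prodg(Q\prodg R)$ as double posets.

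For $\prodh$, the definition is obtained from that of $\prodg$ by exchanging the roles of $\leq_1$ and $\leq_2$. The same verification, with the indices $1$ and $2$ swapped throughout, yields associativity; equivalently, one may invoke the involution $\iota$ of $\DP$ permuting the two partial orders, under which $\iota(P\prodg Q)=\iota(P)\prodh\iota(Q)$, so associativity of $\prodh$ is transported from that of $\prodg$.

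I expect no real obstacle: the statement is essentially a bookkeeping check that ``extending $\leq_2$ so that one block lies below another'' is an associative operation on three blocks, which reduces to the transitivity of a linear order on three elements. The only thing worth a line of care is confirming that each bracketing really does produce a partial order (i.e.\ transitivity across blocks), which is immediate from the block structure.
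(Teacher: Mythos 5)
Your proof is correct and follows essentially the same route as the paper: both bracketings are identified with the double poset on $P\sqcup Q\sqcup R$ in which each factor is a double subposet, $\leq_2$ places each block below the blocks to its right, and no cross-block $\leq_1$ comparisons occur, with associativity of $\prodh$ obtained by exchanging the two orders. The only cosmetic difference is your optional appeal to the involution $\iota$, which the paper only introduces later and does not need here.
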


\begin{proof} Let us take $P,Q,R \in \DP$. Then $(P\prodg Q)\prodg R$ and $P\prodg (Q\prodg R)$ are both equal to the double poset $S$ defined by:
\begin{itemize}
\item $S$ is the disjoint union of $P$, $Q$ and $R$ as a set.
\item $P$, $Q$ and $R$ are double subposets of $S$.
\item For all $x\in P$, $y\in Q$, $z \in R$, $x \leq_2 y \leq_2 z$ in $S$ and $x$, $y$ and $z$ are not comparable for $\leq_1$ in $S$.
\end{itemize}
So $\prodg$ is associative. The proof is similar for $\prodh$. \end{proof}

\begin{defi}\textnormal{
Let us denote by $\h_{\DP}$ the $K$-vector space generated by $\DP$. We extend $\prodg$ and $\prodh$ by linearity on $\h_{\DP}$.
As a consequence, $(\h_{\DP},\prodg,\prodh)$ is a $2$-$As$-algebra  \cite{Loday,Loday2}, that is to say an algebra with two associative products
sharing the same unit, the empty double poset $1$.
}\end{defi}

{\bf Remark.} We shall see that it is a free $2$-$As$-algebra in theorem \ref{9}.

\subsection{Indecomposable double posets}

\begin{defi}\label{5}\textnormal{Let $P$ be a double poset. 
\begin{enumerate}
\item We shall say that $P$ is \emph{ $1$-indecomposable} if for any $I \subseteq P$:
$$(\forall x \in I,\:\forall y\in P\setminus I,\:x \leq_2 y \mbox{ and }x, y\mbox{ are not }\leq_1\mbox{-comparable})
\Longleftrightarrow (I=\emptyset \mbox{ or }I=P).$$
\item We shall say that $P$ is \emph{ $2$-indecomposable} if for any $I \subseteq P$:
$$(\forall x \in I,\:\forall y\in P\setminus I,\:x \leq_1 y \mbox{ and }x, y\mbox{ are not }\leq_2\mbox{-comparable})
\Longleftrightarrow (I=\emptyset \mbox{ or }I=P).$$
\item We shall say that $P$ is \emph{ $1,2$-indecomposable} if it is both $1$- and $2$-indecomposable.
\end{enumerate}}\end{defi}

{\bf Remark.} In other words, $P$ is not $1$-indecomposable if there exists $\emptyset \subsetneq I,J \subsetneq P$, such that $P=I \prodg J$;
$P$ is not $2$-indecomposable if there exists $\emptyset \subsetneq I,J \subsetneq P$, such that $P=I \prodh J$.

\begin{prop} \label{6}
Let $P$ be a double poset.
\begin{enumerate}
\item $P$ can be uniquely written as $P=P_1 \prodg \ldots \prodg P_k$, where $P_1,\ldots,P_k$ are $1$-indecomposable double posets.
\item $P$ can be uniquely written as $P=P'_1 \prodh \ldots \prodh P'_l$, where $P'_1,\ldots,P'_l$ are $2$-indecomposable double posets.
\end{enumerate}\end{prop}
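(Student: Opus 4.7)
The two statements are symmetric (swap the roles of $\leq_1$ and $\leq_2$), so I would only treat (1). The plan is to describe the decomposition intrinsically via a distinguished chain of subsets. Call $I\subseteq P$ \emph{admissible} if, for every $x\in I$ and $y\in P\setminus I$, one has $x\leq_2 y$ and $x, y$ are not $\leq_1$-comparable. The remark following Definition \ref{5} says precisely that specifying a nontrivial admissible subset (with $\emptyset\subsetneq I\subsetneq P$) is the same as specifying a decomposition $P=I\prodg (P\setminus I)$; in particular $P$ is $1$-indecomposable iff its only admissible subsets are $\emptyset$ and $P$.

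The key lemma, and the step I expect to be the main obstacle, is that the admissible subsets of $P$ are totally ordered by inclusion. I would prove this by contradiction: given admissible $I, J$ with $x\in I\setminus J$ and $y\in J\setminus I$, admissibility of $I$ forces $x\leq_2 y$, and admissibility of $J$ forces $y\leq_2 x$, hence $x=y$, absurd. So the finite family of admissible subsets of $P$ is a chain $\emptyset=I_0\subsetneq I_1\subsetneq\cdots\subsetneq I_k=P$. Set $P_i = I_i\setminus I_{i-1}$, regarded as a double sub-poset of $P$.

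For existence, the fact that each $I_i$ is admissible immediately gives $P=P_1\prodg\cdots\prodg P_k$. Each $P_i$ is $1$-indecomposable: a nontrivial decomposition $P_i=A\prodg B$ would furnish an admissible subset $I_{i-1}\cup A$ of $P$ lying strictly between $I_{i-1}$ and $I_i$, contradicting the fact that the $I_j$ exhaust all admissible subsets.

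For uniqueness, suppose $P=Q_1\prodg\cdots\prodg Q_l$ with each $Q_j$ $1$-indecomposable. Then the partial unions $J_j = Q_1\sqcup\cdots\sqcup Q_j$ are admissible subsets of $P$. Conversely, if some admissible subset $K$ of $P$ lay strictly between $J_{j-1}$ and $J_j$, then $A:=K\setminus J_{j-1}$ and $B:=Q_j\setminus A$ would produce a nontrivial decomposition $Q_j=A\prodg B$, contradicting $1$-indecomposability of $Q_j$. Hence the chain $(J_j)$ must coincide with $(I_i)$, which forces $l=k$ and $Q_j=P_j$ for every $j$. The real work of the proof is concentrated in the ``totally ordered'' lemma; once that is in hand, the rest reduces to straightforward bookkeeping.
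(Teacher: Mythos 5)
Your proof is correct, and it takes a genuinely different route from the paper. The paper proves existence by induction on the cardinality of $P$ (split off any nontrivial $I\prodg J$ and recurse), and proves uniqueness by comparing the first factors of two decompositions: it looks at $I=P_1\cap Q_1$, invokes $1$-indecomposability of $Q_1$ to force $I=\emptyset$ or $Q_1$, rules out $I=\emptyset$ by producing a $\leq_2$-minimal element of $P$ (which must lie in both $P_1$ and $Q_1$), and then iterates. You instead characterize the decomposition intrinsically: your key lemma, that the admissible subsets of $P$ are totally ordered by inclusion (via the antisymmetry trick $x\leq_2 y$ and $y\leq_2 x$), makes the full chain $\emptyset=I_0\subsetneq\cdots\subsetneq I_k=P$ of admissible subsets a canonical object whose consecutive differences are exactly the $1$-indecomposable factors, and any other decomposition's partial unions must be admissible and hence must coincide with that chain. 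This buys existence and uniqueness in a single uniform argument, with no induction and no appeal to minimal elements; it also exhibits all possible $\prodg$-factorizations of $P$ at once (they correspond to subchains of the admissible chain), which is more information than the statement asks for. The paper's argument is shorter to state but splits into two separate mechanisms; yours concentrates the work in one clean lemma. All the verifications you leave as bookkeeping (that $I_{i-1}\cup A$ is admissible, that $K\cap Q_j$ yields a decomposition of $Q_j$) do go through using only that comparabilities in a double subposet of $P$ are those inherited from $P$, so there is no gap.
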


\begin{proof} We only prove the first point. The proof of the second point in similar, permuting $\leq_1$ and $\leq_2$.\\

\emph{ Existence.} By induction on $n=Card(P)$. If $n=1$, then $P$ is $1$-indecomposable, so we choose $k=1$ and $P_1=P$.
Let us assume the result at all rank $<n$. If $P$ is $1$-indecomposable, it can be written as $P=P$. If not, there exists
$\emptyset \subsetneq I,J \subsetneq P$, such that $P=I \prodg J$. Then the induction hypothesis holds for $I$ and $J$.
So $I=P_1\prodg \ldots \prodg P_s$ and $J=P_{s+1}\prodg\ldots \prodg P_k$, where the $P_i$ are $1$-indecomposable.
Hence, $P=I \prodg J=P_1\prodg\ldots \prodg P_k$.\\

\emph{ Unicity.} Let us assume that $P=P_1\prodg \ldots \prodg P_k=Q_1\prodg \ldots \prodg Q_l$, where the $P_i$ and the $Q_j$ are $1$-indecomposable.
The $P_i$'s and the $Q_j$'s are part of $P$; let us consider $I=P_1 \cap Q_1$.
For all $x \in I$, $y\in Q_1\setminus I=Q_1\cap (P_2 \prodg \ldots \prodg P_k)$, $x \leq_2 y$ and $x,y$ are not $\leq_1$-comparable.
As $Q_1$ is $1$-indecomposable, $I=Q_1$ or $I=\emptyset$. Let $x \in P$ be a minimal element for $\leq_2$.
There exists $1\leq i\leq k$, such that $x \in P_i$. If $i\geq 2$, then for any $y\in P_1$, $y<_2 x$: contradicts the minimality of $x$.
So $x\in P_1$ and, similarly, $x\in Q_1$. So $I\neq \emptyset$, so $I=Q_1$ and $Q_1 \subseteq P_1$. By symmetry, $P_1=Q_1$.
We then deduce that $P_2 \prodg \ldots \prodg P_k=Q_2 \prodg \ldots \prodg Q_l$. Using repeatedly the same arguments, we prove
that $k=l$, $P_2=Q_2,\ldots,P_k=Q_k$. \end{proof}\\

{\bf Remark.} As a consequence, $(\h_{\DP},\prodg)$ is freely generated by the set of $1$-indecomposable double posets
and $(\h_{\DP},\prodh)$ is freely generated by the set of $2$-indecomposable double posets.

\begin{lemma}
Let $P$ be a double poset.
\begin{enumerate}
\item If $P$ is not $1$-indecomposable, then $P$ is $2$-indecomposable.
\item If $P$ is not $2$-indecomposable, then $P$ is $1$-indecomposable.
\end{enumerate}\end{lemma}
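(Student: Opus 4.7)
The plan is to prove both assertions simultaneously by contradiction: I assume $P$ is neither $1$-indecomposable nor $2$-indecomposable and derive that one of the decomposing pieces must be empty. By the remark following Definition~\ref{5}, the first assumption yields nonempty proper subsets $I, J \subseteq P$ with $P = I \sqcup J$ such that, for every $x \in I$ and $y \in J$, one has $x \leq_2 y$ while $x$ and $y$ are $\leq_1$-incomparable. Similarly, the second assumption yields nonempty proper subsets $I', J'$ with $P = I' \sqcup J'$ such that, for every $x \in I'$ and $y \in J'$, $x \leq_1 y$ while $x$ and $y$ are $\leq_2$-incomparable.

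Next I pass to the common refinement of the two partitions, consisting of the four disjoint blocks $A = I \cap I'$, $B = I \cap J'$, $C = J \cap I'$, and $D = J \cap J'$. The key observation is a pair of incompatibilities between these blocks. First, $B$ and $C$ cannot both be nonempty: choosing $x \in B$ and $y \in C$, the memberships $x \in I$, $y \in J$ force $x \leq_2 y$, while $x \in J'$, $y \in I'$ force $x$ and $y$ to be $\leq_2$-incomparable, a direct contradiction. Second, $A$ and $D$ cannot both be nonempty, by the analogous argument applied to $x \in A$, $y \in D$: the first partition again gives $x \leq_2 y$, and the second now gives $\leq_2$-incomparability of $x$ and $y$.

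It remains to combine these two emptiness conditions. Since $I = A \cup B$, $J = C \cup D$, $I' = A \cup C$, and $J' = B \cup D$, each of the four possible pairings of an empty member of $\{B, C\}$ with an empty member of $\{A, D\}$ forces one of $I$, $J$, $I'$, $J'$ to be empty, contradicting the nontriviality of the two decompositions. This completes the proof of the first statement; the second is obtained by swapping the roles of $\leq_1$ and $\leq_2$ throughout. The proof is little more than careful bookkeeping of the four blocks; the only real ingredient is that the relation $x \leq_2 y$ and the $\leq_2$-incomparability of $x$ and $y$ exclude one another, so no substantial obstacle is anticipated.
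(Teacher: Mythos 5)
Your argument is correct. It proves the disjunction ``$P$ is $1$-indecomposable or $2$-indecomposable'' by contradiction, which indeed yields both implications at once (so the final ``swap $\leq_1$ and $\leq_2$'' remark is not even needed). The route, however, is organized differently from the paper's. The paper observes that the two points are contrapositives of one another, assumes $P$ is not $2$-indecomposable, writes $P=P'_1\prodh\cdots\prodh P'_l$ with $l\geq 2$ (via proposition \ref{6}), and then shows directly that any nonempty subset $I$ witnessing a $\prodg$-splitting must swallow every factor $P'_j$ and hence equal $P$: elements of distinct $\prodh$-factors are $\leq_1$-comparable, which the defining condition on $I$ forbids across the cut. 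You instead take one binary splitting of each kind, $P=I\prodg J$ and $P=I'\prodh J'$, pass to the common refinement $A=I\cap I'$, $B=I\cap J'$, $C=J\cap I'$, $D=J\cap J'$, and show that $\leq_2$-comparability across the first splitting clashes with $\leq_2$-incomparability across the second, so that $B$ or $C$ is empty and $A$ or $D$ is empty; the four resulting cases each kill one of $I,J,I',J'$. The combinatorial kernel is the same incompatibility in both proofs, but your version is self-contained (it uses only the remark after definition \ref{5}, not the existence and structure of the full $\prodh$-decomposition) and is manifestly symmetric in the two orders, whereas the paper's version has the mild advantage of exhibiting explicitly how a candidate $1$-splitting propagates through the $\prodh$-factors.
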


\begin{proof} Note that the first point is the contraposition of the second point.
Let us assume that $P$ is not $2$-indecomposable. We can write $P=P'_1\prodh \ldots \prodh P'_l$, with $l\geq 2$, $P'_1,\ldots,P'_l$ $2$-indecomposable.
Let $\emptyset \subsetneq I \subseteq P$, such that for all $x \in I$, $\forall y\in P\setminus I$, $x \leq_2 y$ and $x,y$ are not $\leq_1$-comparable.

Let us choose $x\in I$. There exists $1\leq i\leq k$, such that $x\in P'_i$.
If $y\in P'_j$, with $j\neq i$, then $x\leq_1 y$ if $i<j$ or $x \geq_1 y$ if $i>j$, so $x,y$ are $\leq_1$-comparable. By hypothesis on $I$, $y\in I$.
So $P'_j \subseteq I$ if $j\neq i$. 

Let us now choose $j\neq i$ (this is possible, as $l\geq 2$) and $y\in P'_j$. Then $y\in I$ and if $z\in P'_i$, $y,z$ are $\leq_1$-comparable. 
So $z\in I$ and $P'_i \subseteq I$. As a consequence, $I=P$ and $P$ is $1$-indecomposable. \end{proof}\\

As an immediate consequence:

\begin{prop} \label{8}
Let $P$ be a double poset, not equal to $1$. One, and only one, of the following conditions holds:
\begin{itemize}
\item $P$ is $1,2$-indecomposable.
\item $P$ is $1$-indecomposable and not $2$-indecomposable.
\item $P$ is $2$-indecomposable and not $1$-indecomposable.
\end{itemize}\end{prop}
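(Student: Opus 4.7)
The plan is to reduce the statement to the preceding lemma. I view the three conditions as a trichotomy of the boolean pair $(a,b)$ where $a$ means ``$P$ is $1$-indecomposable'' and $b$ means ``$P$ is $2$-indecomposable''. The condition $P\neq 1$ is only used to ensure that the notion of indecomposability is meaningful (the empty double poset is degenerate with respect to both decompositions). First I would observe that the three listed cases correspond exactly to the pairs $(\text{true},\text{true})$, $(\text{true},\text{false})$ and $(\text{false},\text{true})$, so they are pairwise incompatible by construction. Consequently only exhaustivity needs proof: the pair $(\text{false},\text{false})$ must be ruled out.

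Next I would invoke the lemma directly. Assume $P$ is \emph{not} $1$-indecomposable. By the first statement of the preceding lemma, $P$ is then $2$-indecomposable. Hence $b=\text{true}$, so the pair $(\text{false},\text{false})$ cannot occur, and $P$ falls into the third listed case. If, on the other hand, $P$ is $1$-indecomposable, then $P$ falls either into the first case (when $P$ is also $2$-indecomposable) or into the second case (when it is not). This exhausts all possibilities.

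The argument is purely formal once the lemma is in hand; there is no real obstacle, since the only nontrivial input — that $P$ cannot simultaneously fail both indecomposability conditions — is precisely the content of the lemma. The contraposition remark already noted there is what guarantees the equivalence between its two items and avoids redundant work. Thus the proof reduces to a one-line case analysis.
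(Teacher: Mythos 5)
Your proposal is correct and matches the paper's route: the paper derives Proposition \ref{8} as an immediate consequence of the preceding lemma, exactly as you do, since the lemma rules out the case where $P$ fails both indecomposability conditions and the three listed cases are mutually exclusive by definition. Nothing more is needed.
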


\subsection{The $2$-$As$ algebra $\h_{\DP}$}

\begin{theo} \label{9}
As a $2$-$As$ algebra, $\h_{\DP}$ is freely generated by the set of $1,2$-indecomposable double posets.
\end{theo}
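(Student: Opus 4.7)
The plan is to iterate Propositions~\ref{6} and \ref{8} to obtain a canonical normal form for every double poset as an alternating expression in $\prodg$ and $\prodh$ whose leaves are $1,2$-indecomposable, and then verify the universal property of the free $2$-$As$ algebra on the set $S$ of $1,2$-indecomposable double posets.

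First, by induction on $\mathrm{Card}(P)$, I would show that every non-empty double poset $P$ admits a unique representation as such an alternating tree. The trichotomy of Proposition~\ref{8} gives three mutually exclusive cases. If $P \in S$, it is a leaf. If $P$ is $2$-indecomposable but not $1$-indecomposable, Proposition~\ref{6} yields a unique $\prodg$-decomposition $P = P_1 \prodg \cdots \prodg P_k$ with $k \geq 2$ and each $P_i$ $1$-indecomposable; by Proposition~\ref{8}, each such $P_i$ is then either in $S$ or $1$-indecomposable-but-not-$2$-indecomposable, so when we recurse each $P_i$ is either already a leaf or forces a $\prodh$-decomposition at the next level, which automatically produces the required alternation. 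The case where $P$ is $1$-indecomposable but not $2$-indecomposable is symmetric. Since each factor in a non-trivial decomposition has strictly smaller cardinality, the recursion terminates and yields a unique alternating tree.

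Next, given any $2$-$As$ algebra $A$ with products $\star_1, \star_2$ sharing a unit and any set map $f : S \to A$, I would define $\tilde{f} : \h_{\DP} \to A$ recursively, following the three cases above, by $\tilde{f}(1) = 1_A$; $\tilde{f}(P) = f(P)$ for $P \in S$; $\tilde{f}(P) = \tilde{f}(P_1) \star_1 \cdots \star_1 \tilde{f}(P_k)$ in the $\prodg$-decomposable case; and analogously with $\star_2$ in the $\prodh$-decomposable case. Well-definedness follows from the uniqueness part of Proposition~\ref{6} combined with Proposition~\ref{8}, and termination from the strict decrease of cardinality. To check that $\tilde{f}$ is a $2$-$As$ morphism, it suffices to verify $\tilde{f}(P \prodg Q) = \tilde{f}(P) \star_1 \tilde{f}(Q)$ and the symmetric statement for $\prodh$: expanding both $P$ and $Q$ into their unique $\prodg$-factorizations and concatenating gives the unique $\prodg$-factorization of $P \prodg Q$ into $1$-indecomposable factors, after which associativity of $\star_1$ in $A$ yields the identity. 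Uniqueness of $\tilde{f}$ is forced by the fact that its values on the generators $S$ and on the unit determine everything via the recursion.

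The main obstacle is bookkeeping rather than a deep conceptual step: one has to be careful that in the recursive definition of $\tilde{f}$, the cases of Proposition~\ref{8} really do cover every non-trivial double poset uniquely, and that the compatibility check with $\prodg$ never tries to apply the ``$\prodg$-case'' definition when one of $P$, $Q$ is itself $1$-indecomposable (in which case the definition is the base case or goes through $\prodh$). This is handled cleanly by treating the identity $\tilde{f}(P_1 \prodg \cdots \prodg P_k) = \tilde{f}(P_1) \star_1 \cdots \star_1 \tilde{f}(P_k)$ as a lemma valid for any $1$-indecomposable factors $P_i$, proved separately by induction and then used inside the compatibility verification.
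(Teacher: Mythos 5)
Your proposal is correct and follows essentially the same route as the paper: define the morphism recursively on cardinality using the trichotomy of Proposition~\ref{8} and the unique factorizations of Proposition~\ref{6}, check compatibility with $\prodg$ and $\prodh$ by concatenating the $1$- (resp.\ $2$-) indecomposable factorizations and invoking associativity, and get uniqueness because the recursion is forced. The ``alternating tree'' normal form is just an explicit repackaging of the paper's induction, and your separate lemma handling the case where a factor is itself $1$-indecomposable is exactly the (implicit) observation the paper uses.
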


\begin{proof} Let $(A,.,*)$ be a $2$-$As$ algebra and let $a_P\in A$ for all $1,2$-indecomposable double poset $P$.
We have to prove that there exists a unique morphism of $2$-$As$ algebras $\phi:\h \longrightarrow A$, such that
$\phi(P)=a_P$ for all $1,2$-indecomposable double poset $P$.

\emph{ Existence.} We define $\phi(P)$ for $P\in \DP(n)$ by induction on $n$ in the following way:
\begin{itemize}
\item $\phi(1)=1$.
\item If $P$ is $1,2$-indecomposable, $\phi(P)=a_P$.
\item If $P$ is $1$-indecomposable and not $2$-indecomposable, let us put $P=P'_1\prodh \cdots \prodh P'_l$, where the $P'_i$'s are $2$-indecomposable; 
then $\phi(P)=\phi(P'_1)*\cdots*\phi('P_l)$.
\item If $P$ is not $1$-indecomposable and $2$-indecomposable, let us put $P=P_1\prodg \cdots \prodg P_k$, where the $P_i$'s are $1$-indecomposable; 
then $\phi(P)=\phi(P_1).\cdots.\phi(P_k)$.
\end{itemize}
By propositions \ref{6} and \ref{8}, this perfectly defines $\phi$.\\

Let $P,Q \in \DP$. We put $P=P_1\prodg \cdots \prodg P_k$ and $Q=Q_1\prodg\cdots \prodg Q_l$,
where the $P_i$'s and the $Q_i$'s are $1$-indecomposable  double posets. Then:
$$P\prodg Q=P_1\prodg \cdots \prodg P_k\prodg Q_1\prodg\cdots \prodg Q_l,$$
so, by definition of $\phi$:
$$\phi(P\prodg Q)=\phi(P_1)\cdots\phi(P_k)\phi(Q_1)\cdots \phi(Q_l)=(\phi(P_1)\cdots\phi(P_k))(\phi(Q_1)\cdots \phi(Q_l))=\phi(P)\phi(Q).$$
Similarly, we can prove that $\phi(P\prodh Q)=\phi(P)*\phi(Q)$. So $\phi$ satisfies the required properties.\\

\emph{ Unicity.} Such a morphism has to satisfy all the conditions of the existence part, so is equal to $\phi$. \end{proof}

\section{Plane posets}

\subsection{Definition}

\begin{defi}\label{10}\textnormal{
A \emph{ plane poset} is a double poset $(P,\leq_h,\leq_r)$ such that for all $x, y\in P$, such that $x\neq y$,
$x$ and $y$ are comparable for $\leq_h$ if, and only if, $x$ and $y$ are not comparable for $\leq_r$.
The set of isoclasses of plane posets will be denoted by $\PP$. 
For all $n \in \mathbb{N}$, the set of isoclasses of plane posets of cardinality $n$ will be denoted by $\PP(n)$.
}\end{defi}

{\bf Remark.} Let $P \in \PP$ and let $x,y\in P$. Then $(x \leq_h y)$ or $(x \geq_h y)$ or $(x \leq_r y)$ or $(x \geq_r y)$.
Moreover, if $x \neq y$, then these four conditions are two-by-two incompatible.\\

We shall give a graphical representation of plane posets. If $(P,\leq_h,\leq_r)$ is a plane poset, we shall represent the Hasse graph of $(P,\leq_h)$ 
such that if $x <_r y$ in $P$, then $y$ is more on the right than $x$ in the graph. This justifies the notations $\leq_h$ ($h$ is for "high")
and $\leq_r$ ($r$ is for "right") instead of $\leq_1$ and $\leq_2$.\\

{\bf Examples.} \begin{enumerate}
\item Here are the plane posets of cardinal $\leq 4$:
\begin{eqnarray*}
\PP(0)&=&\{\emptyset\},\\
\PP(1)&=&\{\tun\},\\
\PP(2)&=&\{\tun\tun,\tdeux\},\\
\PP(3)&=&\{\tun\tun\tun,\tun\tdeux,\tdeux\tun,\ttroisun,\ttroisdeux,\ptroisun\},\\
\PP(4)&=&\left\{\begin{array}{c}
\tun\tun\tun\tun,\tun\tun\tdeux,\tun\tdeux\tun,\tdeux\tun\tun,\tun\ttroisun,\ttroisun\tun,
\tun\ttroisdeux,\ttroisdeux\tun,\tun\ptroisun,\ptroisun\tun,\tdeux\tdeux,\tquatreun,\\
\tquatredeux,\tquatretrois,\tquatrequatre,\tquatrecinq,\pquatreun,\pquatredeux,
\pquatretrois,\pquatrequatre,\pquatrecinq,\pquatresix,\pquatresept,\pquatrehuit
\end{array}\right\}.\end{eqnarray*}
We shall prove elsewhere \cite{Foissy4} that $Card(\PP(n))=n!$ for all $n\geq 0$. 
\item Let $F$ be a plane forest. We defined in \cite{Foissy2,Foissy3} two partial orders on $F$, which makes it a plane poset.
More precisely, the Hasse graph of $(F,\leq_h)$ is the graph $F$, the edges being oriented from the root to the leaves.
The partial order $\leq_r$ is defined by two vertices $x,y$ which are not comparable for $\leq_h$ in the following way: if $F=t_1\ldots t_n$,
with $x$ a vertex if $t_i$ and $y$ a vertex of $t_j$, 
\begin{itemize}
\item $x \leq_r y$ if $i<j$ and $x \geq_r y$ if $i>j$.
\item If $i=j$, then $x \leq_r y$ if $F$ if, and only if $x \leq_r y$ in the forest obtained by deleting the root of $t_i$.
\end{itemize}
As a conclusion, the Hasse graph of $(F,\leq_h,\leq_r)$ is the plane forest $F$ itself. Such a plane poset will be called a \emph{forest}.
The set of plane forests will be denoted by $\PF$; for all $n\geq 0$, the set of plane forests with $n$ vertices will be denoted by $\PF(n)$.
For example:
\begin{eqnarray*}
\PF(1)&=&\{\tun\},\\
\PF(2)&=&\{\tun\tun, \tdeux\},\\
\PF(3)&=&\{\tun\tun\tun,\tun\tdeux,\tdeux\tun,\ttroisun,\ttroisdeux\},\\
\PF(4)&=&\left\{\tun\tun\tun\tun,\tun\tun\tdeux,\tun\tdeux\tun,\tdeux\tun\tun,\tun\ttroisun,\ttroisun\tun,
\tun\ttroisdeux,\ttroisdeux\tun,\tdeux\tdeux,\tquatreun,\tquatredeux,\tquatretrois,\tquatrequatre,\tquatrecinq\right\}.
\end{eqnarray*}
%\item Let $(X,\leq_r)$ and $(Y,\leq_r)$ be two totally ordered sets. We define a structure of plane poset on $X \sqcup Y$ in the following way:
%for all $x \in X$, $y\in Y$, $x \leq_h Y$. The Hasse graph of this poset is a complete bipartite graph.
\end{enumerate}

\begin{prop} \label{11}
Let $P\in\PP$. We define a relation $\leq$ on $P$ by:
$$(x\leq y) \mbox{ if } (x\leq_h y\mbox{ or } x \leq_r y).$$
Then $\leq$ is a total order on $P$.
\end{prop}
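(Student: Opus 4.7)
The plan is to verify the four axioms of a total order in turn. Reflexivity is immediate, since $x \leq_h x$ implies $x \leq x$. For totality and antisymmetry I would rely on the Remark after Definition \ref{10}: for any two distinct elements $x, y$ of a plane poset, exactly one of the four strict relations $x <_h y$, $y <_h x$, $x <_r y$, $y <_r x$ holds. Totality is then obvious, and antisymmetry follows because $x \leq y$ and $y \leq x$ with $x \neq y$ would produce two of these four mutually exclusive strict inequalities.

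The only genuine work is transitivity. Assuming $x \leq y$ and $y \leq z$, I split into four cases according to which of $\leq_h, \leq_r$ witnesses each of the two inequalities. The two homogeneous cases (both $\leq_h$, or both $\leq_r$) follow immediately from the transitivity of $\leq_h$ and $\leq_r$ respectively.

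For a mixed case, say $x \leq_h y$ and $y \leq_r z$, I would argue by contradiction: if $x \not\leq z$ then $x \neq z$ and neither $x \leq_h z$ nor $x \leq_r z$, so by the plane compatibility either $z <_h x$ or $z <_r x$. In the first subcase, $z <_h x \leq_h y$ gives $z <_h y$; combined with $y \leq_r z$ this forces $y = z$ (otherwise $y$ and $z$ would be simultaneously $\leq_h$- and $\leq_r$-comparable, against the plane condition), and then $x \leq_h z$, contradicting the assumption $x \not\leq z$. The second subcase $z <_r x$ is handled symmetrically, using $y \leq_r z <_r x$ to deduce $y \leq_r x$ and then forcing $x = y$, which again produces a contradiction with $x \leq_h y$. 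The remaining mixed case $x \leq_r y$, $y \leq_h z$ is obtained by interchanging the roles of $\leq_h$ and $\leq_r$ throughout the preceding argument.

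The hard part, or rather the main bookkeeping nuisance, lies in these mixed transitivity cases: one must carefully dispose of the degenerate possibilities where a weak inequality is actually an equality, since these are precisely the configurations in which the strict four-way plane-poset trichotomy cannot be invoked directly, and each such configuration must be seen to collapse the chain $x,y,z$ to a situation where $x \leq z$ holds trivially.
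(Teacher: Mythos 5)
Your proof follows essentially the same route as the paper's: reflexivity is trivial, totality and antisymmetry come from the fact that two distinct elements of a plane poset satisfy exactly one of the four relations $x\leq_h y$, $y\leq_h x$, $x\leq_r y$, $y\leq_r x$, and transitivity is settled by the same case split, with planarity used to collapse the degenerate configurations (the paper argues the mixed cases directly rather than by contradiction, but that difference is cosmetic). One local slip: in the mixed case $x\leq_h y$, $y\leq_r z$, subcase $z<_r x$, after you force $x=y$ the contradiction is not with $x\leq_h y$ --- that relation is perfectly consistent with $x=y$. The contradiction you want is with your standing assumption $x\not\leq z$: since $x=y\leq_r z$ you get $x\leq_r z$, hence $x\leq z$ (this mirrors what you correctly did in the subcase $z<_h x$, where $y=z$ gave $x\leq_h z$). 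With that one sentence repaired, the argument is complete and matches the paper's.
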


\begin{proof} For any $x\in P$, $x\leq x$ as $x\leq_h x$. Let us assume that $x\leq y$ and $y\leq z$. Then three cases are possible.
\begin{itemize}
\item ($x\leq_h y$ and $y\leq_h z$) or ($x\leq_r y$ and $y\leq_r z$). Then ($x\leq_h z$) or ($x \leq_r z$), so $x\leq z$.
\item $x\leq_h y$ and $y\leq_r z$. As $P$ is plane, then $x$ and $z$ are comparable for $\leq_h$ or $\leq_r$. 
If $x\leq_h z$ or $x\leq_r z$, then $x \leq z$. It remains two subcases.
\begin{itemize}
\item If $z\leq_r x$, then $y\leq_r z \leq_r x$, so $y \leq_r x$. Moreover, $x\leq_h y$, so, as $P$ is plane, $x=y$ and finally $x\leq z$.
\item If $z \leq_h x$, then $z \leq_h x \leq_h y$, so $z\leq_h y$. Moreover, $y\leq_r z$, so, as $P$ is plane, $y=z$ and finally $x \leq z$.
\end{itemize}
\item $x\leq_r y$ and $y\leq_h z$. Similar proof.
\end{itemize}
Let us assume that $x\leq y$ and $y\leq x$. Two cases are possible.
\begin{itemize}
\item ($x \leq_h y$ and $y\leq_h x$) or ($x \leq_h y$ and $y\leq_r x$). Then $x=y$.
\item ($x \leq_r y$ and $y\leq_h x$) or  ($x \leq_h y$ and $y\leq_r x$). As $P$ is plane, $x=y$.
\end{itemize}
So $\leq$ is an order on $P$. Moreover, by definition of a plane poset, if $x,y\in P$, then $x\leq y$ or $x\geq y$, so $\leq$ is total. \end{proof}\\

{\bf Notations.} \begin{enumerate}
\item Let $n \in \mathbb{N}$. We denote by $\wp_n$ the double poset with $n$ elements such that for all $x,y\in \wp_n$, the following assertions are equivalent:
\begin{enumerate}
\item $x$ and $y$ are comparable for $\leq_1$.
\item $x$ and $y$ are comparable for $\leq_2$.
\item $x=y$.
\end{enumerate}
\item $\tddeux{$1$}{$2$}$ is the double poset with two elements $x,y$ such that $x\leq_1 y$ and $x\leq_2 y$.
\item $\tddeux{$2$}{$1$}$ is the double poset with two elements $x,y$ such that $x\leq_1 y$ and $y\leq_2 x$.
\end{enumerate}

{\bf Remark.} Note that $\tddeux{$1$}{$2$}$ and $\tddeux{$2$}{$1$}$ are not plane posets;  $\wp_n$ is plane if, and only if, $n=0$ or $1$.

\begin{prop} 
Let $P$ be a double poset. Then $P$ is plane if, and only if, it does not contain any double subposet isomorphic to $\wp_2$, $\tddeux{$1$}{$2$}$
or $\tddeux{$2$}{$1$}$.
\end{prop}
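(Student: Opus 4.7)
The key observation is that the planeness condition is purely a \emph{pairwise} condition on distinct elements: for every $x\neq y$ in $P$, exactly one of the two statements ``$x$ and $y$ are $\leq_h$-comparable'' and ``$x$ and $y$ are $\leq_r$-comparable'' holds. Hence planeness of $P$ is equivalent to planeness of every two-element double subposet of $P$, and the strategy is simply to enumerate the two-element double posets and separate the plane ones from the non-plane ones.

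For the direct implication, I would assume $P$ is plane and consider any double subposet $Q$ of $P$ isomorphic to one of $\wp_2$, $\tddeux{$1$}{$2$}$, or $\tddeux{$2$}{$1$}$. In all three cases, the two elements of $Q$ are either both $\leq_1$- and $\leq_2$-incomparable (for $\wp_2$) or comparable for both orders (for $\tddeux{$1$}{$2$}$ and $\tddeux{$2$}{$1$}$), which directly contradicts the plane condition applied to those two elements.

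For the converse, I would argue by contraposition: suppose $P$ is not plane, so there exist distinct $x,y\in P$ for which the biconditional in Definition~\ref{10} fails. Two cases arise. First, $x$ and $y$ may be comparable for neither $\leq_1$ nor $\leq_2$, in which case the subposet $\{x,y\}$ is isomorphic to $\wp_2$. Second, $x$ and $y$ may be comparable for both orders; after possibly swapping $x$ and $y$ we may assume $x<_1 y$, and then either $x<_2 y$ (giving $\tddeux{$1$}{$2$}$) or $y<_2 x$ (giving $\tddeux{$2$}{$1$}$).

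There is no genuine obstacle here: the content is essentially the classification, up to isomorphism, of double posets on two elements into the plane ones ($\tdeux$ and $\tun\tun$) and the three forbidden ones. The only minor care needed is to treat the $\leq_1$-comparable/$\leq_2$-comparable case in both directions to cover both $\tddeux{$1$}{$2$}$ and $\tddeux{$2$}{$1$}$, which is handled by the dichotomy $x<_2 y$ versus $y<_2 x$.
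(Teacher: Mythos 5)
Your proposal is correct and follows essentially the same route as the paper: both reduce planeness to the condition on each pair of distinct elements and then check which two-element double subposets ($\wp_2$, $\tddeux{$1$}{$2$}$, $\tddeux{$2$}{$1$}$ versus the plane ones) can occur, your contrapositive phrasing of the converse being only a cosmetic variant of the paper's direct argument.
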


\begin{proof} $\Longrightarrow$. Let $x,y\in P$, $x\neq y$. If $x,y$ are comparable for $\leq_1$, then $\{x,y\} \neq \wp_2$; moreover,
$x,y$ are not comparable for $\leq_2$ as $P$ is plane, so $\{x,y\}\neq \tddeux{$1$}{$2$}$ and $\tddeux{$2$}{$1$}$. 
If $x,y$ are not comparable for $\leq_1$, then $\{x,y\}\neq \tddeux{$1$}{$2$}$ and $\tddeux{$2$}{$1$}$; moreover, $x,y$ are comparable for $\leq_2$,
so $\{x,y\} \neq \wp_2$.\\

$\Longleftarrow$. Let $x,y\in P$, $x\neq y$. As $\{x,y\} \neq \wp_2$, $x,y$ are comparable for $\leq_1$ or $\leq_2$.
As $\{x,y\} \neq \tddeux{$1$}{$2$}$ and $\tddeux{$2$}{$1$}$, they are not comparable for both of the partial order $\leq_1$ and $\leq_2$.
So $P$ is plane. \end{proof}

\subsection{Can every poset become a plane poset?}

We here give a family of counterexamples of posets $(X,\leq_h)$ such that there does not exist a partial order $\leq_r$ making $(X,\leq_h)$ a plane poset.

\begin{prop}
Let $N\geq 1$. The poset $X_N$ has $2N$ vertices $x_{\overline{1}},\ldots,x_{\overline{N}}$ and $y_{\overline{1}},\ldots,y_{\overline{N}}$
indexed by $\mathbb{Z}/N\mathbb{Z}$. Its partial order is given by $x_{\overline{i}}\leq_h y_{\overline{i}}$ and $x_{\overline{i}}\leq_h y_{\overline{i+1}}$
for all $i\in \mathbb{Z}/N\mathbb{Z}$. If $N \geq 3$, there is no plane poset of the form $(X_N,\leq_h,\leq_r)$.
\end{prop}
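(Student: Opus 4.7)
The plan is to argue by contradiction. Suppose some partial order $\leq_r$ makes $(X_N,\leq_h,\leq_r)$ a plane poset, and let $\leq$ be the total order on $X_N$ supplied by Proposition~\ref{11}. Since $x_{\overline{1}},\ldots,x_{\overline{N}}$ are pairwise $\leq_h$-incomparable they are totally ordered by $\leq$; exploiting the rotational symmetry $i\mapsto i+k$ of $X_N$, I may assume that $x_{\overline{1}}$ is the $\leq$-smallest of the $x_{\overline{i}}$'s.

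The key technical tool I would record first is the following transitivity principle: if $a\leq_h c$ and $a<b<c$ in the total order, then $b$ is $\leq_h$-comparable to $a$ or to $c$. Otherwise the plane condition would give $a<_r b$ and $b<_r c$, and then transitivity of $\leq_r$ would yield $a<_r c$, contradicting $a\leq_h c$ (a pair of distinct elements in a plane poset cannot be comparable for both orders).

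I would then apply this to the cover $x_{\overline{1}}\leq_h y_{\overline{1}}$: any intermediate element is $\leq_h$-comparable to one of the endpoints and hence lies in $\{y_{\overline{2}},x_{\overline{N}}\}$. For $N\geq 3$, $x_{\overline{2}}$ is not in this set, so $x_{\overline{2}}$ lies outside $(x_{\overline{1}},y_{\overline{1}})$; as $x_{\overline{1}}$ is the smallest $x$, this forces $x_{\overline{2}}>y_{\overline{1}}$, and combining with $x_{\overline{2}}\leq_h y_{\overline{2}}$ gives $y_{\overline{1}}<y_{\overline{2}}$. A symmetric application to the cover $x_{\overline{1}}\leq_h y_{\overline{2}}$ restricts intermediates to $\{y_{\overline{1}},x_{\overline{2}}\}$, which for $N\geq 3$ excludes $x_{\overline{N}}$, so $x_{\overline{N}}>y_{\overline{2}}$. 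But $x_{\overline{N}}\leq_h y_{\overline{1}}<y_{\overline{2}}$ yields $x_{\overline{N}}<y_{\overline{2}}$, a contradiction.

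The main obstacle is isolating the right transitivity principle; once that is in hand the only thing to verify is that the two covers chosen at $x_{\overline{1}}$ really do restrict the possible positions of $x_{\overline{2}}$ and $x_{\overline{N}}$ in incompatible ways, which is exactly where $N\geq 3$ becomes essential (so that $x_{\overline{2}}$ and $x_{\overline{N}}$ are distinct from each other and from the cover-neighbors of $x_{\overline{1}}$; note that the argument correctly fails for $N=2$, where $x_{\overline{2}}=x_{\overline{N}}$).
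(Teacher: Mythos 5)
Your argument is correct, and it is a genuinely different proof from the one in the paper. The paper normalizes the other symmetry (it assumes $x_{\overline{1}}\leq_r x_{\overline{2}}$) and then runs an induction all the way around the cycle, showing $x_{\overline{i}}\leq_r x_{\overline{i+1}}$ for every $i$ and getting a contradiction from the cyclic chain $x_{\overline{1}}\leq_r\cdots\leq_r x_{\overline{N}}\leq_r x_{\overline{1}}$ and antisymmetry; the hypothesis $N\geq 3$ enters there because $x_{\overline{i}}$ and $y_{\overline{i+2}}$ must be $\leq_h$-incomparable. You instead localize: using the total order $\leq$ of proposition \ref{11} and the rotation automorphism you place a $\leq$-minimal $x_{\overline{1}}$, prove the betweenness lemma (if $a\leq_h c$, any $b$ with $a<b<c$ must be $\leq_h$-comparable to $a$ or to $c$, since otherwise $a<_r b<_r c$ would make $a,c$ comparable for both orders), and apply it to the two covers $x_{\overline{1}}\leq_h y_{\overline{1}}$ and $x_{\overline{1}}\leq_h y_{\overline{2}}$ to force $x_{\overline{2}}>y_{\overline{1}}$ (hence $y_{\overline{1}}<y_{\overline{2}}$) and $x_{\overline{N}}>y_{\overline{2}}$, which clashes with $x_{\overline{N}}\leq_h y_{\overline{1}}<y_{\overline{2}}$. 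All the steps check out: the transported order under the rotation is again plane, so the WLOG is legitimate, and $N\geq 3$ is used exactly where you say, to guarantee $x_{\overline{2}}\neq x_{\overline{N}}$ and that neither coincides with the cover-neighbours, consistent with $X_1=\tdeux$ and $X_2=\pquatresept$ being plane. What the two approaches buy: the paper's induction needs no extremal choice and propagates a single local configuration around the cycle, while your version avoids induction entirely, replaces it by an extremality argument plus a reusable betweenness principle, and makes the failure for $N\leq 2$ especially transparent.
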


Here are the Hasse graphs of $X_3$ and $X_4$:
$$\xymatrix{\rond{y_{\overline{1}}}&\rond{y_{\overline{2}}}&\rond{y_{\overline{3}}}\\
\rond{x_{\overline{1}}}\ar@{-}[u] \ar@{-}[ru]&\rond{x_{\overline{2}}}\ar@{-}[u] \ar@{-}[ru]&\rond{x_{\overline{3}}}\ar@{-}[u] \ar@{-}[llu]}\hspace{1cm}
\xymatrix{\rond{y_{\overline{1}}}&\rond{y_{\overline{2}}}&\rond{y_{\overline{3}}}&\rond{y_{\overline{4}}}\\
\rond{x_{\overline{1}}}\ar@{-}[u] \ar@{-}[ru]&\rond{x_{\overline{2}}}\ar@{-}[u] \ar@{-}[ru]
&\rond{x_{\overline{3}}}\ar@{-}[u] \ar@{-}[ru]&\rond{x_{\overline{4}}}\ar@{-}[u] \ar@{-}[lllu]}$$

\begin{proof} Let us assume that there exists a plane poset $(X,\leq_h,\leq_r)$. As $x_{\overline{1}}$ and $x_{\overline{2}}$ are not comparable
for $\leq_h$, they are comparable for $\leq_r$. Let us assume for example that $x_{\overline{1}}\leq_r x_{\overline{2}}$ (the proof would be similar if
$x_{\overline{1}}\geq_r x_{\overline{2}}$). Let us prove by induction on $i$ that $x_{\overline{i}}\leq_r x_{\overline{i+1}}$. This is immediate for $i=1$. 
Let us assume that $x_{\overline{i}}\leq_r x_{\overline{i+1}}$. Then $x_{\overline{i}}\leq_r x_{\overline{i+1}}\leq_h y_{\overline{i+2}}$, 
so $x_{\overline{i}}\leq y_{\overline{i+2}}$. As $N\geq 3$, $x_{\overline{i}}$ and $y_{\overline{i+2}}$ are not comparable for $\leq_h$, 
so $x_{\overline{i}}\leq_r y_{\overline{i+2}}$. If $y_{\overline{i+1}} \geq_r y_{\overline{i+2}}$, then $x_{\overline{i}}\leq_r y_{\overline{i+2}} \leq_r y_{\overline{i+1}}$,
so $x_{\overline{i}}\leq_r y_{\overline{i+1}}$ and $x_{\overline{i}}\leq_h y_{\overline{i+1}}$: contradiction. So $y_{\overline{i+1}} \leq_r y_{\overline{i+2}}$.
If $y_{\overline{i+1}} \geq_r x_{\overline{i+2}}$, then $x_{\overline{i+2}}\leq_r y_{\overline{i+1}} \leq_r y_{\overline{i+2}}$,
so $x_{\overline{i+2}}\leq_r y_{\overline{i+2}}$ and $x_{\overline{i+2}}\leq_h y_{\overline{i+2}}$: contradiction. So $y_{\overline{i+1}} \leq_r x_{\overline{i+2}}$.
Finally, $x_{\overline{i+1}} \leq_h y_{\overline{i+1}} \leq_r x_{\overline{i+2}}$, so $x_{\overline{i+1}} \leq x_{\overline{i+2}}$. As they are not comparable for $\leq_h$,
$x_{\overline{i+1}} \leq_r x_{\overline{i+2}}$. We obtain $x_{\overline{1}} \leq_r \cdots \leq_r x_{\overline{N}} \leq_r x_{\overline{1}}$, 
so $x_{\overline{1}} = \cdots= x_{\overline{N}}$: absurd. \end{proof}\\

{\bf Remark.} Note that $X_1=\tdeux$ and $X_2=\pquatresept$ are plane posets.

\subsection{Products on plane posets}

Let $P,Q$ be two plane posets. It is not difficult to see that $P \prodg Q$ and $P\prodh Q$ are also plane posets.
Moreover, if $P$ is a plane poset, for any $I \subseteq P$, the double poset $I$ is also plane. As a consequence:

\begin{prop} 
Let $P$ be a double poset.
\begin{enumerate}
\item We write $P=P_1\prodg \ldots \prodg P_k$, where the $P_i$ are $1$-indecomposable. Then $P$ is plane if, and only if,
$P_1,\ldots,P_k$ are plane.
\item We write $P=P'_1\prodh \ldots \prodh P'_l$, where the $P'_j$ are $2$-indecomposable. Then $P$ is plane if, and only if,
$P'_1,\ldots,P'_l$ are plane.
\end{enumerate} \end{prop}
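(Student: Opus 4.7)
The plan is to derive the proposition immediately from the two observations stated just before it in the text: (i) the class of plane posets is stable under the products $\prodg$ and $\prodh$; (ii) any double subposet of a plane poset is again plane. Both directions of each biconditional then reduce to short induction arguments and no further work is really required.

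For the first assertion, suppose first that $P = P_1 \prodg \ldots \prodg P_k$ is plane. Each factor $P_i$ is a double subposet of $P$, so by (ii) it is itself plane. Conversely, if $P_1, \ldots, P_k$ are all plane, applying (i) to two factors at a time, and using the associativity of $\prodg$ established earlier, an induction on $k$ shows that $P = P_1 \prodg \ldots \prodg P_k$ is plane. The second assertion is proved identically, replacing $\prodg$ by $\prodh$ throughout; no additional structural input is needed because both facts (i) and (ii) are symmetric in the two products.

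The main conceptual point, and the only step that could be called an obstacle, is fact (i); but it is essentially by inspection. To make it explicit for $P \prodg Q$ one observes that for $x \in P$ and $y \in Q$ the pair $\{x,y\}$ is $\leq_2$-comparable and not $\leq_1$-comparable, which combined with the planarity of $P$ and $Q$ individually (so that all two-element subposets sitting entirely in $P$ or entirely in $Q$ are already good) shows that none of the forbidden two-element patterns of the preceding proposition appears in $P \prodg Q$. Once this is in hand, the proposition follows with no further calculation.
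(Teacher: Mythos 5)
Your proposal is correct and matches the paper's own route: the paper states the proposition as an immediate consequence of exactly your facts (i) and (ii), namely that $\prodg$ and $\prodh$ of plane posets are plane and that any subposet of a plane poset is plane, which you merely spell out (via the forbidden two-element patterns) in slightly more detail than the text does. No discrepancy to report.
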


We denote by $\h_{\PP}$ the subspace of $\h_{\DP}$ generated by plane posets. It is a sub-$2$-$As$ algebra of $\h_{\DP}$.
The following result is proved as theorem \ref{9}:

\begin{theo} 
As a $2$-$As$ algebra, $\h_{\PP}$ is freely generated by the set of $1,2$-indecomposable plane posets.
\end{theo}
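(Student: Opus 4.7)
The plan is to mimic the proof of theorem \ref{9} verbatim, checking at each step that everything restricts to plane posets.

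First, I would record the following consequences of the proposition immediately preceding the theorem: if $P$ is a plane poset, then its unique decomposition $P=P_1\prodg\cdots\prodg P_k$ into $1$-indecomposable factors has all $P_i$ plane, and likewise its unique decomposition $P=P'_1\prodh\cdots\prodh P'_l$ into $2$-indecomposable factors has all $P'_j$ plane. Thus propositions \ref{6} and \ref{8} restrict to $\h_{\PP}$: every nonempty plane poset is either $1,2$-indecomposable, or $1$-indecomposable but not $2$-indecomposable (and then canonically a $\prodh$-product of at least two $2$-indecomposable plane posets), or $2$-indecomposable but not $1$-indecomposable (and then canonically a $\prodg$-product of at least two $1$-indecomposable plane posets).

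Given a $2$-$As$ algebra $(A,.,*)$ and a prescribed family $(a_P)$ indexed by $1,2$-indecomposable plane posets, I would define $\phi:\h_{\PP}\longrightarrow A$ by induction on $n=\mathrm{Card}(P)$: $\phi(1)=1_A$; if $P$ is $1,2$-indecomposable, set $\phi(P)=a_P$; if $P$ is $1$-indecomposable but not $2$-indecomposable, write $P=P'_1\prodh\cdots\prodh P'_l$ and set $\phi(P)=\phi(P'_1)*\cdots*\phi(P'_l)$; and symmetrically in the remaining case using $.\,$. The trichotomy above makes this well-defined.

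To check that $\phi$ is a morphism of $2$-$As$ algebras, I would verify $\phi(P\prodg Q)=\phi(P).\phi(Q)$ and $\phi(P\prodh Q)=\phi(P)*\phi(Q)$ by concatenating the canonical $\prodg$- (resp. $\prodh$-) decompositions of $P$ and $Q$, exactly as in the proof of theorem \ref{9}; associativity of $.$ and $*$ in $A$ then gives the result. Uniqueness of $\phi$ is immediate, since the recursive clauses are forced by the morphism property and by the condition $\phi(P)=a_P$ on $1,2$-indecomposable plane posets.

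There is no real obstacle: the only point requiring verification is the stability of the two canonical decompositions within $\PP$, which is precisely what the preceding proposition provides, and once this is in place the proof of theorem \ref{9} transfers word for word.
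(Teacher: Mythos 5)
Your proposal is correct and follows exactly the paper's route: the paper proves this theorem by transferring the proof of theorem \ref{9}, relying (as you do) on the preceding proposition to ensure that the canonical $\prodg$- and $\prodh$-decompositions of a plane poset stay within $\PP$. Nothing is missing.
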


\subsection{Another description of indecomposable plane posets}

\begin{defi}\textnormal{
Let $P=(P,\preceq)$ be a poset. 
\begin{enumerate}
\item We define a relation $\R_P$ on $P$ in the following way: for all $x,y \in P$, $x \R_P y$ if there exists 
$x=x_0,x_1,\cdots,x_n=y$ elements of $P$, such that $x_i$ and $x_{i+1}$ are comparable for $\preceq$ for all $i\in \{0,\cdots,n-1\}$. 
This relation is clearly an equivalence.
\item The equivalence classes for $\R_P$ of $P$ will be called \emph{ connected components} of $P$.
If $P$ has only one connected component, it will be said \emph{ connected}. By convention, $\emptyset$ will not be considered as connected.
\end{enumerate}}\end{defi}

{\bf Remark.} The connected components of $P$ are the connected components of the Hasse graph of $(P,\preceq)$. \\

In the case of a double poset $P=(P,\leq_h,\leq_r)$, we can consider the two posets $(P,\leq_h)$ and $(P,\leq_r)$.

\begin{defi}\textnormal{Let $P=(P,\leq_h,\leq_r)$ be a double poset.
\begin{enumerate}
\item The connected components of $(P,\leq_h)$ will be called \emph{ $h$-connected components} of $P$.
If $P$ has only one $h$-connected component, we shall say that $P$ is \emph{ $h$-connected}.
\item The connected components of $(P,\leq_r)$ will be called \emph{ $r$-connected components} of $P$.
If $P$ has only one $r$-connected component, we shall say that $P$ is \emph{ $r$-connected}.
\item We shall say that $P$ is \emph{ biconnected} if it both $h$- and $r$-connected.
\end{enumerate}}\end{defi}

For example, $\tun$, $\pquatrecinq$ and $\pquatresix$ are biconnected. These are the only biconnected plane posets of degree $\leq 4$.

\begin{lemma} \label{19}
Let $P \in \PP$, and let $P_1,\cdots,P_k$ its $h$-connected components. For all $i\in \{1,\cdots,k\}$, let us fix an element $x_i\in P_i$.
If $i\neq j$, $x_i$ and $x_j$ are not in the same $h$-connected component of $P$, so are not comparable for $\leq_h$,
so are comparable for $\leq_r$. We suppose that the $P_i$'s are indexed such that $x_1\leq_r \cdots \leq_r x_k$.
Then $P=P_1\prodg \cdots \prodg P_k$.
\end{lemma}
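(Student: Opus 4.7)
The strategy is to prove that for any $y \in P_i$ and $z \in P_j$ with $i < j$, we have $y \leq_r z$ and $y, z$ are not $\leq_h$-comparable. Once this holds, the decomposition $P = P_1 \prodg \cdots \prodg P_k$ follows immediately from the definition of $\prodg$, since each $P_i$ is automatically a double subposet of $P$ and the $h$-components partition $P$ as a set.

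The key intermediate step, which I will call the \emph{propagation claim}, is: if $u, v \in P$ lie in the same $h$-connected component and $w \in P$ lies in a different one, then $u \leq_r w$ implies $v \leq_r w$ (and similarly for the reversed inequality, and with the roles of $u$ and $w$ swapped). By the definition of $h$-connected component, there is a sequence $u = u_0, u_1, \ldots, u_n = v$ with consecutive terms $\leq_h$-comparable, so by induction on $n$ it suffices to treat the case where $u$ and $v$ are themselves $\leq_h$-comparable. Assuming $u \leq_r w$, we note that $v$ and $w$ lie in different $h$-components, hence are not $\leq_h$-comparable, hence by the plane property are $\leq_r$-comparable. Suppose for contradiction that $w \leq_r v$. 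If $u \leq_h v$, then $u \leq_r w \leq_r v$ gives $u \leq_r v$ together with $u \leq_h v$, forcing $u = v$ by the plane property and hence $v \leq_r w$. If instead $v \leq_h u$, then by Proposition \ref{11} the chain $w \leq_r v \leq_h u$ gives $w \leq u$ in the total order $\leq$, while $u \leq_r w$ gives $u \leq w$; hence $u = w$, contradicting that they lie in distinct $h$-components.

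Given the propagation claim, the main statement is straightforward: for $i < j$ and any $y \in P_i$, $z \in P_j$, start from $x_i \leq_r x_j$; applying the claim to $u = x_i$, $v = y$, $w = x_j$ yields $y \leq_r x_j$, and then applying the symmetric version (propagating in the second argument) to $u = x_j$, $v = z$, $w = y$ yields $y \leq_r z$. Since $y$ and $z$ belong to distinct $h$-components, they are not $\leq_h$-comparable. This is precisely the compatibility required for $P = P_1 \prodg \cdots \prodg P_k$.

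The main obstacle is the propagation claim, whose proof relies on the delicate interplay of the plane condition and the total-order Proposition \ref{11}; once it is established, the case analysis for the lemma itself reduces to routine bookkeeping.
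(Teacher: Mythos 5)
Your proof is correct and follows essentially the same route as the paper: reduce to showing $y\leq_r z$ (hence $\leq_h$-incomparability) for $y\in P_i$, $z\in P_j$ with $i<j$, and obtain it by propagating the relation $x_i\leq_r x_j$ step by step along $\leq_h$-chains within each component, using the plane condition to rule out the wrong $\leq_r$-direction — the paper phrases this as two successive inductions along the chains, while you package it as a single "propagation claim" applied once in each argument. The only cosmetic difference is that in your second case you invoke the total order of Proposition \ref{11}, where the paper gets the contradiction directly from two chain neighbours being both $\leq_h$- and $\leq_r$-comparable; both are valid.
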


\begin{proof} We have to show that if $1\leq i<j \leq k$, if $y_i \in P_i$ and $y_j \in P_j$, then $y_i \leq_r y_j$ 
and $y_i$ and $y_j$ are not comparable for $\leq_h$. As $P$ is a plane poset, the first assertion implies the second one.
As $y_i \R_h x_i$ and $y_j \R_h x_j$ there exists elements of $P$ such that:
\begin{itemize}
\item $s_0=x_i,\cdots, s_p=y_i$, $s_l$ and $s_{l+1}$ are comparable for all $l\in \{0,\cdots,p-1\}$.
\item $t_0=x_j,\cdots, t_q=y_j$, $t_l$ and $t_{l+1}$ are comparable for all $l\in \{0,\cdots,q-1\}$.
\end{itemize}
Note that all the $s_l$'s belong to $P_i$ and all the $t_l$'s belong to $P_j$, by definition of the relation $\R_h$. We can suppose that the $s_l$'s 
and the $t_l$'s are all distinct. Let us first prove that $s_l\leq_r t_0$ by induction on $l$. For $l=0$, this is the hypothesis of the lemma.
Let us suppose that $s_{l-1} \leq_r t_0$. As $s_l$ and $t_0$ are not in the same $h$-connected component of $P$, they are not comparable for $\leq_h$, 
so they are comparable for $\leq_r$. Let us suppose that $s_l \geq_r  t_0$. Then $s_l \geq_r t_0  \geq_r s_{l-1}$, so $s_l$ and $s_{l-1}$ are comparable 
for $\leq_r$: contradiction, they are distinct elements of $P$ and are comparable for $\leq_h$ in the plane poset $P$. So $s_l \leq_r t_0$.
As a conclusion, $y_i \leq_r t_0$. Similarly, an induction proves that $y_i \leq_r t_l$ for all $l$, so $y_i\leq_r y_j$. \end{proof}

\begin{prop} \label{20} 
Let $P$ be a double poset.
\begin{enumerate}
\item \begin{enumerate}
\item If $P$ is $h$-connected, then it is $1$-irreducible.
\item If $P$ is plane and $1$-irreducible, then it is $h$-connected.
\end{enumerate}
\item \begin{enumerate}
\item If $P$ is $r$-connected, then it is $2$-irreducible.
\item If $P$ is plane and $2$-irreducible, then it is $r$-connected.
\end{enumerate}
\end{enumerate}\end{prop}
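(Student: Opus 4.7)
The plan is to prove the four assertions directly, exploiting the fact that parts 2(a) and 2(b) reduce to 1(a) and 1(b) via the involution $\iota$ that exchanges the two partial orders of a double poset: $\iota$ preserves planarity, swaps $h$- with $r$-connectedness, and swaps $1$- with $2$-indecomposability. So only 1(a) and 1(b) really need argument.

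For 1(a) I would argue contrapositively. Assume $P$ is not $1$-indecomposable; then by the remark after Definition \ref{5} there exist nonempty proper subsets with $P = I \prodg J$, so by the definition of $\prodg$ no element of $I$ is $\leq_1$-comparable (that is, $\leq_h$-comparable) to any element of $J$. I then claim $I$ is a union of $h$-connected components: if $x \in I$ and $x \mathcal{R}_h z$, any chain $x = z_0, z_1, \ldots, z_n = z$ of $\leq_h$-comparable pairs must stay inside $I$, since a step from $I$ to $J$ would yield a $\leq_h$-comparable pair across the partition. Hence $P$ has at least two $h$-connected components and is not $h$-connected. Notice this direction does not need planarity.

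For 1(b) I would invoke Lemma \ref{19} directly. Assume $P$ is plane and $1$-indecomposable, and let $P_1, \ldots, P_k$ denote its $h$-connected components, indexed as in Lemma \ref{19}, so that $P = P_1 \prodg \cdots \prodg P_k$. If $k \geq 2$, then $P = P_1 \prodg (P_2 \prodg \cdots \prodg P_k)$ is a nontrivial $\prodg$-factorisation, contradicting $1$-indecomposability. Hence $k = 1$ and $P$ is $h$-connected. (The empty poset is excluded by the convention that $\emptyset$ is not connected.)

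The only point requiring care is the symmetry step deriving 2(a) and 2(b) from 1(a) and 1(b): this uses the analogue of Lemma \ref{19} for $r$-connected components and the product $\prodh$. But since the plane condition is symmetric in $\leq_h$ and $\leq_r$ and the proof of Lemma \ref{19} uses only that the two orders are mutually complementary, the $r$-analogue is automatic. I expect no genuine obstacle here; the substantive content is already packaged in Lemma \ref{19}, and 1(a) is an instructive side-remark showing that $h$-connectedness implies $1$-indecomposability for arbitrary double posets, not just plane ones.
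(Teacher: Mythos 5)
Your proposal is correct and follows essentially the same route as the paper: the chain argument for 1(a) (stated contrapositively rather than by contradiction, which changes nothing), Lemma \ref{19} for 1(b), and part 2 obtained by permuting the two partial orders, which is exactly the paper's symmetry step (phrased there without naming the involution $\iota$).
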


\begin{proof} We only prove the first point. The second point is proved similarly, permuting the two partial orders of $P$.\\

1. $(a)$ Let us assume that $P$ is $h$-connected and not $1$-irreducible. There exists $\emptyset \subsetneq Q,R \subsetneq P$,
such that $P=Q\prodg R$. Let us choose $x \in Q$  and $y\in R$. As $P$ is $h$-connected, there exists 
$x_1,\ldots,x_k \in P$, such that $x_1=x$, $x_k=y$, and $x_i,x_{i+1}$ are $\leq_h$-comparable for all $1\leq i\leq k-1$.
As $x_1 \in Q$ and $x_2,x_1$ are $\leq_h$-comparable, necessarily $x_2\in Q$. Repeating this argument, we show that $x_3,\ldots,x_k\in Q$,
so $y\in Q$; contradiction, $Q$ and $R$ are disjoint. \\

1. $(b)$ Let us assume that $P$ is not $h$-connected. By lemma \ref{19}, we can write $P=P_1\prodg \cdots \prodg P_k$ with $k\geq 2$,
so $P$ is not $1$-irreducible. \end{proof}\\

{\bf Remark}. So a plane poset is $1$-irreducible if, and only if, it is $h$-connected. This result is false for double posets that are not plane.
For example, $\tddeux{$1$}{$3$} \tdun{$2$}$ is $1$-irreducible but not $h$-connected.
We used here the double poset $\tddeux{$1$}{$3$} \tdun{$2$}$, which has three elements $x,y,z$ such that:
\begin{itemize}
\item $x\leq_2 y\leq_2 z$.
\item $x \leq_1 z$, $x,y$ and $y,z$ are not comparable for $\leq_1$.
\end{itemize}

%Similarly, we define a relation $\R_r$ on $P$ in the following way: for all $x,y \in P$, $x \R_r y$ if there exists $x=x_0,x_1,\cdots,x_n=y$ elements of $P$, 
%such that $x_i$ and $x_{i+1}$ are comparable for $\leq_r$ for all $i\in \{0,\cdots,n-1\}$. This is an equivalence. 
%The equivalence classes for $\R_r$ of $P$ will be called \emph{ $r$-connected components} of $P$. If $P$ has only one $r$-connected component, 
%it will be said \emph{  $r$-connected}. By convention, $\emptyset$ will not be considered as $r$-connected. Permuting $\leq_h$ and $\leq_r$:

%\begin{prop}
%Let $P$ be a double poset.
%\begin{enumerate}
%\item If $P$ is $r$-connected, then it is $2$-irreducible.
%\item If $P$ is plane and $2$-irreducible, then it is $r$-connected.
%\end{enumerate}\end{prop}

%\begin{defi} \textnormal{
%Let $P$ be a double poset. We shall say that $P$ is \emph{ biconnected} if it is both $h$- and $r$-connected.
%}\end{defi}

%If $P$ is plane, then it is biconnected if, and only if, it is $1,2$-indecomposable. 

\subsection{WN posets}

We define a subset of $\PP$ in the following way:

\begin{defi}\label{21}\textnormal{
Let $P$ be a double poset. We shall say that $P$ is \emph{ WN} ("without N") if it is plane and does not have any subposet isomorphic to $\pquatrecinq$ nor $\pquatresix$.
The set of isoclasses of WN posets will be denoted by $\WNP$. 
For all $n \in \mathbb{N}$, the set of isoclasses of WN posets of cardinality $n$ will be denoted by $\WNP(n)$.
}\end{defi}

\begin{lemma}
\begin{enumerate}
\item Let $P\in \DP$. The following conditions are equivalent.
\begin{enumerate}
\item $P$ is WN.
\item The $h$-connected components of $P$ are WN.
\item The $r$-connected components of $P$ are WN.
\end{enumerate}
\item Let $P_1,P_2\in \DP$. The following conditions are equivalent:
\begin{enumerate}
\item $P_1$ and $P_2$ are WN.
\item $P_1\prodg P_2$ is WN.
\item $P_1\prodh P_2$ is WN.
\end{enumerate}
\end{enumerate} \end{lemma}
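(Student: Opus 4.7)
The plan hinges on a fact already noted just before Lemma \ref{19}: both forbidden N-posets $\pquatrecinq$ and $\pquatresix$ are biconnected, i.e., both $h$-connected and $r$-connected. Because WN is defined by two hereditary conditions (being plane and avoiding the two N-posets as subposets), heredity under double subposets immediately yields the easy implications (a)$\Rightarrow$(b) and (a)$\Rightarrow$(c) of part 1, together with (b)$\Rightarrow$(a) and (c)$\Rightarrow$(a) of part 2; in each case the smaller objects in question ($h$- or $r$-components, or the factors $P_1,P_2$ of $P_1\prodg P_2$ or $P_1\prodh P_2$) embed as double subposets of the original.

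For part 1, (b)$\Rightarrow$(a), I let $P_1,\ldots,P_k$ be the $h$-connected components of $P$, each assumed WN. Planarity of each $P_i$ together with the fact that distinct $h$-components are $\leq_h$-incomparable allows me to invoke Lemma \ref{19} and write $P = P_{\sigma(1)}\prodg\cdots\prodg P_{\sigma(k)}$ for a suitable ordering. Now any subposet of $P$ isomorphic to $\pquatrecinq$ or $\pquatresix$ is $h$-connected, so its four vertices must be confined to a single $P_i$; that $P_i$ would then fail to be WN, contradicting the hypothesis. Hence $P$ is WN. The implication (c)$\Rightarrow$(a) is the mirror argument, using $r$-connectedness of the forbidden posets and the $r$-component decomposition.

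For part 2, (a)$\Rightarrow$(b), I start from WN posets $P_1, P_2$: their product $P_1\prodg P_2$ is plane, since the product of two plane posets is plane, a fact already used in the paper. The only cross-factor relations in $P_1\prodg P_2$ are of type $\leq_r$, so no element of $P_1$ is $\leq_h$-comparable with any element of $P_2$. Hence any $h$-connected subposet of $P_1\prodg P_2$ must lie entirely in $P_1$ or entirely in $P_2$. Applied to a hypothetical copy of $\pquatrecinq$ or $\pquatresix$, this contradicts the WN property of the factor containing it. The proof of (a)$\Rightarrow$(c) is symmetric: in $P_1\prodh P_2$ the new cross-factor relations are all of type $\leq_h$, so $r$-connected subposets are trapped in a single factor.

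The only real content is this confinement of N-subposets to a single component, which rests on biconnectedness of $\pquatrecinq$ and $\pquatresix$; this is immediate from their Hasse diagrams and already asserted in the text, so no step is expected to be a serious obstacle. The mildest care needed is in checking that the cross-component behaviour in part 1 is consistent enough to invoke Lemma \ref{19}; everything else is heredity plus the confinement argument.
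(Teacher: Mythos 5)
Your argument is essentially the paper's: the whole content is that $\pquatrecinq$ and $\pquatresix$ are both $h$- and $r$-connected, so any copy of them is confined to a single $h$-connected (resp.\ $r$-connected) component or to a single factor of $\prodg$ or $\prodh$, and the remaining directions are heredity of the WN condition under double subposets. The paper phrases part 2 via the observation that the $h$-connected components of $P_1\prodg P_2$ are exactly those of $P_1$ and $P_2$ (and dually for $\prodh$ and $r$-components), which is the same confinement argument you give.

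One caveat concerns the planarity half of the WN condition in part 1, direction (b)$\Rightarrow$(a). You invoke Lemma \ref{19} to write $P$ as the $\prodg$-product of its $h$-connected components, but that lemma assumes $P$ itself is plane; planarity of the individual components does not yield this (for $\wp_2$ the two $h$-components are single points, hence WN, while $\wp_2$ is not plane), so for a completely general double poset this implication is problematic as stated. The paper's own proof glosses over the same point -- it only discusses containment of the forbidden posets -- so the lemma should really be read with $P$ plane (which is how it is used later in the text); under that reading your proof and the paper's coincide.
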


\begin{proof} The first point comes from the fact that $\pquatrecinq$ and $\pquatresix$ are $h$-connected and $r$-connected.
So $P$ contains  $\pquatrecinq$ or $\pquatresix$ if, and only if, one of its $h$-connected components contains $\pquatrecinq$ or $\pquatresix$,
if, and only if, one of its $r$-connected components contains $\pquatrecinq$ or $\pquatresix$.
The second point comes from the fact that the $h$-connected components of $P_1\prodg P_2$ are the $h$-connected components of $P_1$ and $P_2$
and the $r$-connected components of $P_1\prodh P_2$ are the $r$-connected components of $P_1$ and $P_2$. \end{proof}\\

{\bf Remark.} As a consequence, the subspace $\h_{\WNP}$ of $\h_{\PP}$ generated by $\WNP$ is a $2$-$As$ subalgebra.

\begin{prop} \label{23} \begin{enumerate}
\item Let $P\in \PP$. Then $P$ is $h$-connected or $P$ is $r$-connected.
\item Let $P\in \WNP$. If $P$ is biconnected, then $P=\tun$.
\end{enumerate}
\end{prop}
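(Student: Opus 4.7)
Part (1) is immediate from Propositions \ref{8} and \ref{20}: any nonempty double poset is $1$-indecomposable or $2$-indecomposable by Proposition \ref{8}; and for a plane poset, $1$-indecomposability (resp.\ $2$-indecomposability) is equivalent to $h$-connectedness (resp.\ $r$-connectedness) by Proposition \ref{20}. Hence every nonempty plane poset is $h$-connected or $r$-connected.

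For part (2), I argue by contradiction: assume $P \in \WNP$ is biconnected with $|P| \geq 2$, and exhibit an induced subposet isomorphic to $\pquatrecinq$ or $\pquatresix$, which will contradict $P \in \WNP$. Using the total order from Proposition \ref{11}, let $m$ be the minimum of $P$ and set $A = \{x \in P : m <_h x\}$ and $B = \{x \in P : m <_r x\}$; by the plane property these partition $P \setminus \{m\}$. Biconnectedness forces both sets nonempty: if $A = \emptyset$, then $m$ is $\leq_h$-incomparable with every other element, so $\{m\}$ is an $h$-connected component, contradicting $h$-connectedness since $|P| \geq 2$; the argument for $B \neq \emptyset$ is symmetric. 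For a pair $(a,b) \in A \times B$, transitivity of $\leq_h$ and $\leq_r$ rules out $a <_h b$ (which would force $m <_h b$, i.e., $b \in A$) and $b <_r a$ (which would force $m <_r a$, i.e., $a \in B$), leaving only $b <_h a$ or $a <_r b$.

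To locate the N, I take $b = \min_{\leq} B$ and, when it exists, let $a$ be the immediate $\leq$-predecessor of $b$; then $a \in A$ and the above dichotomy combined with $a <_\leq b$ forces $a <_r b$. I next invoke $h$-connectedness at $b$: since $|P|\geq 2$ and $P$ is $h$-connected, $b$ is $\leq_h$-comparable to some element $y \neq b$. A case analysis on whether $y \in A$ or $y \in B$, and on whether $y <_h b$ or $b <_h y$, identifies which of the elements $m, a, b, y$ play the roles of the four corners of $\pquatrecinq$ or $\pquatresix$. For example, if $y \in A$ with $b <_h y$, then $m <_h a$, $m <_h y$, $b <_h y$, $m <_r b$, $a <_r b$, together with $a <_r y$ (forced by the same dichotomy applied to $(a,y)$ in the right configuration), realize $\{m,a,b,y\}$ as $\pquatrecinq$.

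The main obstacle is that the case analysis is not uniformly clean: in some configurations the four elements produced form a subposet like $\pquatredeux$ or $\tdeux\tdeux$ rather than an N, and one must iterate—by chasing a shortest $r$-path from $m$ to some $A$-element and using its minimality to forbid shortcuts, or by re-selecting the element $y$ among the $h$-neighbors of $b$—until an induced $\pquatrecinq$ or $\pquatresix$ emerges. An equivalent route is strong induction on $|P|$, showing that every $P \in \WNP$ with $|P|\geq 2$ decomposes as $P_1 \prodg P_2$ or $P_1 \prodh P_2$ and is therefore not biconnected by Proposition \ref{20}; this is essentially the classical Valdes--Tarjan--Lawler characterization of series--parallel posets in the plane-poset setting.
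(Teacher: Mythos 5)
Part (1) of your proposal is exactly the paper's argument (Propositions \ref{8} and \ref{20}) and is fine. The problem is part (2): the entire content of the statement is the extraction of a forbidden configuration from a biconnected WN poset, and your case analysis stops exactly where the difficulty begins. You concede that for your chosen $m,a,b,y$ the induced subposet may be $\pquatredeux$ or $\tdeux\tdeux$ rather than $\pquatrecinq$ or $\pquatresix$, and you propose to ``iterate \ldots until an induced N emerges'' without defining the iteration or proving it terminates in an N; that is the proof, and it is missing. There are also local slips: the immediate $\leq$-predecessor of $b=\min_{\leq}B$ need not lie in $A$ (it can be $m$ itself), and in your sample case $a$ and $y$ both lie in $A$, so the $A\times B$ dichotomy you derived does not apply to the pair $(a,y)$; what one can actually force there is ``$a<_h y$ or $a<_r y$'', and the branch $a<_h y$ is precisely one of the configurations you leave unresolved. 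The fallback through the Valdes--Tarjan--Lawler theorem does not close the gap either: ``every WN poset of size $\geq 2$ factors under $\prodg$ or $\prodh$'' is (via Propositions \ref{8} and \ref{20}) a restatement of the proposition, and invoking the classical characterization outsources the argument; moreover it silently uses that forbidding the plane subposets $\pquatrecinq$, $\pquatresix$ is equivalent to N-freeness of $(P,\leq_h)$, and needs Lemma \ref{19} to convert $h$-disconnection into a $\prodg$-factorization.

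For comparison, the paper closes part (2) with a minimal-counterexample argument that you may want to adapt: take $P$ biconnected WN with $n=|P|\geq 2$ minimal, check directly that $n\geq 4$, identify $P=\{1<\cdots<n\}$ via the total order of Proposition \ref{11} and delete the maximum $n$. By minimality $Q=P\setminus\{n\}$ fails $h$- or $r$-connectivity; say $Q=Q_1\prodg\cdots\prodg Q_k$ with $k\geq 2$ its $h$-components (Lemma \ref{19}). Then $h$-connectivity of $P$ yields $x_i\leq_h n$ in each $Q_i$, and $r$-connectivity yields some $x\in Q$ with $x\leq_r n$; according to whether $x$ lies in $Q_1\cup\cdots\cup Q_{k-1}$ or in $Q_k$, one either exhibits four elements forming $\pquatrecinq$ or produces an element comparable with $n$ for both orders, contradicting planarity. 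This is a finite, fully written case split, which is what your sketch still needs to supply.
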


\begin{proof} 1. By proposition \ref{8}, $P$ is $1$-indecomposable or $2$-indecomposable, so
is $h$-connected or $r$-connected.\\
 
 2. Let $P$ be a WN double poset, of cardinal $n \geq 2$, $h$-connected and $r$-connected.
We choose $P$ such that $n$ is minimal. A direct consideration of double posets of cardinal $2$ and $3$ proves that $n\geq 4$.
Up to an isomorphism, we suppose that $P=\{1,\cdots,n\}$ as a totally ordered set.
We consider $Q=P-\{n\}$. by minimality of $n$, $Q$ is not $h$-connected or not $r$-connected.
For example, let us assume that $Q$ is not $h$-connected (the proof is similar in the other case, permuting $\leq_h$ and $\leq_r$).
We denote by $Q_1,\cdots,Q_k$ its $h$-connected components, such that $Q=Q_1\prodg\cdots \prodg Q_k$. Then $k\geq 2$.
As $P$ is $h$-connected, for all $i\in \{1,\cdots,k\}$, there exists $x_i \in Q_i$, such that $x_i \leq_h n$.
Moreover, $P$ is $r$-connected, so there exists $x\in Q$, $x\leq_r n$. Two cases are possible.
\begin{itemize}
\item If $x \in Q_1\cup \cdots \cup Q_{k-1}$, up to a change of $x$, as $P$ is $h$-connected, there exists $y \in Q_1\cup \cdots \cup Q_{k-1}$, such that
$y\leq_h x$ and $y\leq_h n$. Then the double subposet of $P$ formed by $x$, $y$, $x_k$ and $n$ is isomorphic to $\pquatrecinq$.
So $P$ is not WN: contradiction.
\item If $x\in Q_k$, up to a change of $x$, we can suppose that there exists $y\in Q_k$, such that 
$y\leq_h x$ and $y\leq_h n$. Then $x_1\leq_r x \leq_r n$, so $x_1\leq_r n$ and $x_1\leq_h n$: impossible, as $P$ is a double poset.
\end{itemize}
In both cases, this is a contradiction, so a WN double poset which is both $h$- and $r$-connected is equal to $\tun$. \end{proof}\\

Hence, propositions \ref{8} and \ref{20}  give:

\begin{prop} \label{24}
Let $P$ be a WN poset, not equal to $1$. One, and only one, of the following conditions holds:
\begin{itemize}
\item $P$ is equal to $\tun$.
\item $P$ is $1$-indecomposable and not $2$-indecomposable. Equivalently, $P$ is $h$-connected and not $r$-connected.
\item $P$ is $2$-indecomposable and not $1$-indecomposable. Equivalently, $P$ is $r$-connected and not $h$-connected.
\end{itemize}\end{prop}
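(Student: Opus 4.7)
The plan is to combine the three earlier structural results in a straightforward way, since a WN poset is in particular plane, so all the equivalences between indecomposability and connectivity from Proposition \ref{20} apply without extra work.

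First I apply Proposition \ref{8} to $P \neq 1$: one and only one of the three conditions holds, namely $P$ is $1,2$-indecomposable, or $1$-indecomposable but not $2$-indecomposable, or $2$-indecomposable but not $1$-indecomposable. Since $P$ is WN, it is plane, so by Proposition \ref{20} being $1$-indecomposable is equivalent to being $h$-connected, and being $2$-indecomposable is equivalent to being $r$-connected. This immediately rewrites the three cases of Proposition \ref{8} in terms of the connectivity properties $h$-connected and $r$-connected, giving precisely the equivalent formulations announced in the second and third bullets.

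It remains to identify the first case (biconnected WN poset) with $P = \tun$. This is exactly the content of Proposition \ref{23}, part 2: any WN poset that is simultaneously $h$- and $r$-connected must equal $\tun$. Conversely, $\tun$ is trivially biconnected, so the first bullet of Proposition \ref{24} is the same as the $1,2$-indecomposable case of Proposition \ref{8} when restricted to WN posets.

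There is essentially no obstacle: the proposition is a direct combination of Propositions \ref{8}, \ref{20}, and \ref{23}. The only thing to be careful about is verifying that the exclusivity of the three cases in Proposition \ref{8} carries through the translation, but this is immediate since the equivalences $1$-indecomposable $\Leftrightarrow$ $h$-connected and $2$-indecomposable $\Leftrightarrow$ $r$-connected of Proposition \ref{20} are genuine equivalences (not merely implications) in the plane setting.
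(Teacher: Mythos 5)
Your proof is correct and is essentially the paper's own argument: the paper derives Proposition \ref{24} precisely by combining the trichotomy of Proposition \ref{8} with the equivalences of Proposition \ref{20} (valid since WN posets are plane) and identifying the biconnected, i.e.\ $1,2$-indecomposable, case with $\tun$ via Proposition \ref{23}. Nothing further is needed.
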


We prove in the same way as theorem \ref{9} the following result:

\begin{theo} \label{25}
As a $2$-$As$ algebra, $\h_{\WNP}$ is freely generated by $\tun$.
\end{theo}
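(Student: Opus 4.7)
The plan is to mimic the proof of theorem \ref{9}, taking advantage of the crucial collapse guaranteed by proposition \ref{23}: among WN posets, the only $1,2$-indecomposable one is $\tun$. Indeed, by proposition \ref{20}, a plane poset is $1$-indecomposable iff it is $h$-connected and $2$-indecomposable iff it is $r$-connected, so $1,2$-indecomposable WN posets are exactly the biconnected WN posets, and by proposition \ref{23} this set is $\{\tun\}$. Hence specializing theorem \ref{9} from $\h_{\DP}$ to $\h_{\WNP}$ should reduce the free generating set to the single element $\tun$.

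First I would verify that $\h_{\WNP}$ is stable under the relevant decompositions, which is precisely the content of the lemma preceding proposition \ref{23}: the unique $\prodg$-factorization (resp.\ $\prodh$-factorization) of a WN poset into $1$-indecomposables (resp.\ $2$-indecomposables) given by proposition \ref{6} has WN factors. This ensures that the recursion below stays inside $\WNP$.

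Next, given any $2$-$As$ algebra $(A,\cdot,*)$ and any element $a \in A$, I would define $\phi : \h_{\WNP} \longrightarrow A$ by induction on the cardinality of $P \in \WNP$:
\begin{itemize}
\item $\phi(1) = 1$ and $\phi(\tun) = a$;
\item if $P \neq 1, \tun$, proposition \ref{24} puts $P$ in exactly one of two cases. If $P$ is $1$-indecomposable and not $2$-indecomposable, write $P = P'_1 \prodh \cdots \prodh P'_l$ with $l \geq 2$ and $P'_i$ being $2$-indecomposable (hence WN, and of smaller cardinality), and set $\phi(P) = \phi(P'_1) * \cdots * \phi(P'_l)$. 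If $P$ is not $1$-indecomposable and $2$-indecomposable, write $P = P_1 \prodg \cdots \prodg P_k$ with $k \geq 2$ and $P_i$ being $1$-indecomposable (hence WN, and of smaller cardinality), and set $\phi(P) = \phi(P_1) \cdots \phi(P_k)$.
\end{itemize}
Uniqueness of the two decompositions (proposition \ref{6}), together with the trichotomy of proposition \ref{24}, makes $\phi$ well-defined and unambiguous.

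Finally, I would check that $\phi$ is a morphism of $2$-$As$ algebras: for $P, Q \in \WNP$, concatenating the $1$-indecomposable factorizations gives $\phi(P \prodg Q) = \phi(P) \cdot \phi(Q)$, and concatenating the $2$-indecomposable factorizations gives $\phi(P \prodh Q) = \phi(P) * \phi(Q)$, exactly as in the proof of theorem \ref{9}. Uniqueness of $\phi$ is immediate, since every element of $\WNP \setminus \{1,\tun\}$ is forced by the morphism property to be a $\cdot$- or $*$-product of smaller WN posets, and this recursion terminates at $\tun$. There is no real obstacle here beyond correctly invoking proposition \ref{23} to rule out any $1,2$-indecomposable WN poset other than $\tun$; once this collapse is in hand, the argument of theorem \ref{9} transfers without change.
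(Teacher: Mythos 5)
Your proof is correct and follows exactly the route the paper intends: the paper proves theorem \ref{25} ``in the same way as theorem \ref{9}'', with the only new ingredient being that, by propositions \ref{20} and \ref{23}, the sole $1,2$-indecomposable (i.e.\ biconnected) WN poset is $\tun$, which is precisely the collapse you invoke, together with the stability of $\WNP$ under the factorizations of proposition \ref{6}. Nothing is missing.
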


%\subsection{Application: formal series of WN posets}

{\bf Notations}. We denote by $\WNP_h$ the set of $h$-connected WN posets and by $\WNP_r$ the set of $r$-connected WN posets. 
These sets are graded by the order. \\

Theorem \ref{25} implies that $\h_{\WNP}$ is isomorphic, as a Hopf algebra, to the Loday-Ronco $2$-$As$ free algebra on one generator. 
As a consequence, we obtain the following result:

\begin{prop}
We consider the formal series:
$$ \left\{ \begin{array}{rcl}
R_{\WNP}(x)&=&\displaystyle \sum_{n=0}^\infty card(\WNP(n)) x^n,\\
P_{\WNP_h}(x)&=&\displaystyle \sum_{n=1}^\infty card(\WNP_h(n)) x^n,\\
P_{\WNP_r}(x)&=&\displaystyle \sum_{n=1}^\infty card(\WNP_r(n)) x^n.
\end{array}\right.$$
Then:
$$P_{\WNP_h}(x)=P_{\WNP_r}(x)=\frac{1+x-\sqrt{1-6x+x^2}}{4},\hspace{1cm}
R_{\WNP}(x)=\frac{3-x-\sqrt{1-6x+x^2}}{2}.$$
In particular, $card(\WNP_h(n))$ is the $n$-th hyper-Catalan number.
\end{prop}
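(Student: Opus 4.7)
The strategy is to set up bijective decomposition equations for $h$-connected and $r$-connected WN posets, translate them into algebraic equations on the generating series, and solve. The involution swapping the two orders (which sends $\WNP$ to $\WNP$ and exchanges $\WNP_h$ and $\WNP_r$) reduces the system to a single quadratic.

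First, I would combine propositions \ref{20} and \ref{24}: for a WN poset, $1$-indecomposable is equivalent to $h$-connected and $2$-indecomposable is equivalent to $r$-connected. Together with the unique decompositions of proposition \ref{6}, this gives bijections:
\begin{itemize}
\item every $P\in\WNP_h$ with $P\neq \tun$ is uniquely $P'_1\prodh\cdots\prodh P'_l$ with $l\geq 2$ and $P'_j \in \WNP_r$;
\item every $P\in\WNP_r$ with $P\neq \tun$ is uniquely $P_1\prodg\cdots\prodg P_k$ with $k\geq 2$ and $P_i \in \WNP_h$;
\item every $P\in\WNP$ is uniquely $P_1\prodg\cdots\prodg P_k$ with $k\geq 0$ and $P_i \in \WNP_h$.
\end{itemize}

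Translating these three statements into generating-function identities yields
\[
P_{\WNP_h}(x)=x+\sum_{l\geq 2}P_{\WNP_r}(x)^l=x+\frac{P_{\WNP_r}(x)^2}{1-P_{\WNP_r}(x)},
\]
and symmetrically $P_{\WNP_r}(x)=x+P_{\WNP_h}(x)^2/(1-P_{\WNP_h}(x))$, and $R_{\WNP}(x)=1/(1-P_{\WNP_h}(x))$. The involution $\iota$ that swaps $\leq_h$ and $\leq_r$ preserves WN posets and exchanges $h$- and $r$-connectedness, so $P_{\WNP_h}=P_{\WNP_r}$; calling this common series $H(x)$, the first equation becomes $(1-H)H=(1-H)x+H^2$, i.e.\
\[
2H^2-(1+x)H+x=0.
\]
Selecting the root with $H(0)=0$ gives $H(x)=(1+x-\sqrt{1-6x+x^2})/4$, which is the claimed formula for $P_{\WNP_h}$ and $P_{\WNP_r}$. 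Substituting into $R_{\WNP}(x)=1/(1-H(x))$ and rationalising (using $(3-x)^2-(1-6x+x^2)=8$) yields $R_{\WNP}(x)=(3-x-\sqrt{1-6x+x^2})/2$.

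The final sentence, identifying $card(\WNP_h(n))$ with the $n$-th super/hyper-Catalan (little Schr\"oder) number, is then immediate from the known generating function of this sequence. The only delicate point in the plan is the careful use of propositions \ref{20} and \ref{24} to guarantee that the inductive decompositions take place inside $\WNP_h$ and $\WNP_r$; the rest is elementary algebra, with no genuine obstacle. As an alternative, since theorem \ref{25} identifies $\h_{\WNP}$ with the free $2$-$As$ algebra on one generator, the same functional equation can be read off from the Loday--Ronco description of that free algebra.
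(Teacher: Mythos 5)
Your argument is correct, but it is not the route the paper takes. The paper proves nothing combinatorial here: it simply observes that theorem \ref{25} identifies $\h_{\WNP}$ with the Loday--Ronco free $2$-$As$ algebra on one generator, and reads off the Poincar\'e series from the known description of that free algebra (your closing sentence is exactly the paper's proof). Your main argument instead re-derives the functional equations inside the WN-poset model: propositions \ref{6}, \ref{20} and \ref{24} do give the three unique decompositions you list (an element of $\WNP_h$ other than $\tun$ is not $r$-connected, hence not $2$-indecomposable, and its $\prodh$-factors are $2$-indecomposable WN, hence lie in $\WNP_r$; conversely such a product of $l\geq 2$ factors is WN and, being plane and not $2$-indecomposable, is $h$-connected by propositions \ref{20} and \ref{23}), proposition \ref{33} gives $P_{\WNP_h}=P_{\WNP_r}$ via $\iota$, and the quadratic $2H^2-(1+x)H+x=0$ with the root vanishing at $0$, together with $R_{\WNP}=1/(1-P_{\WNP_h})$ and the simplification $(3-x)^2-(1-6x+x^2)=8$, yields the stated series. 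What your approach buys is self-containedness: it does not import the Hilbert series of the free $2$-$As$ algebra from \cite{Loday,Loday2}, and in effect it reproves that computation in this combinatorial realization; what the paper's approach buys is brevity, since after theorem \ref{25} the statement is an immediate corollary of cited results. One small presentational point: when you invoke the converse direction of the bijections (that every $\prodh$-product of at least two elements of $\WNP_r$ lands back in $\WNP_h$, and dually), say a word, as above, on why the product is $h$-connected; this is the only step not literally contained in the propositions you cite.
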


%\begin{proof} By proposition \ref{33}, $\iota(\WNP_h)\subseteq \WNP_r$ and $\iota(\WNP_r)\subseteq \WNP_h$.
%As $\iota$ is an involution, $\iota$ is a bijection from $\WNP_h$ to $\WNP_r$. Hence, $P_{\WNP_h}(x)=P_{\WNP_r}(x)$.\\
%
%By proposition \ref{p26}, if $n\geq 2$, $\WNP(n)$ is the disjoint union of $\WNP_h(n)$ and $\WNP_r(n)$.
%So $R_{\WNP}(x)=P_{\WNP_h}(x)+P_{\WNP_r}(x)-x+1=2P_{\WNP_h}(x)-x+1$.
%Moreover, any element $P$ of $\WNP$ can be uniquely written as $P=P_1\prodg\cdots P_k$, with the $P_i$'s in $\WNP_h$.
%So:
%$$R_{\WNP}(x)=\frac{1}{1-P_{\WNP_h}(x)}.$$
%This gives:
%$$2P_{\WNP_h}(x)^2+(x-1)P_{\WNP_h}(x)+x=0.$$
%As $P'_{\WNP_h}(0)=1$:
%$$P_{\WNP_h}(x)=\frac{1+x-\sqrt{1-6x+x^2}}{4}.$$
%This immediately gives the expression of $R_{\WNP}(x)$. \end{proof}\\
%
For example:
$$\begin{array}{|c|c|c|c|c|c|c|c|c|c|c|c|}
\hline n&0&1&2&3&4&5&6&7&8&9&10\\
\hline |\WNP(n)|&1&1&2&6&22&90&394&1\:806&8\:558&41\:586&206\:098\\
\hline |\WNP_h(n)|&0&1&1&3&11&45&197&903&4\:279&20\:793&103\:049\\
\hline \end{array}$$
The second row of this array is (up to the signs) sequence A086456 of \cite{Sloane}. 
The third row is sequence A001003 (little Schroeder numbers). Moreover, if $n \geq 2$, then $card(\WNP_h(n))=card(\WNP(n))/2$. \\
%
%More generally, working with decorated double posets, we could prove the following result:
%
%\begin{cor}
%Let $A$ be a connected, graded free $2$-$As$ algebra, generated by a graded subspace $V$. We consider:
%$$ \left\{ \begin{array}{rcl}
%R(x)&=&\displaystyle \sum_{n=0}^\infty dim(A(n))x^n,\\
%P(x)&=&\displaystyle \sum_{n=1}^\infty dim(V(n))x^n.
%\end{array}\right.$$
%Then:
%$$R(x)=\frac{3-P(x)-\sqrt{1-6P(x)+P(x)^2}}{2}, \hspace{1cm} P(x)=\frac{-R(x)^2+3R(x)-2}{R(x)}.$$
%\end{cor}

Plane forests are examples of WN forests, and more precisely:

\begin{lemma}
Let $P$ be a plane poset. Then  $P$ is a plane forest if, and only if, it does not contain $\ptroisun$.
\end{lemma}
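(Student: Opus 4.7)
The plan is to prove both directions by translating ``$P$ is a plane forest'' into the poset-theoretic statement that the Hasse diagram of $(P,\leq_h)$ is a rooted forest, equipped with the plane structure determined by $\leq_r$.

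The forward direction is essentially immediate: if $P$ is a plane forest, then in the underlying rooted forest the strict $\leq_h$-predecessors of any vertex $c$ all lie on the unique path from the root of its tree up to $c$, and therefore form a chain. Since $\ptroisun$ is exactly a top element with two $\leq_h$-incomparable predecessors, it cannot embed as a subposet of $P$.

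For the converse, assume $P$ contains no $\ptroisun$. I would first observe that for every $v \in P$ the set of strict $\leq_h$-predecessors of $v$ is totally ordered by $\leq_h$: any two incomparable predecessors of $v$ would together with $v$ form a $\ptroisun$-subposet. This is exactly the condition that the Hasse graph of $(P,\leq_h)$ is a rooted forest, with roots the $\leq_h$-minimal elements. I would then upgrade this to a plane forest $F$ with underlying rooted forest $(P,\leq_h)$: the immediate $\leq_h$-successors of each vertex, as well as the set of roots, are pairwise $\leq_h$-incomparable, hence pairwise $\leq_r$-comparable by the plane property, and I linearize each such set by $\leq_r$.

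The main task is then to check that the right order $\leq_r^F$ inherited by $F$ via the recursive recipe recalled in Section 2.1 coincides with $\leq_r$ on $P$. By symmetry and totality it suffices to show that any $\leq_h$-incomparable pair $x,y$ with $x \leq_r y$ satisfies $x \leq_r^F y$. I would let $v$ be the lowest common $\leq_h$-ancestor of $x$ and $y$ when they lie in the same tree, and let $c_x \leq_h x$, $c_y \leq_h y$ denote the two distinct immediate children of $v$ along the paths to $x$ and $y$ (treating the case of distinct trees by letting $c_x, c_y$ be the two roots). Using Proposition~\ref{11}, $c_x \leq x \leq y$ in the total order $\leq$, so $c_x \leq y$; since $c_x$ and $y$ sit above $v$ on disjoint branches they are $\leq_h$-incomparable, so in fact $c_x \leq_r y$. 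If we had $c_y <_r c_x$, then $c_y <_r c_x \leq_r y$ would yield $c_y \leq_r y$, contradicting $c_y \leq_h y$ in a plane poset. Hence $c_x \leq_r c_y$, and by definition of $\leq_r^F$ this forces $x \leq_r^F y$. I expect this compatibility verification to be the main obstacle: the reduction ``no $\ptroisun$ $\Leftrightarrow$ Hasse forest'' is immediate, but showing that the ambient $\leq_r$ is uniquely dictated by the plane forest structure requires the careful use of Proposition~\ref{11} and of the plane incompatibility ``$x \leq_h y$ and $x \leq_r y$ cannot both hold''.
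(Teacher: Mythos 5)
Your proof is correct, and it takes a genuinely different route from the paper. The paper first notes that the absence of $\ptroisun$ forces $P$ to be WN (since $\pquatrecinq$ and $\pquatresix$ contain $\ptroisun$), and then runs an induction on $|P|$ through the trichotomy of proposition \ref{24}: either $P$ decomposes under $\prodg$ (apply the induction hypothesis to each factor), or $P=P_1\prodh\cdots\prodh P_l$ with the $P_i$ $r$-connected, in which case any two $\leq_h$-incomparable elements of some $P_i$ ($i<l$) together with an element of $P_l$ would form a $\ptroisun$, forcing $P_i=\tun$ and hence $P=\tun\prodh\cdots\prodh\tun\prodh P_l$, a plane tree. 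You instead argue non-inductively: the absence of $\ptroisun$ says precisely that the set of strict $\leq_h$-predecessors of each vertex is a chain, i.e.\ the Hasse graph of $(P,\leq_h)$ is a rooted forest; you then order roots and siblings by $\leq_r$ and check, via the lowest common ancestor and the total order of proposition \ref{11}, that the recursively defined right order of this plane forest coincides with $\leq_r$ (your argument that $c_x\leq_r y$, hence $c_x\leq_r c_y$ on pain of violating planarity or antisymmetry, is sound, the incomparability of $c_x$ and $y$ being exactly the chain condition for the down-set of $y$). The paper's proof is shorter because it leans on the already-developed WN structure theory (propositions \ref{23}--\ref{24}) and stays within the $\prodg$/$\prodh$ decomposition framework of the article; yours is self-contained, avoids the WN trichotomy entirely, and yields the extra structural information that on a forest poset the order $\leq_r$ is uniquely determined by the left-to-right orderings of roots and of siblings.
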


\begin{proof} $\Longrightarrow$. Obvious.\\

$\Longleftarrow$. As $P$ does not contain $\ptroisun$, it does not contain $\pquatrecinq$ nor $\pquatresix$, so is WN.
We proceed by induction on $n=|P|$. If $n=1$, then $n=\tun$ is a plane forest. Let us assume that all double posets
that do not contain $\ptroisun$ of cardinality $<n$ are plane forests ($n\geq 2$). 
As $n \geq 2$, two cases can hold by proposition \ref{24}: 
\begin{itemize}
\item $P$ is not $h$-connected. We can write $P=P_1\prodg\ldots \prodg P_k$, with $k \geq 2$. By the induction hypothesis,
$P_1,\ldots,P_k$ are plane forests, so $P$ is also a plane forest.
\item $P$ is not $r$-connected. We can write $P=P_1\prodh\ldots \prodh P_l$, with $l \geq 2$, $P_1,\ldots, P_l$ $r$-connected.
By the induction hypothesis, $P_l$ is a plane forest. 
Let us take $1\leq i \leq l-1$. Let $x,y \in P_i$, not comparable for $\geq_h$. We can assume that $x \leq_r y$ without loss of generality.
Let us choose any $z \in P_l$. Then $x,y\leq_h z$, so the subposet of $P$ formed by $x,y$ and $z$ is equal to $\ptroisun$: contradiction.
Hence, $P_i$ is totally ordered by $\geq_h$, so is equal to $\tun^{\prodh n_i}$ for a particular $n_i$.
As $P_i$ is $r$-connected, $n_i=1$. As a conclusion, $P=\tun\prodh\ldots \prodh \tun \prodh P_l$, so $P$ is a plane tree. 
\end{itemize} 
In both cases, $P$ is a plane forest. \end{proof}

\subsection{Can a poset become a WN poset?}

\begin{prop}
Let $P=(P,\leq_h)$ be a finite poset. There exists a partial order $\leq_r$ such that $\tilde{P}=(P,\leq_h,\leq_r)$ is a WN poset if, and only if,
$P$ does not contain any subposet isomorphic to $\pquatresix$.
\end{prop}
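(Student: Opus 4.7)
Observe first that as abstract posets, $\pquatrecinq$ and $\pquatresix$ are both the four-element ``N'' (two minimal elements, two maximal elements, three of the four possible strict relations), so the hypothesis ``$(P,\leq_h)$ contains $\pquatresix$ as a subposet'' is equivalent to ``$(P,\leq_h)$ contains an $N$-subposet''. I will prove the two implications separately.

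For the $\Rightarrow$ direction, assume $\tilde{P}=(P,\leq_h,\leq_r)$ is WN, and suppose for contradiction that four elements $a,b,c,d\in P$ induce the $N$-shape in $\leq_h$, say with $a<_h c$, $a<_h d$, $b<_h d$ and no other $\leq_h$-relations. The restriction of $\tilde{P}$ to $\{a,b,c,d\}$ is still a plane poset, so $\leq_r$ is defined exactly on the three $\leq_h$-incomparable pairs $\{a,b\}$, $\{b,c\}$, $\{c,d\}$. Transitivity of $\leq_r$, combined with the requirement that the $\leq_h$-comparable pairs $\{a,c\}$, $\{a,d\}$, $\{b,d\}$ stay $\leq_r$-incomparable, forces the $\leq_r$-directions on $\{a,b\}$ and $\{c,d\}$ to agree while the direction on $\{b,c\}$ is opposite. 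The two resulting plane structures are (up to isomorphism) exactly $\pquatrecinq$ and $\pquatresix$, both excluded from WN posets, contradiction.

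For the $\Leftarrow$ direction I induct on $|P|$, the cases $|P|\leq 1$ being trivial. Suppose $(P,\leq_h)$ is $N$-free and $|P|\geq 2$. If $(P,\leq_h)$ is not $h$-connected, I decompose it into its $h$-connected components $Q_1,\ldots,Q_k$ (each inheriting $N$-freeness from $P$), use the induction hypothesis to produce WN enrichments $\tilde{Q}_i$, and set $\tilde{P}=\tilde{Q}_1\prodg\cdots\prodg\tilde{Q}_k$; by the lemma following definition~\ref{21}, $\tilde{P}$ is WN and its $\leq_h$-order is the original one. If $(P,\leq_h)$ is $h$-connected, I seek a \emph{series cut}: a partition $P=A\sqcup B$ with $A,B$ non-empty and $a<_h b$ for every $a\in A,\,b\in B$. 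Given such a cut, both $A$ and $B$ are again $N$-free, so by induction they admit WN enrichments $\tilde{A}$ and $\tilde{B}$, and $\tilde{P}=\tilde{A}\prodh\tilde{B}$ is WN with the desired $\leq_h$-order, again by the same lemma.

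The main obstacle is establishing this series cut for $h$-connected $N$-free posets of size at least two; this is the nontrivial half of the Valdes--Tarjan--Lawler characterization of $N$-free posets as series-parallel. I would argue by contradiction: assume $P$ is $h$-connected, $N$-free, $|P|\geq 2$, and has no series cut. A unique minimum would yield the series cut $\{m\}\sqcup(P\setminus\{m\})$, so $P$ has at least two minimal elements and, dually, at least two maximal elements. Using $h$-connectedness, choose a shortest zigzag path in the Hasse diagram joining two minimal elements; by analyzing the summit of the first peak of this path and exploiting the fact that no candidate initial segment separates $P$, one extracts four elements in the N-configuration and contradicts $N$-freeness. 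With this lemma in hand the induction runs smoothly, since the paper has already shown that $\prodg$ and $\prodh$ of WN posets are WN.
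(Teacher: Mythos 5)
Your overall architecture is the same as the paper's: the forward direction (any plane structure on an N-shaped $\leq_h$-subposet is forced, by transitivity and the plane-poset condition, to be $\pquatrecinq$ or $\pquatresix$) is exactly the paper's argument, and the backward induction --- parallel composition $\prodg$ over $h$-connected components, series composition $\prodh$ across a series cut, with WN-ness preserved by the lemma after Definition \ref{21} --- is also the paper's plan. The genuine gap is precisely the step you yourself call ``the main obstacle'': that an $h$-connected, N-free poset with at least two elements admits a series cut. You never prove it. Your sketch (``choose a shortest zigzag path in the Hasse diagram joining two minimal elements; by analyzing the summit of the first peak \dots and exploiting the fact that no candidate initial segment separates $P$, one extracts four elements in the N-configuration'') is an outline of intent, not an argument: the phrase ``candidate initial segment'' is undefined, and the step in which the four elements forming an N are actually exhibited --- which is the entire content of the lemma --- is asserted rather than carried out. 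Invoking the Valdes--Tarjan--Lawler characterization of N-free posets as series-parallel as a black box would at least be a complete (if external) justification, but as written you neither rely on it as a citation nor supply a self-contained proof, so the $\Longleftarrow$ direction is incomplete.

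For comparison, the paper closes exactly this gap by a direct local construction: with $M$ the set of maximal elements it sets $I=\{x\in P\mid x\leq_h y\ \mbox{for all } y\in M\}$, proves $I\neq\emptyset$ by taking $x$ maximizing the number of maximal elements above it and producing a subposet isomorphic to $\pquatresix$ if $x\notin I$ (this is where connectedness of the Hasse graph is used), and then shows that either $I=P$, in which case $P$ has a unique maximal element $z$ and one takes $\tilde{P}=\widetilde{P\setminus\{z\}}\prodh\tun$, or $\emptyset\subsetneq I\subsetneq P$ and every element of $I$ lies below every element of $P\setminus I$ (again by exhibiting an N otherwise), which is the series cut $\tilde{P}=\tilde{I}\prodh\widetilde{P\setminus I}$. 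Some concrete construction of this kind, with the N-extraction arguments written out, is what your proposal still needs to be a proof.
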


\begin{proof} $\Longrightarrow$. Let us assume that there exists such a $\tilde{P}$, and that $P$ contains a subposet $Q$ equal to $\pquatresix$. 
Then, restricting $\leq_r$, there exists a partial order $\leq_r$ on $Q$ making $Q$ a plane poset $\tilde{Q}$.
It is easy to see that there are only two possibilities for $\tilde{Q}$: $\pquatresix$ or $\pquatrecinq$. As $\tilde{P}$ contains $\tilde{Q}$,
it is not WN: contradiction.\\

$\Longleftarrow$. By induction on $n=Card(P)$. It is obvious if $n=0,1$. Let us assume the result at all ranks $<n$.

\emph{ First case}. Let us assume that the Hasse graph of $P$ is not connected. We can write $P=P_1\sqcup \ldots \sqcup P_k$, with $k\geq 2$,
where the $P_i$'s are the connected components of the Hasse graph of $P$. By the induction hypothesis, we can construct
$\tilde{P}_1,\ldots,\tilde{P}_k$. We then take $\tilde{P}=\tilde{P}_1\prodg \ldots \prodg \tilde{P}_k$.\\

\emph{ Second case.} We now assume that the Hasse graph of $P$ is connected. Let $M$ be the set of maximal elements of $P$. We put:
$$I=\{x\in P\mid \forall y\in M,\:x\leq_h y\}.$$
Let us first prove that $I$ is non empty. Let $x\in P$, such that the number of elements $y\in M$ with $x\leq_h y$ is maximal.
If $x\notin I$, there exists $z\in M$, such that $x$ and $z$ are not comparable for $\leq_h$, 
as it is not possible to have $z \leq_h x$ by maximality of $z$. Moreover, there exists $z'\in M$, such that $x \leq_h z'$ (so $z\neq z'$). 
As the Hasse graph of $P$ is connected, up to a change of $z,z'$, there exists $y$, such that $y\leq_h z,z'$ (so $y\neq x$).
As $z,z'\in M$, they are not comparable for $\leq_h$, so $y\neq z,z'$.
If $y\leq_h x$, then $y\leq_h z$ and $y \leq_h m$ for all $m\in M$ such that $x\leq_h m$: contradicts the choice of $x$.
If $x\leq_h y$, then $x \leq_h z$: contradiction. So $x$ and $y$ are not comparable for $\leq_h$ (so $x\neq z,z'$). 
Finally, the subposet $Q=\{x,y,z,z'\}$ of $P$ is isomorphic to $\pquatresix$: contradiction.\\

We obtain then two subcases:
\begin{itemize}
\item $I=P$. Let $z,z'\in M$. Then $z,z'\in I$, so $z\leq_h z'$, $z'\leq_h z$ and finally $z=z'$, so $M$ is reduced to a single element $z$.
Moreover, for all $x\in P$, $x\leq z$. The induction hypothesis holds on $Q=P-\{z\}$, and we take $\tilde{P}=\tilde{Q}\prodh \tun$.
\item $\emptyset \subsetneq I\subsetneq P$. Let us take $x\in I$ and $y\in P\setminus I$. Let us assume we don't have $x\leq_h y$.
If $y\leq_h x$, then, as $x\in I$, for all $z\in M$, $y\leq_h z$ and $y\in I$: contradiction. So $x$ and $y$ are not comparable for $\leq_h$ (and $x\neq y$).
As $y\notin I$, there exists $z\in M$, $y$ and $z$ are not comparable for $\leq_h$ (so $y\neq z$). There also exists $z'\in M$, $y\leq_h z'$ (so $z\neq z'$). 
As $x\in I$, $x\leq_h z,z'$. As $x$ and $y$ are not comparable for $\leq_h$, $x\neq z$. As $z$ and $z'$ are two elements of $M$, 
they are not comparable for $\leq_h$, so $x \neq z'$. As $x$ and $y$ are not comparable for $\leq_h$, $y\neq z'$. Finally, the suposet $Q=\{x,y,z,z'\}$ of $Q$
isomorphic to $\pquatresix$: contradiction. So $x\leq_h y$.

We proved that for all $x\in I$, for all $y\in P\setminus I$, $x\leq_h y$. The induction hypothesis holds for $I$ and $P\setminus I$; we take
$\tilde{P}=\tilde{I}\prodh \widetilde{P\setminus I}$.
\end{itemize}

In all cases, we proved the existence of a convenient $\tilde{P}$. \end{proof}

\section{Hopf algebra structure on $\h_{\DP}$}

\begin{defi}\textnormal{
\cite{Reutenauer}. Let $P=(P,\leq_1,\leq_2)$ be a double poset and let $I\subseteq P$. We shall say that $I$ is a \emph{ $1$-ideal} of $P$ if
for all $x\in I$, $y\in P$, $x\leq_1 y$ implies that $y\in I$. We shall say shortly ideal instead of $1$-ideal in the sequel.
}\end{defi}

\begin{prop}\label{28}
$\h_{\DP}$ is given a Hopf algebra structure with the product $\prodg$ and the following coproduct: for any double poset $P$,
$$\Delta(P)=\sum_{\mbox{\scriptsize $I$ ideal of $P$}} (P\setminus I)\otimes I.$$
This Hopf algebra is graded by the cardinality of the double posets. Moreover, $(\h_{\DP},\prodh,\Delta)$ is an infinitesimal Hopf algebra.
\end{prop}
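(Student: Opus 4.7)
The plan is to verify four separate claims: coassociativity of $\Delta$, its multiplicativity with respect to $\prodg$, the grading by cardinality, and the infinitesimal compatibility with $\prodh$. Throughout, the counit will be $\varepsilon(P) = 1$ if $P = \emptyset$ and $0$ otherwise, and the counit axiom is immediate since $\emptyset$ and $P$ are always $1$-ideals of $P$, contributing the extremal terms $P \otimes \emptyset$ and $\emptyset \otimes P$ in $\Delta(P)$. Connectedness of the grading, combined with the graded bialgebra structure, will automatically promote $(\h_{\DP}, \prodg, \Delta)$ to a Hopf algebra.

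For coassociativity I would show that both $(\Delta \otimes \mathrm{id})\Delta(P)$ and $(\mathrm{id} \otimes \Delta)\Delta(P)$ equal the sum, over ordered triples $(A,B,C)$ partitioning $P$ such that $C$ and $B \cup C$ are both $1$-ideals of $P$, of $A \otimes B \otimes C$. The first iterate produces pairs (a $1$-ideal $C$ of $P$, together with a $1$-ideal $B$ of the restricted double poset $P \setminus C$), whereas the second produces pairs (a $1$-ideal $I$ of $P$, together with a $1$-ideal $C$ of $I$). The required short lemma is that these two kinds of pairs are in natural bijection via $I = B \cup C$, a direct consequence of the definition of a $1$-ideal and of the transitivity of $\leq_1$.

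For multiplicativity with respect to $\prodg$, the key observation is that in $P \prodg Q$ no element of $P$ is $\leq_1$-comparable to any element of $Q$, so every $1$-ideal of $P \prodg Q$ splits uniquely as $I_P \sqcup I_Q$ for $1$-ideals $I_P \subseteq P$ and $I_Q \subseteq Q$. Moreover, as double posets, $(P \prodg Q) \setminus (I_P \sqcup I_Q) = (P \setminus I_P) \prodg (Q \setminus I_Q)$ and $I_P \sqcup I_Q = I_P \prodg I_Q$. Summing over these ideals immediately gives $\Delta(P \prodg Q) = \Delta(P) \prodg \Delta(Q)$. Preservation of cardinality by $\prodg$ and by $\Delta$ is clear, so $(\h_{\DP}, \prodg, \Delta, \varepsilon)$ is a connected graded bialgebra.

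The last and most delicate task is the infinitesimal compatibility with $\prodh$, i.e.\ the identity $\Delta(P \prodh Q) = \sum (P \prodh Q_{(1)}) \otimes Q_{(2)} + \sum P_{(1)} \otimes (P_{(2)} \prodh Q) - P \otimes Q$. The main step is the classification of $1$-ideals of $P \prodh Q$: since every element of $P$ is $\leq_1$-below every element of $Q$, a $1$-ideal $I$ either lies entirely in $Q$ (any $1$-ideal $I_Q$ of $Q$), or meets $P$ (in which case it necessarily contains all of $Q$, so $I = I_P \cup Q$ for some non-empty $1$-ideal $I_P$ of $P$). The first family contributes $\sum (P \prodh Q_{(1)}) \otimes Q_{(2)}$; the second contributes $\sum_{I_P \neq \emptyset} (P \setminus I_P) \otimes (I_P \prodh Q)$, which matches $\sum P_{(1)} \otimes (P_{(2)} \prodh Q) - P \otimes Q$ because the $I_P = \emptyset$ summand on the right is exactly $P \otimes Q$. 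The expected obstacle of the entire proposition is precisely this dichotomy of ideals; once it is in place, the identity reduces to bookkeeping.
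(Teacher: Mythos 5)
Your proof is correct and follows essentially the same route as the paper: coassociativity via the correspondence between ideals of ideals and ideals of complements, multiplicativity from the splitting of ideals of $P\prodg Q$, and the infinitesimal relation from the dichotomy that an ideal of $P\prodh Q$ either lies in $Q$ or contains all of $Q$. The only (harmless) difference is that you spell out the counit and the connected-graded argument for the antipode, which the paper leaves implicit.
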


\begin{proof} It is proved in \cite{Reutenauer} that $(\h_{\DP},\prodg,\Delta)$ is a Hopf algebra. We give here the proof again for the reader's convenience.
Let us first show that $\Delta$ is coassociative. Let $P\in \DP$.
If $I$ is an ideal of $P$ and $J$ is an ideal of $I$, then clearly $J$ is also an ideal of $P$. If $K$ is an ideal of $P\setminus I$, then clearly
$I\cup K$ is an ideal of $P$. As a consequence:
$$(Id \otimes \Delta)\circ \Delta(P)=(\Delta \otimes Id) \circ \Delta(P)=
\sum_{\substack{P=I_1\sqcup I_2 \sqcup I_3\\\mbox{\scriptsize $I_2$ and $I_2\sqcup I_3$  ideals of }P}} I_1\otimes I_2 \otimes I_3.$$

Let $P,Q\in \DP$ and let $I$ be an ideal of $P\prodg Q$. Then $I\cap P$ is an ideal of $P$ and $I\cap Q$ is an ideal of $Q$.
In the other sense, if $I$ is an ideal of $P$ and $J$ is an ideal of $Q$, then $I\prodg J$ is an ideal of $P\prodg Q$. So:
$$\Delta(P\prodg Q)=\sum_{\mbox{\scriptsize $I,J$ ideals of $P,Q$}} (P\setminus I)\prodg (Q\setminus J)\otimes I\prodg J=\Delta(P)\prodg \Delta(Q),$$
so $(\h_{\DP},\prodg,\Delta)$ is a graded Hopf algebra. \\

Let $P,Q\in \DP$, non empty, and let $I$ be an ideal of $P\prodh Q$. If $I\cap P$ is nonempty, then $Q \subseteq I$.
So there are five types of ideals of $P\prodh Q$: 
$I=\emptyset$, or $I=P\prodh Q$, or $I=Q$, or $I$ is a non trivial ideal of $Q$, or $I\cap P$ is a non trivial of $P$ and $Q \subseteq I$. Hence:
\begin{eqnarray*}
\Delta(P\prodh Q)&=&P\prodh Q\otimes 1+1\otimes P\prodh Q+P \otimes Q+(P \otimes 1)\prodh \tdelta(Q)+\tdelta(P) \prodh (1 \otimes Q)\\
&=&P\prodh Q\otimes 1+1\otimes P\prodh Q+P \otimes Q+(P \otimes 1)\prodh \Delta(Q)-P\prodh Q \otimes 1-P\otimes Q\\
&&+\Delta(P) \prodh (1 \otimes Q)-P\otimes Q-1\otimes P\prodh Q\\
&=&(P \otimes 1)\prodh \Delta(Q)+\Delta(P) \prodh (1 \otimes Q)-P\otimes Q,
\end{eqnarray*}
so $(\h_{\DP},\prodh,\Delta)$ is an infinitesimal Hopf algebra. \end{proof}\\

{\bf Examples.} 
\begin{eqnarray*}
\tdelta(\tdeux)&=&\tun\otimes \tun\\
\tdelta(\ttroisun)&=&2\tdeux \otimes \tun+\tun\otimes \tun\tun\\
\tdelta(\ttroisdeux)&=&\tun\otimes \tdeux+\tdeux \otimes \tun\\
\tdelta(\ptroisun)&=&\tun\tun \otimes \tun+2\tun\otimes \tdeux\\
\tdelta(\tquatreun)&=&\tun\otimes \tun\tun\tun+3\tdeux\otimes \tun\tun+3\ttroisun\otimes \tun\\
\tdelta(\tquatredeux)&=&\ttroisdeux\otimes \tun+\ttroisun\otimes \tun+\tdeux\otimes \tdeux+\tdeux \otimes \tun\tun+\tun \otimes \tdeux\tun\\
\tdelta(\tquatretrois)&=&\ttroisdeux\otimes \tun+\ttroisun\otimes \tun+\tdeux\otimes \tdeux+\tdeux \otimes \tun\tun+\tun \otimes \tun\tdeux\\
\tdelta(\tquatrequatre)&=&2\ttroisdeux \otimes \tun+\tun\otimes \ttroisun+\tdeux \otimes \tun\tun\\
\tdelta(\tquatrecinq)&=&\tun\otimes \ttroisdeux+\tdeux \otimes \tdeux+\ttroisdeux \otimes \tun\\
\tdelta(\pquatreun)&=&\tun\tun\tun\otimes\tun+3\tun\tun\otimes\tdeux+3\tun \otimes \ptroisun\\
\tdelta(\pquatredeux)&=&\tun\otimes\ttroisdeux + \tun\otimes\ptroisun+\tdeux\otimes \tdeux+\tun\tun \otimes \tdeux+\tdeux \tun \otimes \tun\\
\tdelta(\pquatretrois)&=&\tun\otimes\ttroisdeux +\tun\otimes\ptroisun+\tdeux\otimes \tdeux+\tun\tun\otimes \tdeux +\tun \tdeux \otimes \tun\\
\tdelta(\pquatrequatre)&=&2\tun \otimes \ttroisdeux +\ptroisun\otimes \tun+ \tun\tun \otimes \tdeux\\
\tdelta(\pquatrecinq)&=&\tdeux\tun \otimes \tun+\ptroisun \otimes \tun+\tdeux \otimes \tdeux+\tun\tun\otimes \tun\tun+\tun\otimes \tun\tdeux+\tun \otimes \ttroisun\\
\tdelta(\pquatresix)&=&\tun\tdeux \otimes \tun+\ptroisun \otimes \tun+\tdeux \otimes \tdeux+\tun\tun\otimes \tun\tun+\tun\otimes \tdeux \tun+\tun \otimes \ttroisun\\
\tdelta(\pquatresept)&=&2\ptroisun\otimes \tun+2\tun \otimes \ttroisun+\tun\tun\otimes \tun\tun\\
\tdelta(\pquatrehuit)&=&\ttroisun \otimes \tun+2\tdeux \otimes \tdeux+\tun\otimes \ptroisun
\end{eqnarray*}

{\bf Remarks.} \begin{enumerate}
\item If $\P$ is a plane poset, then all its subposets are plane. If $\P$ is WN, then all its subposets are WN.
As a consequence, $\h_{\PP}$ and $\h_{\WNP}$ are Hopf subalgebras of $\h_{\DP}$.
\item Similarly, $\h_{\PF}$ is a Hopf subalgebra of $\h_{\DP}$. It is the coopposite of the Connes-Kreimer Hopf algebra of plane trees, 
as defined in \cite{Foissy2,Holtkamp}.
\end{enumerate}

As $(\h_{\DP},\prodh,\Delta)$ is an infinitesimal Hopf algebra, the coalgebra $(\h_{\DP},\Delta)$ is cofree, see \cite{Loday}. 
Similarly, $\h_{\PP}$ and $\h_{\WNP}$ are cofree. From the results of \cite{Foissy2}:

\begin{cor} \label{29} \begin{enumerate}
\item The Hopf algebras $\h_{\DP}$, $\h_{\PP}$ and $\h_{\WNP}$ are free and cofree.
\item The Hopf algebras $\h_{\DP}$, $\h_{\PP}$ and $\h_{\WNP}$ are self-dual.
\item If the characteristic of the base field is zero, the Lie algebras $Prim(\h_{\DP})$, $Prim(\h_{\PP})$ and $Prim(\h_{\WNP})$ are free.
\end{enumerate}\end{cor}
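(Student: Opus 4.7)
The plan is to combine three ingredients that are already essentially in place. First, freeness as an associative algebra under $\prodg$: for $\h_{\DP}$ this is precisely Proposition \ref{6}, which expresses every double poset uniquely as a $\prodg$-product of $1$-indecomposable ones; the same statement restricts to $\h_{\PP}$ and $\h_{\WNP}$, because the $\prodg$-factors of a plane (respectively WN) poset are again plane (respectively WN), as observed in Sections~2 and~3. Second, cofreeness: this has already been noted in the paragraph immediately preceding the corollary, as a consequence of Proposition \ref{28} together with Loday's theorem of \cite{Loday} asserting that any graded connected infinitesimal Hopf algebra is cofree as a coalgebra. The argument passes to $\h_{\PP}$ and $\h_{\WNP}$ since they are stable under $\prodh$ and $\Delta$, so they are themselves connected infinitesimal Hopf algebras.

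With assertion (1) established, assertions (2) and (3) follow by a direct application of general results from \cite{Foissy2}. Specifically, that paper proves that any graded, connected Hopf algebra which is simultaneously free as an algebra and cofree as a coalgebra is isomorphic to its own graded dual, and moreover, in characteristic zero, its Lie algebra of primitives is free. Applying these two theorems to each of $\h_{\DP}$, $\h_{\PP}$ and $\h_{\WNP}$ yields (2) and (3) simultaneously.

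There is no genuine obstacle in this argument: each clause reduces to citing a result already available, namely Proposition \ref{6}, the Loday cofreeness theorem for infinitesimal Hopf algebras, and the two structural theorems from \cite{Foissy2}. The only verification needed -- and it is trivial -- is that $\h_{\PP}$ and $\h_{\WNP}$ genuinely inherit all of the hypotheses (grading, connectedness, freeness, cofreeness) from $\h_{\DP}$, which amounts to their closure under $\prodg$, $\prodh$, and the coproduct, facts observed earlier in the paper.
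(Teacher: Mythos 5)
Your argument is correct and follows the paper's own route: freeness comes from the unique decomposition of Proposition \ref{6} (which restricts to plane and WN posets since their $\prodg$-factors stay in the class), cofreeness from the infinitesimal Hopf algebra structure of Proposition \ref{28} via \cite{Loday}, and self-duality together with freeness of the primitive Lie algebras in characteristic zero from the general results of \cite{Foissy2} on free and cofree graded connected Hopf algebras. Nothing essential differs from the paper's treatment.
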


\section{Hopf pairing of $\h_{\DP}$}

\begin{defi}\textnormal{\label{30}
Let $P,Q$ be two elements of $\DP$. We denote by $S(P,Q)$ the set of bijections $\sigma:P\longrightarrow Q$ such that, for all $i,j \in P$:
\begin{itemize}
\item ($i\leq_1 j$ in $P$) $\Longrightarrow$ ($\sigma(i) \leq_2 \sigma(j)$ in $Q$).
\item ($\sigma(i)\leq_1 \sigma(j)$ in $Q$) $\Longrightarrow$ ($i \leq_2 j$ in $P$).
\end{itemize}}\end{defi}

{\bf Remark.} The elements of $(P,Q)$ are called \emph{ pictures} in \cite{Reutenauer}. 

\begin{theo}
We define a pairing $\langle-,-\rangle:\h_{\DP}\otimes \h_{\DP} \longrightarrow K$ by:
$$\langle P,Q \rangle=Card(S(P,Q)),$$
for all $P,Q \in \DP$. Then $\langle-,-\rangle$ is a homogeneous symmetric Hopf pairing on the Hopf algebra $\h_{\DP}=(\h_{\DP},\prodg,\Delta)$.
\end{theo}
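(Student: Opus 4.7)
The statement splits into three claims: homogeneity, symmetry, and compatibility with product and coproduct (plus the trivial unit/counit axioms). Homogeneity is immediate since $S(P,Q)$ is a set of bijections, hence empty unless $|P|=|Q|$. For the unit/counit axiom, $S(1,Q)$ consists of the empty map if $Q=1$ and is empty otherwise, giving $\langle 1,Q\rangle=\varepsilon(Q)$.

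For symmetry, the plan is to show that $\sigma\mapsto\sigma^{-1}$ is an involutive bijection $S(P,Q)\to S(Q,P)$. The two defining implications of Definition \ref{30} translate directly into one another after the substitutions $i=\sigma^{-1}(i')$, $j=\sigma^{-1}(j')$: the first condition for $\sigma$ becomes the second condition for $\sigma^{-1}$, and vice versa. This yields $\langle P,Q\rangle=\langle Q,P\rangle$.

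The core of the proof is the identity $\langle P\prodg P',Q\rangle=\langle P\otimes P',\Delta(Q)\rangle$. I would prove it by constructing a bijection
$$\Phi:S(P\prodg P',Q)\;\longrightarrow\;\bigsqcup_{I\text{ ideal of }Q} S(P,Q\setminus I)\times S(P',I),$$
sending $\sigma$ to the pair of restrictions $(\sigma|_P,\sigma|_{P'})$ with $I=\sigma(P')$. The inverse glues together a pair $(\sigma_1,\sigma_2)$ into a single bijection $P\prodg P'\to Q$. Once this bijection is established, summing over $I$ and invoking symmetry gives both Hopf pairing identities.

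The one step that truly demands care, and which I regard as the main obstacle, is proving that the subset $I=\sigma(P')$ is a $\leq_1$-ideal of $Q$, and dually that a decomposition $Q=(Q\setminus I)\sqcup I$ along an ideal is compatible with the mixed relations of the $\prodg$-product (namely $x\leq_2 y$ and $x,y$ not $\leq_1$-comparable for $x\in P$, $y\in P'$). Both directions use the same mechanism: if $i\in I$, $j\in Q$ with $i\leq_1 j$ and $\sigma^{-1}(j)\in P$, then the second axiom of $S$ forces $\sigma^{-1}(j)\leq_2\sigma^{-1}(i)$ in $P\prodg P'$, contradicting the definition of $\prodg$ (which imposes $\sigma^{-1}(j)\leq_2\sigma^{-1}(i)$ from the opposite side, hence would force equality across disjoint blocks). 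Conversely, for a pair $(\sigma_1,\sigma_2)$, the mixed axiom $\sigma(y)\leq_1\sigma(x)$ (with $x\in P$, $y\in P'$) is ruled out precisely because $\sigma(y)\in I$ and $I$ is an ideal, so $\sigma(x)\in I$ would follow, which is absurd. Once this case analysis is completed, $\Phi$ is a bijection and the counting identity $|S(P\prodg P',Q)|=\sum_I |S(P,Q\setminus I)|\,|S(P',I)|$ is exactly the required Hopf pairing relation.
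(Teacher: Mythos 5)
Your proof is correct and follows essentially the same route as the paper: the key step is exactly the paper's bijection $\Upsilon$ (your $\Phi$) from $S(P\prodg P',Q)$ to $\bigcup_I S(P,Q\setminus I)\times S(P',I)$, and your verification that $\sigma(P')$ is an ideal and that gluing a pair along an ideal respects the mixed relations of $\prodg$ is precisely the content the paper delegates to \cite{Reutenauer}. The additional observations (homogeneity, $\sigma\mapsto\sigma^{-1}$ for symmetry, deducing the second Hopf identity from symmetry) are the standard complements and are all sound.
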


\begin{proof} See \cite{Reutenauer}. Let us consider the following map:
$$\Upsilon: \left\{ \begin{array}{rcl}
S(P_1\prodg P_2,Q)&\longrightarrow &\displaystyle \bigcup_{\mbox{\scriptsize $I$ ideal of $Q$}} S(P_1,Q\setminus I) \times S(P_2,I)\\
\sigma &\longrightarrow &(\sigma_{\mid P_1},\sigma_{\mid P_2})\in S(P_1,Q\setminus \sigma(P_2))\times S(P_2,\sigma(P_2)).
\end{array}\right.$$
The proof essentially consists in showing that $\Upsilon$ is a bijection. \end{proof}\\

{\bf Examples.} Here are the matrices of the pairing $\langle-,-\rangle$ restricted to $\h_{\PP}(n)$, for $n=1,2,3$.
$$\begin{array}{c|c}
&\tun\\
\hline \tun&1
\end{array} \hspace{1cm}
\begin{array}{c|c|c}
&\tdeux&\tun\tun\\
\hline \tdeux&0&1\\
\hline \tun\tun&1&2
\end{array}$$
$$\begin{array}{c|c|c|c|c|c|c}
&\ttroisdeux&\ttroisun&\ptroisun&\tdeux\tun&\tun\tdeux&\tun\tun\tun\\
\hline \ttroisdeux&0&0&0&0&0&1\\
\hline \ttroisun&0&0&0&0&1&2\\
\hline \ptroisun&0&0&0&1&0&2\\
\hline \tdeux\tun&0&0&1&1&1&3\\
\hline \tun\tdeux&0&1&0&1&1&3\\
\hline \tun\tun\tun&1&2&2&3&3&6
\end{array}$$

What is the transpose of $\prodh$ for this pairing?\\

{\bf Notations.} Let $P\in \DP$. We put $\displaystyle \Delta_\prodg(P)=\sum_{\substack{P_1,P_2\in \DP\\P_1\prodg P_2=P}} P_1\otimes P_2$.\\

{\bf Remark.} \begin{enumerate}
\item In other words, if $P=P_1\prodg \ldots \prodg P_r$ is the decomposition of $P$ into $1$-indecomposable posets, then:
$$\Delta(P)=\sum_{i=0}^r (P_1 \prodg \ldots \prodg P_i) \otimes (P_{i+1}\prodg \ldots \prodg P_r).$$
\item Moreover, $(\h_{\DP},\prodg,\Delta_\prodg)$ is an infinitesimal Hopf algebra, and the space of primitive elements for the coproduct $\Delta_\prodg$
is generated by the set of $1$-indecomposable double posets.
\end{enumerate}

\begin{prop}
For all $x,y,z \in \h_{\DP}$, $\langle x\prodh y,z\rangle=\langle x\otimes y,\Delta_\prodg(z)\rangle$.
\end{prop}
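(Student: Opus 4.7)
The plan is to prove the identity on the basis, so it suffices to fix double posets $P$, $Q$, $R$ and exhibit a bijection
\[
\Psi : S(P \prodh Q, R) \longrightarrow \bigsqcup_{R = R_1 \prodg R_2} S(P, R_1) \times S(Q, R_2),
\]
since then $\langle P\prodh Q, R\rangle$ and $\langle P\otimes Q,\Delta_\prodg(R)\rangle$ equal the cardinalities of the two sides.

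The forward map is to send $\sigma \in S(P\prodh Q,R)$ to $(\sigma_{|P},\sigma_{|Q})$, where we set $R_1=\sigma(P)$ and $R_2=\sigma(Q)$. The crux is to check that the set decomposition $R = R_1 \sqcup R_2$ is in fact a $\prodg$-decomposition of double posets. For $p \in P$ and $q \in Q$ one has $p \leq_1 q$ in $P \prodh Q$ with $p,q$ incomparable for $\leq_2$; the first picture condition gives $\sigma(p) \leq_2 \sigma(q)$ in $R$, and the second rules out $\sigma(p) \leq_1 \sigma(q)$ or $\sigma(q) \leq_1 \sigma(p)$ (either would force $p,q$ to be $\leq_2$-comparable in $P\prodh Q$). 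Hence every element of $R_1$ is $\leq_2$ every element of $R_2$ and incomparable to it for $\leq_1$, so $R = R_1 \prodg R_2$. It is then straightforward to verify that $\sigma_{|P} \in S(P,R_1)$ and $\sigma_{|Q} \in S(Q,R_2)$, since $P$ and $Q$ (resp.\ $R_1$ and $R_2$) are double subposets of $P\prodh Q$ (resp.\ $R$), so both picture conditions on $\sigma$ restrict cleanly.

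For the inverse map, given a splitting $R = R_1 \prodg R_2$ and pictures $\sigma_1 \in S(P,R_1)$, $\sigma_2 \in S(Q,R_2)$, define $\sigma : P\prodh Q \to R$ by $\sigma_{|P}=\sigma_1$, $\sigma_{|Q}=\sigma_2$. To check the first picture condition for $\sigma$, take $i \leq_1 j$ in $P\prodh Q$: if $i,j$ are both in $P$ or both in $Q$ apply the condition for $\sigma_1$ or $\sigma_2$; if $i\in P$ and $j \in Q$ then $\sigma(i)\in R_1$, $\sigma(j)\in R_2$, and the definition of $\prodg$ gives $\sigma(i) \leq_2 \sigma(j)$ in $R$. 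For the second condition, if $\sigma(i)\leq_1\sigma(j)$ in $R$ then, since elements of $R_1$ and $R_2$ are $\leq_1$-incomparable in $R_1\prodg R_2$, both images lie in the same block, and the picture condition for $\sigma_1$ or $\sigma_2$ gives $i \leq_2 j$ in $P$ or in $Q$, hence in $P\prodh Q$.

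Finally, the two constructions are mutually inverse by their very definitions: applying $\Psi$ to the glued $\sigma$ recovers $(\sigma_1,\sigma_2)$ and the decomposition $(R_1,R_2)$; conversely, any $\sigma\in S(P\prodh Q,R)$ is reconstructed from its restrictions once one knows (as established above) that $R = \sigma(P) \prodg \sigma(Q)$. No step presents a real obstacle; the only point requiring care is the verification that $R = R_1 \prodg R_2$ in the forward direction, which is exactly where the compatibility between $\prodh$ and $\Delta_\prodg$ is encoded.
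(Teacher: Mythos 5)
Your proof is correct and follows essentially the same route as the paper's: the same bijection $\sigma\mapsto(\sigma_{\mid P},\sigma_{\mid Q})$, the same key verification that $R=\sigma(P)\prodg\sigma(Q)$ using both picture conditions, and the same gluing argument for the inverse.
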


\begin{proof} We take $x=P,y=Q,z=R$ three double posets. Let $f\in S(P\prodh Q,R)$. We put $R_1=f(P)$ and $R_2=f(Q)$.
Let $i\in R_1$ and $j\in R_2$. As $f^{-1}(i)\leq_1 f^{-1}(j)$ by definition of $\prodh$, $i \leq_2 j$ in $R$.
Moreover, as $f^{-1}(i)$ and $f^{-1}(j)$ are not comparable for $\leq_2$ in $P \prodh Q$, necessarily $i$ and $j$ are not comparable for $\leq_1$ in $R$.
So $R=R_1\prodg R_2$. As a consequence, there exists a bijection:
$$\varrho:\left\{\begin{array}{rcl}
S(P\prodh Q,R)&\longrightarrow&\displaystyle\bigcup_{R_1\prodg R_2=R} S(P,R_1)\times S(Q,R_2)\\
f&\longrightarrow&(f_{\mid P},f_{\mid Q})\in S(P,f(P))\times S(Q,f(Q)).
\end{array}\right.$$
It is clearly injective. Let us show it is surjective. If $(g,h)\in S(P,R_1)\times S(Q,R_2)$, with $R=R_1\prodg R_2$,
let us consider the unique bijection $f:P\prodh Q\longrightarrow R$ such that $f_{\mid P}=g$ and $f_{\mid Q}=h$.
If $i\leq_1 j$ in $P\prodh Q$, then $i,j \in P$ or $i,j \in Q$ or $i\in P$ and $j\in Q$, so $g(i)\leq_2 g(j)$ or $h(i)\leq_2 h(j)$
or $f(i) \in R_1$ and $f(i)\in R_2$, so $f(i)\leq_2 f(j)$ in $R$.
If $f(i) \leq_1 f(j)$ in $R$, then $g(i)\leq_1 g(j)$ in $R_1$ or $h(i) \leq_1 h(j)$ in $R_2$, so $i\leq_2 j$ in $P$ or in $Q$, so $i\leq_2 j$ in $P\prodh Q$.
We proved that $f\in S(P\prodh Q,R)$. Finally:
$$\langle P\prodh Q,R\rangle=Card(S(P\prodh Q,R))=\sum_{R_1\prodg R_2=R}Card(S(P,R_1))Card(S(Q,R_2))=\langle P\otimes Q,\Delta_\prodg(R)\rangle.$$
\end{proof}

\subsection{Involution on $\DP$}

{\bf Notation.} We define the following involution:
$$ \iota: \left\{ \begin{array}{rcl}
\DP&\longrightarrow & \DP\\
(P,\leq_1,\leq_2)&\longrightarrow &(P,\leq_2,\leq_1).
\end{array}\right.$$

{\bf Examples.} For plane posets:
$$\begin{array}{rcl|crclc|rcl}
\tun&\longleftrightarrow&\tun&& \tun\tun&\longleftrightarrow&\tdeux&\\
\tun\tun\tun&\longleftrightarrow&\ttroisdeux&& \tun\tdeux&\longleftrightarrow&\ttroisun&& \tdeux\tun&\longleftrightarrow&\ptroisun\\
%\ptroisun&\longleftrightarrow&\tdeux\tun&& \ttroisun&\longleftrightarrow&\tun\tdeux&& \ttroisdeux&\longleftrightarrow&\tun\tun\tun\\
\tun\tun\tun\tun&\longleftrightarrow&\tquatrecinq&& \tun\tun\tdeux&\longleftrightarrow&\tquatrequatre&& \tun\tdeux\tun&\longleftrightarrow&\pquatrehuit\\
\tun\ptroisun&\longleftrightarrow&\tquatredeux&& \tun\ttroisun&\longleftrightarrow&\tquatretrois&& \tun\ttroisdeux&\longleftrightarrow&\tquatreun\\
\tdeux\tun\tun&\longleftrightarrow&\pquatrequatre&& \tdeux\tdeux&\longleftrightarrow&\pquatresept&& \ptroisun\tun&\longleftrightarrow&\pquatredeux\\
\pquatreun&\longleftrightarrow&\ttroisdeux\tun&& \pquatresix&\longleftrightarrow&\pquatrecinq&& \pquatretrois&\longleftrightarrow&\ttroisun\tun\\
%\ttroisun\tun&\longleftrightarrow&\pquatretrois&& \pquatrecinq&\longleftrightarrow&\pquatresix&& \ttroisdeux\tun&\longleftrightarrow&\pquatreun\\
%\pquatredeux&\longleftrightarrow&\ptroisun\tun&& \pquatresept&\longleftrightarrow&\tdeux\tdeux&& \pquatrequatre&\longleftrightarrow&\tdeux\tun\tun\\
%\tquatreun&\longleftrightarrow&\tun\ttroisdeux&& \tquatretrois&\longleftrightarrow&\tun\ttroisun&& \tquatredeux&\longleftrightarrow&\tun\ptroisun\\
%\pquatrehuit&\longleftrightarrow&\tun\tdeux\tun&& \tquatrequatre&\longleftrightarrow&\tun\tun\tdeux&& \tquatrecinq&\longleftrightarrow&\tun\tun\tun\tun
\end{array}$$

\begin{prop} \label{33}
For all $P,P_1,P_2 \in \DP$:
\begin{enumerate}
\item $\iota(P_1\prodg P_2)=\iota(P_1) \prodh \iota(P_2)$ and $\iota(P_1\prodh P_2)=\iota(P_1) \prodg \iota(P_2)$.
\item $P$ is $1$-indecomposable (respectively $2$-indecomposable) if, and only if, $\iota(P)$ is $2$-indecomposable (respectively $1$-indecomposable).
\item $P$ is plane if, and only if, $\iota(P)$ is plane.
\item $P$ is WN if, and only if, $\iota(P)$ is WN.
\end{enumerate}\end{prop}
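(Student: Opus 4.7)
The proposition packages four facts about the involution $\iota$, all of which flow from the symmetric way the definitions treat the two partial orders. My plan is to establish them in the order given, since each subsequent item uses the previous ones.

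For part (1), I would simply unfold Definition~\ref{2}. The product $P_1 \prodg P_2$ is characterized by: underlying set is the disjoint union, the inclusions of $P_1$ and $P_2$ are embeddings of double posets, and for $x\in P_1$, $y\in P_2$ one has $x\leq_2 y$ while $x,y$ are $\leq_1$-incomparable. Applying $\iota$ exchanges the roles of $\leq_1$ and $\leq_2$, turning this condition into exactly the defining condition of $\iota(P_1)\prodh \iota(P_2)$. The second identity is the same statement read backwards, using $\iota^2 = \mathrm{Id}$.

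Part (2) is a direct consequence of (1) combined with the remark following Definition~\ref{5}: $P$ is $1$-indecomposable precisely when it cannot be written nontrivially as $I \prodg J$. Applying $\iota$ and using part (1), such a decomposition exists for $P$ if and only if $\iota(P) = \iota(I)\prodh \iota(J)$ is a nontrivial $\prodh$-decomposition, i.e.\ if and only if $\iota(P)$ fails to be $2$-indecomposable. The parenthetical statement is obtained by swapping $P$ and $\iota(P)$.

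Part (3) is immediate from Definition~\ref{10}, since the condition ``$x$ and $y$ are $\leq_h$-comparable if and only if they are not $\leq_r$-comparable'' is symmetric in the two orders; interchanging them preserves it verbatim.

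Part (4) is the only one requiring a small verification. Using part (3), $\iota$ maps plane posets to plane posets, so it restricts to an involution on $\PP$. It remains to check that $\iota$ preserves the property of avoiding $\pquatrecinq$ and $\pquatresix$ as plane subposets. The key observation, which the paper has already recorded in the table of examples of $\iota$ acting on plane posets, is the identity $\iota(\pquatrecinq) = \pquatresix$ (and hence $\iota(\pquatresix) = \pquatrecinq$). I would verify this directly by writing down the two orders on the four vertices in each case and checking that swapping them realizes the required isomorphism. Since $\iota$ commutes with taking subposets (the partial orders on a subset are just the restrictions, and restriction commutes with swapping the orders), $P$ contains $\pquatrecinq$ or $\pquatresix$ as a plane subposet if and only if $\iota(P)$ does. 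Combined with part (3), this yields (4). The only place any actual work is required is the small case check $\iota(\pquatrecinq) = \pquatresix$; everything else is a formal consequence of the definitions.
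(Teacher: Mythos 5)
Your proposal is correct and matches the paper's argument: the paper dismisses parts (1)--(3) as obvious (exactly the definition-unfolding you spell out) and proves part (4) from the same key fact you identify, namely that $\iota$ permutes $\pquatrecinq$ and $\pquatresix$. You merely make explicit the routine details (such as $\iota$ commuting with restriction to subposets) that the paper leaves implicit.
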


\begin{proof} $1$-$3$ are obvious. The last point comes from the fact that $\iota$ permutes $\pquatresix$ and $\pquatrecinq$. \end{proof}\\

\subsection{Non-degeneracy of the pairing $\langle-,-\rangle$}

Let $P$ be a double poset. We define:
$$\left\{\begin{array}{rcl}
X_P&=&Card(\{(x,y)\in P^2\mid x <_1 y\}),\\
Y_P&=&Card(\{(x,y)\in P^2\mid x <_2 y\}).
\end{array}\right.$$

\begin{lemma}\begin{enumerate}
\item Let $P,Q\in \DP(n)$, such that $\langle P,Q\rangle\neq 0$. Then $X_P\leq X_{\iota(Q)}$ and $Y_P\geq Y_{\iota(Q)}$.
Moreover, if $X_P=X_{\iota(Q)}$ and $Y_P=Y_{\iota(Q)}$, then $P=\iota(Q)$.
\item $S(P,\iota(P))$ is the set of automorphisms of the double poset $P$ (so is not empty).
Moreover, if $P$ is plane, then $S(P,\iota(P))$ is reduced to a single element.
\end{enumerate}\end{lemma}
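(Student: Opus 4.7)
The plan is to exploit the two defining implications of $S(P,Q)$ via a counting argument on strict pairs. For part 1, given any $\sigma \in S(P,Q)$, the first implication sends a strict pair $x <_1 y$ in $P$ to a strict pair $\sigma(x) <_2 \sigma(y)$ in $Q$ (strictness is preserved because $\sigma$ is a bijection with $x \neq y$). Since $\sigma$ is injective on pairs, this furnishes an injection $\{(x,y) \in P^2 : x <_1 y\} \hookrightarrow \{(x',y') \in Q^2 : x' <_2 y'\}$, yielding $X_P \leq Y_Q = X_{\iota(Q)}$. The second implication, read symmetrically through $\sigma^{-1}$, gives $X_Q \leq Y_P$, i.e.\ $Y_{\iota(Q)} \leq Y_P$.

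When both bounds are saturated, these injections between finite sets of equal cardinality become bijections. I then read off the equivalences: $x <_1 y$ in $P$ iff $\sigma(x) <_2 \sigma(y)$ in $Q$, and $\sigma(x) <_1 \sigma(y)$ in $Q$ iff $x <_2 y$ in $P$. Since $\iota(Q)$ has first order $\leq_2^Q$ and second order $\leq_1^Q$, these are exactly the conditions for $\sigma : P \to \iota(Q)$ to be an isomorphism of double posets. Hence $P = \iota(Q)$ as isoclasses.

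For part 2, I unfold the definition: $\sigma \in S(P,\iota(P))$ is a bijection $P \to P$ satisfying $x \leq_1 y \Rightarrow \sigma(x) \leq_1 \sigma(y)$ and $\sigma(x) \leq_2 \sigma(y) \Rightarrow x \leq_2 y$. Applying the same pair-counting argument inside $P$, each implication provides an injection of a finite set of strict pairs into itself, which must be a bijection; so both implications upgrade to equivalences. Therefore $\sigma$ preserves each of $\leq_1$ and $\leq_2$ in both directions, i.e.\ $\sigma$ is an automorphism of the double poset $P$. The reverse inclusion is immediate, and the identity shows the set is nonempty.

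Finally, when $P$ is plane I invoke Proposition \ref{11}: the relation $x \leq y$ iff $x \leq_h y$ or $x \leq_r y$ is a total order on $P$. Any automorphism $\sigma$ preserves $\leq_h$ and $\leq_r$, hence preserves $\leq$; and the only order-preserving permutation of a finite totally ordered set is the identity. Thus $S(P,\iota(P)) = \{\mathrm{id}\}$. The expected main obstacle is merely bookkeeping --- making it explicit that the upgrade from injection to bijection in the equality case operates on strict (not weak) pairs, and correctly tracking the swap of orders in $\iota(Q)$ when assembling the two bijections into a double poset isomorphism.
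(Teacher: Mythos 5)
Your proof is correct and follows essentially the same route as the paper: the injection-of-strict-comparable-pairs argument for the inequalities and the equality case in part 1, the finiteness upgrade of the two implications to equivalences in part 2, and the total order of Proposition \ref{11} to force the unique automorphism of a plane poset to be the identity. You merely make explicit the counting that the paper leaves implicit ("as $P$ is finite, this is in fact an equivalence"), which is fine.
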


\begin{proof} 1. We assume that $S(P,Q) \neq \emptyset$: let us choose $\sigma \in S(P,Q)$.
If $x<_1 y$ in $P$, then $\sigma(x)<_2 \sigma(y)$ in $Q$, so $\sigma(x)<_1 \sigma(y)$ in $\iota(Q)$. As a consequence, 
$X_P\leq X_{\iota(Q)}$. If $x<_2 y$ in $\iota(Q)$, then $x<_1 y$ in $Q$, so
$\sigma^{-1}(x)<_2 \sigma^{-1}(y)$ in $Q$. As a consequence, $Y_{\iota(Q)}\leq Y_P$.\\

Moreover, if $X_P=X_{\iota(P)}$ and $Y_P=Y_{\iota(Q)}$, then $x <_1 y$ in $P$, if, and only if $\sigma(x)<_1\sigma(y)$ in $\iota(P)$;
$x <_2 y$ in $\iota(Q)$ if, and only if, $\sigma^{-1}(x)<_2 \sigma^{-1}(y)$ in $P$. In other terms, $\sigma$ is an isomorphism of double posets
from $P$ to $\iota(Q)$, so $P=\iota(Q)$. \\

2. Let $\sigma \in S(P,\iota(P))$. If $x<_1 y$ in $P$, then $\sigma(x)<_2 \sigma(y)$ in $\iota(P)$, so $\sigma(x)<_1 \sigma(y)$ in $P$.
As $P$ is finite, this is in fact an equivalence. If $\sigma(x)<_2 \sigma(x)$ in $P$, then $\sigma(x)<_1 \sigma(x)$ in $\iota(P)$,
so $x<_2 y$. As $P$ is finite, this is an equivalence. Finally, we obtain that $\sigma$ is an automorphism of $P$.
In the other sense, if $\sigma$ is an automorphism of $P$, it is clear that $\sigma \in S(P,\iota(P))$. \\

Let us assume that $P$ is plane and let us take $\sigma \in S(P,\iota(P))$.
As $\sigma$ is an automorphism, it is increasing for $\leq_h$ and $\leq_r$, so it is also increasing for the total order $\leq$ of proposition \ref{11},
so $\sigma$ is the unique increasing bijection from $P$ to $P$ for $\leq$, that is to say $Id_P$. \end{proof}

\begin{theo}\label{37} \begin{enumerate}
\item $\langle-,-\rangle$ is non-degenerate if, and only if, the characteristic of the base field $K$ is zero.
\item $\langle-,-\rangle_{\mid \h_{\PP}}$ is non-degenerate. 
\item $\langle-,-\rangle_{\mid \h_{\WNP}}$ is non-degenerate. 
\end{enumerate} \end{theo}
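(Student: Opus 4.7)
The plan is to put the pairing matrix in triangular form in each degree $n$ and read off the three statements. Fix $n$ and introduce the auxiliary matrix $M = (M_{P,Q})_{P,Q \in \DP(n)}$ defined by $M_{P,Q} := \langle P, \iota(Q)\rangle$. Since $\iota$ is an involution of the finite set $\DP(n)$, the pairing in degree $n$ is non-degenerate if and only if $M$ is invertible. Applying the preceding lemma with $\iota(Q)$ in place of $Q$ yields: whenever $M_{P,Q} \neq 0$, one has $X_P \leq X_Q$ and $Y_P \geq Y_Q$, with both equalities forcing $P = Q$; and the diagonal entries $M_{P,P} = |S(P, \iota(P))| = |\mathrm{Aut}(P)|$ are positive integers.

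Next I would fix any total order on $\DP(n)$ refining the strict partial order $P \prec Q$ defined by $X_P < X_Q$, or $X_P = X_Q$ and $Y_P > Y_Q$. For this order, $M$ becomes upper triangular: if $P \neq Q$ and $M_{P,Q} \neq 0$, at least one of the two inequalities on $X$ and $Y$ is strict, so $P \prec Q$. Pairs with $(X_P, Y_P) = (X_Q, Y_Q)$ but $P \neq Q$ give $M_{P,Q} = 0$, so the arbitrary tie-breaking in the total order does not affect the triangular structure.

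Parts (2) and (3) follow immediately: by proposition \ref{33} items (3)--(4), the sets $\PP(n)$ and $\WNP(n)$ are stable under $\iota$, so the restriction of $M$ to $\PP(n) \times \PP(n)$ (respectively $\WNP(n) \times \WNP(n)$) governs non-degeneracy of the pairing on $\h_{\PP}(n)$ (respectively $\h_{\WNP}(n)$); by the lemma, $|\mathrm{Aut}(P)| = 1$ for every plane $P$, so these restricted matrices are unitriangular and hence invertible over any $K$.

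For part (1), in characteristic zero every integer $|\mathrm{Aut}(P)|$ is invertible in $K$, so $M$ is triangular with invertible diagonal and the pairing is non-degenerate. Conversely, if $\mathrm{char}(K) = p > 0$, I would take $\wp_p \in \DP(p)$, for which both partial orders are trivial: every bijection is an automorphism, so $|\mathrm{Aut}(\wp_p)| = |\S_p| = p! = 0$ in $K$, and the triangular $M$ in degree $p$ has a zero on its diagonal, hence is singular. The substantive obstacle is arranging the order so that triangularity is visible; once $(X_P, -Y_P)$ linearizes the partial information extracted from the lemma and the sharp equality clause handles the tie-breaking, everything else is formal.
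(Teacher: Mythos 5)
Your proof is correct and follows essentially the same route as the paper: use the lemma on $(X_P,Y_P)$ together with the involution $\iota$ to make the matrix $(\langle P,\iota(Q)\rangle)$ triangular in each degree, with diagonal entries $\mathrm{Card}(\mathrm{Aut}(P))$, equal to $1$ for plane (hence WN) posets, and use $\wp_p$ (the paper uses $\wp_n$) to settle the characteristic-$p$ case. The only differences are cosmetic (upper versus lower triangular, and the explicit remark that ties $(X_P,Y_P)=(X_Q,Y_Q)$ with $P\neq Q$ force a zero entry, which the paper handles implicitly by its choice of total order).
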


\begin{proof} Let us fix $n \in \mathbb{N}$ we choose a total order on $\DP(n)$ such that, for any double posets $P,Q \in \DP(n)$:
$$((X_P,Y_P)\neq(X_Q,Y_Q),\: X_P\leq X_Q\mbox{ and } Y_P\geq Y_Q)\Longrightarrow (P \geq Q).$$
Let $P,Q \in \DP(n)$, such that $\langle P,Q \rangle \neq 0$. Then $X_P\leq X_{\iota(Q)}$ and $Y_P\geq Y_{\iota(Q)}$.
Moreover, if these inequalities are equalities, $P=\iota(Q)$; if $(X_P,Y_P)\neq (X_{\iota(Q)},Y_{\iota(Q)})$, then $P \geq \iota(Q)$
by choice of the order on $\DP(n)$. In both cases, $P \geq \iota(Q)$. \\

We index the elements of $\DP(n)$ such that $\iota(P_1)<\ldots <\iota(P_r)$. Then the matrix of $\langle-,-\rangle_{\mid \h_{\DP}(n)}$
in the bases $((\iota(P_1),\ldots,\iota(P_r))$ and $(P_1,\ldots,P_r)$ is lower triangular, with diagonal coefficients $\langle P,\iota(P)\rangle$
for $P \in \DP(n)$. So it is invertible if, and only if, $\langle P,\iota(P)\rangle$ is a non-zero element of $K$ for all $P\in \DP(n)$.
Hence, $\langle-,-\rangle$ is non-degenerate if, and only if, $\langle P,\iota(P)\rangle=Card(Aut(P))$ is a non-zero element of $K$ for all $P \in \DP$.\\

1. For all $n\in \mathbb{N}$,  $Aut(\wp_n)=\S_{\wp_n}$, so $\langle \wp_n,\iota(\wp_n)\rangle=n!$. 
Hence, $\langle-,-\rangle$ is non-degenerate if, and only if, $K$ is of characteristic zero.\\

2. As the set of plane poset is stable by $\iota$, we obtain that $\langle-,-\rangle_{\mid \h_{\PP}}$ is non-degenerate if, and only if,
$Card(Aut(P))\neq 0$ for all $P \in \PP$. As $Card(Aut(P))=1$ if $P$ is plane, this condition is statisfied. \\

3. Similar proof. \end{proof}\\

{\bf Remarks.} \begin{enumerate}
\item Note that $\h_{\DP}$ is self-dual, even if $K$ is not of characteristic zero, see corollary \ref{29}.
\item We could work over any commutative ring $R$, instead of a field $K$. Then it is possible to prove similarly that
$\langle-,-\rangle$ is non degenerate if, and only if, $\mathbb{Q} \subseteq R$.
\end{enumerate}

\section{Operad of WN double posets}

\subsection{An alternative description of free $2$-$As$ algebras}

The algebra of WN posets $\h_{\WNP}$ is given a coproduct, $\Delta$, and two products, $\prodg$ and $\prodh$.
Identifying $\h_{\WNP}$ and its dual (via the identification of the basis of WN posets with its dual basis), 
we can give $\h_{\WNP}$ another product $\star=\Delta^*$, defined by:
$$P\star Q=\sum_{R\in \WNP} n(P,Q;R) R,$$
where $n(P,Q;R)$ is the number of ideals $I$ of $R$ such that $P=R\setminus I$ and $Q=I$.

We also give it the coproduct $\Delta_{\prodg}=\prodg^*$, defined by:
$$\Delta(P)=\sum_{Q\prodg R=P}Q \otimes R.$$
Then $(\h_{\WNP},\star,\prodg,\Delta_\prodg)$ is a $2$-As Hopf algebra, that is to say:
\begin{itemize}
\item $(\h_{\WNP},\star,\Delta_\prodg)$ is a Hopf algebra. Identifying the basis $\WNP$ and its dual basis, it is the graded dual of $(\h_{\WNP},\prodg,\Delta)$.
\item $(\h_{\WNP},\prodg,\Delta_\prodg)$ is an infinitesimal Hopf algebra.  Identifying the basis $\WNP$ and its dual basis, it is self-dual.
\end{itemize}
Moreover, the space of primitive elements of $(\h_{\WNP},\Delta_\prodg)$ is generated by the set of $h$-connected WN posets $\WNP_h$.\\

{\bf Examples.} \begin{eqnarray*}
\tun \star \tdeux&=&\tun\tdeux+\tdeux \tun+2\ptroisun+\ttroisdeux\\
\tdeux \star \tun&=&\tun\tdeux+\tdeux \tun+2\ttroisun+\ttroisdeux\\
\tdeux \star \tdeux&=&2\tdeux\tdeux+\pquatresix+\pquatrecinq+\pquatredeux+\pquatretrois+2\pquatrehuit+\tquatrecinq
\end{eqnarray*}

\begin{prop}
Let $\phi:(\h_{\WNP},\prodg,\prodh)\longrightarrow (\h_{\WNP},\star,\prodg)$ be the unique morphism of $2$-$As$ algebras sending $\tun$ on $\tun$.
Then, for any WN poset $P$:
$$\phi(P)=\sum_{Q\in \WNP} \langle P,Q\rangle Q.$$
\end{prop}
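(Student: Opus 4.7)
Define the linear map
$$\psi:\h_{\WNP}\longrightarrow \h_{\WNP},\qquad \psi(P)=\sum_{Q\in\WNP}\langle P,Q\rangle\, Q.$$
Since $\h_{\WNP}$ is the free $2$-$As$ algebra on the single generator $\tun$ (theorem \ref{25}), the plan is to show that $\psi$ is a morphism of $2$-$As$ algebras from $(\h_{\WNP},\prodg,\prodh)$ to $(\h_{\WNP},\star,\prodg)$ sending $\tun$ to $\tun$; the uniqueness clause defining $\phi$ will then force $\psi=\phi$.

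First I would check the generator: $\langle \tun,Q\rangle=\mathrm{card}(S(\tun,Q))$ vanishes unless $Q=\tun$, and equals $1$ in that case, so $\psi(\tun)=\tun$. The condition $\psi(1)=1$ is similar. Next, the compatibility $\psi(P\prodh P')=\psi(P)\prodg\psi(P')$ would follow by a short computation using the proposition $\langle x\prodh y,z\rangle=\langle x\otimes y,\Delta_\prodg(z)\rangle$ already established: one writes
$$\psi(P\prodh P')=\sum_R\langle P\otimes P',\Delta_\prodg(R)\rangle R=\sum_R\sum_{Q\prodg Q'=R}\langle P,Q\rangle\langle P',Q'\rangle\, Q\prodg Q',$$
and recognises the right-hand side as $\psi(P)\prodg\psi(P')$.

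For the other compatibility $\psi(P\prodg P')=\psi(P)\star\psi(P')$, I would dualise in the analogous way. Using that $\langle-,-\rangle$ is a Hopf pairing for $(\h_{\DP},\prodg,\Delta)$,
$$\langle P\prodg P',R\rangle=\langle P\otimes P',\Delta(R)\rangle=\sum_{I\text{ ideal of }R}\langle P,R\setminus I\rangle\langle P',I\rangle.$$
On the other hand, by definition of $\star$ as the dual of $\Delta$,
$$\psi(P)\star\psi(P')=\sum_{Q,Q'}\langle P,Q\rangle\langle P',Q'\rangle\, Q\star Q'=\sum_R\Bigl(\sum_{I\text{ ideal of }R}\langle P,R\setminus I\rangle\langle P',I\rangle\Bigr) R,$$
and these two expressions match coefficient by coefficient in $R$.

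The main (minor) obstacle is keeping the two dual structures straight: the source uses $(\prodg,\prodh)$ while the target uses $(\star,\prodg)$, so one must verify that the first product of the source is intertwined with the first product of the target via the pairing, and similarly for the second. Once the two Hopf/infinitesimal pairing identities recalled above are applied in the right order, $\psi$ is a $2$-$As$ morphism with $\psi(\tun)=\tun$, and theorem \ref{25} gives $\psi=\phi$.
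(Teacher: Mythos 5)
Your proposal is correct and follows essentially the same route as the paper: define the linear map $P\mapsto\sum_Q\langle P,Q\rangle Q$, verify it sends $\tun$ to $\tun$, use the Hopf-pairing identity $\langle P\prodg P',R\rangle=\langle P\otimes P',\Delta(R)\rangle$ together with $\star=\Delta^*$ for the first product and $\langle P\prodh P',R\rangle=\langle P\otimes P',\Delta_\prodg(R)\rangle$ together with $\Delta_\prodg=\prodg^*$ for the second, then invoke the freeness of $\h_{\WNP}$ as a $2$-$As$ algebra on $\tun$ to conclude the map equals $\phi$. The paper's proof performs exactly these computations, so no further comment is needed.
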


\begin{proof} For any double poset $P$, there is only a finite number of double posets $Q$ such that $\langle P,Q\rangle \neq 0$
(as if it is the case, $Q$ and $P$ must have the same number of vertices). We can define a linear map:
$$\varphi:\left\{ \begin{array}{rcl}
\h_{\WNP}&\longrightarrow&\h_{\WNP}\\
P&\longrightarrow&\displaystyle \sum_{Q\in \WNP} \langle P,Q\rangle Q.
\end{array}\right.$$
It is clear that $\varphi(\tun)=\tun$. If $P_1$ and $P_2$ are two WN posets:
\begin{eqnarray*}
\varphi(P_1\prodg P_2)&=&\sum_{Q \in \WNP}\langle P_1\prodg P_2,Q\rangle Q\\
&=&\sum_{Q\in \WNP}\langle P_1 \otimes P_2,\Delta(Q)\rangle Q\\
&=&\sum_{Q_1,Q_2 \in \WNP}\langle P_1,Q_1\rangle \langle P_2,Q_2\rangle Q_1 \star Q_2\\
&=&\varphi(P_1)\star \varphi(P_2);\\ \\
\varphi(P_1\prodh P_2)&=&\sum_{Q \in \WNP}\langle P_1\prodh P_2,Q\rangle Q\\
&=&\sum_{Q\in \WNP}\langle P_1 \otimes P_2,\Delta_\prodg (Q)\rangle Q\\
&=&\sum_{Q_1,Q_2 \in \WNP}\langle P_1,Q_1\rangle \langle P_2,Q_2\rangle Q_1 \prodg Q_2\\
&=&\varphi(P_1)\prodg \varphi(P_2).
\end{eqnarray*}
So $\varphi=\phi$. \end{proof}\\

{\bf Remarks.} \begin{enumerate}
\item As a consequence: 
$$\phi\circ \iota(P)=\sum_{Q \in \WNP} n(P,Q) Q,$$
where $n(P,Q)$ is the number of bijections $f:P\longrightarrow Q$, such that $f$ is increasing for $\leq_h$ and $f^{-1}$ is increasing for $\leq_r$. 
Moreover, $\phi \circ \iota$ is the unique morphism of $2$-$As$ algebras from $(\h_{\WNP},\prodg,\prodh)$ to $(\h_{\WNP},\prodg,\star)$ sending $\tun$ to $\tun$.
\item As $(\h_{\WNP},\prodg,\prodh,\Delta)$ and $(\h_{\WNP},\star,\prodg,\Delta_\prodg)$ are two $2$-$As$ Hopf algebras, $\phi$ also satisfies the assertion
$\Delta_\prodg\circ \phi=(\phi \otimes \phi) \circ \Delta$.
\end{enumerate}

\begin{cor}
The morphism $\phi$ is bijective. As a consequence, $(\h_{\WNP},\star,\prodg)$ is freely generated, as a $2$-As algebra, by $\tun$.
\end{cor}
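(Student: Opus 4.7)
The plan is to read off bijectivity of $\phi$ directly from the explicit pairing formula just proved, together with the non-degeneracy statement of Theorem \ref{37}, and then transport freeness across the isomorphism using Theorem \ref{25}.

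First I would observe that $\phi$ is homogeneous of degree $0$. Indeed, from $\phi(P)=\sum_{Q\in\WNP}\langle P,Q\rangle Q$ and the fact that $\langle P,Q\rangle=|S(P,Q)|$ counts bijections $P\to Q$, only WN posets $Q$ with $|Q|=|P|$ contribute; hence $\phi$ restricts to a linear endomorphism $\phi_n$ of the finite-dimensional graded piece $\h_{\WNP}(n)$ for every $n\geq 0$. In the basis $\WNP(n)$, the matrix of $\phi_n$ is precisely the Gram matrix of the restriction of $\langle-,-\rangle$ to $\h_{\WNP}(n)$.

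Next, I would invoke Theorem \ref{37}(3): the pairing $\langle-,-\rangle_{\mid\h_{\WNP}}$ is non-degenerate. Because the pairing is homogeneous (the Gram matrix is block-diagonal with respect to the grading), non-degeneracy means exactly that each Gram block, i.e. the matrix of $\phi_n$, is invertible. Therefore every $\phi_n$ is a linear isomorphism, and hence $\phi$ itself is bijective.

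For the consequence, I would appeal to Theorem \ref{25}, which says that $(\h_{\WNP},\prodg,\prodh)$ is the free $2$-$As$ algebra on the single generator $\tun$. Since $\phi:(\h_{\WNP},\prodg,\prodh)\longrightarrow(\h_{\WNP},\star,\prodg)$ is now an isomorphism of $2$-$As$ algebras satisfying $\phi(\tun)=\tun$, freeness transports across it: for any $2$-$As$ algebra $(A,.,*)$ and any $a\in A$, the composition of $\phi^{-1}$ with the unique morphism from $(\h_{\WNP},\prodg,\prodh)$ to $(A,*,.)$ sending $\tun\mapsto a$ gives the unique morphism from $(\h_{\WNP},\star,\prodg)$ to $(A,.,*)$ sending $\tun\mapsto a$. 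Thus $(\h_{\WNP},\star,\prodg)$ is freely generated by $\tun$.

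There is essentially no obstacle here beyond checking that everything lines up: all the nontrivial work has been done in Theorems \ref{25} and \ref{37} and in the explicit formula for $\phi$. The only point requiring care is confirming grading-preservation, so that non-degeneracy of the pairing can be read as invertibility of $\phi_n$ on each finite-dimensional block; this follows instantly from the combinatorial definition of $S(P,Q)$.
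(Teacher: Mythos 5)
Your proof is correct and is essentially the paper's own argument: homogeneity of $\phi$, identification of its matrix on each graded component $\h_{\WNP}(n)$ with the Gram matrix of the pairing in the basis of WN posets, invertibility from the non-degeneracy of $\langle-,-\rangle_{\mid \h_{\WNP}}$ (theorem \ref{37}), and then transport of freeness from theorem \ref{25} through the isomorphism, which is exactly the intended ``consequence''. The only slip is notational: the intermediate morphism should land in $(A,.,*)$ (sending $\prodg$ to $.$ and $\prodh$ to $*$), not in $(A,*,.)$, so that the composite with $\phi^{-1}$ sends $\star$ to $.$ and $\prodg$ to $*$; this bookkeeping detail does not affect the argument.
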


\begin{proof} The morphism $\phi$ is homogeneous. Let us fix an integer $n \in \mathbb{N}$.
The matrix of the restriction $\phi:(\h_{\WNP})_n\longrightarrow (\h_{\WNP})_n$ in the basis of WN posets of degree $n$ is given by 
the matrix in the same basis of the pairing $\langle-,-\rangle_{\mid (\h_{\WNP})_n}$. As the pairing $\langle-,-\rangle_{\mid \h_{\WNP}}$
is non-degenerate (theorem \ref{37}), this matrix is invertible, so $\phi$ is an isomorphism. \end{proof}

\subsection{The $B_\infty$-algebra of connected WN posets}

As a consequence, the space $Prim(\h_{\WNP})=vect(\WNP_h)$ inherits a structure $[-;-]_{m,n}$ of $B_\infty$-algebra, 
defined for all $m,n \in \mathbb{N}^*$ by:
$$\xymatrix{Prim(\h_{\WNP})^{\otimes m}\otimes Prim(\h_{\WNP})^{\otimes n}\ar[r]\ar[d]_{m_\star}&Prim(\h_{\WNP})\\
\h_{\WNP}\ar[ur]_\pi&}$$
where $\pi$ is the canonical projection on $Prim(\h_{\WNP})$ and $m_\star$ is defined by:
$$m_\star : \left\{ \begin{array}{rcl}
Prim(\h_{\WNP})^{\otimes m}\otimes Prim(\h_{\WNP})^{\otimes n}&\longrightarrow &\h_{\WNP} \\
(P_1\otimes \cdots \otimes P_m) \otimes (Q_1\otimes \cdots \otimes Q_n)&\longrightarrow &
(P_1\prodg \cdots \prodg P_m) \star (Q_1\prodg \cdots \prodg Q_n)
\end{array}\right.$$
Hence, for all $P_1,\cdots,P_m,Q_1,\cdots,Q_n \in \WNP_h$:
$$[ P_1,\cdots,P_m;Q_1,\cdots,Q_n]=\sum_{R\in \WNP_h} n(P_1\ldots P_m, Q_1\ldots Q_n;R)R.$$
For example, $[ \tun,\cdots \tun;\tun,\cdots,\tun]_{p,q}=\tun^p \prodh \tun^q$.

\begin{theo}
Let $B$ be a $B_\infty$-algebra and let $x \in B$. There exists a unique $B_\infty$-algebra morphism
$\phi:Prim(\h_{\WNP})\longrightarrow B$, sending $\tun$ to $x$.
In other terms, $Prim(\h_{\WNP})$ is the free $B_\infty$ algebra generated by $\tun$.
\end{theo}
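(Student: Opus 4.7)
The plan is to combine the preceding corollary---which identifies $(\h_{\WNP},\star,\prodg)$ as the free $2$-$As$ algebra on the single generator $\tun$---with the Loday--Ronco correspondence \cite{Loday,Loday2} between cofree $2$-$As$ Hopf algebras and $B_\infty$-algebras on their spaces of primitives. The relevant setup on our side is already in place: $(\h_{\WNP},\prodg,\Delta_\prodg)$ is an infinitesimal Hopf algebra, so $(\h_{\WNP},\Delta_\prodg)$ is cofree as a coalgebra, cogenerated by $\mathrm{Prim}(\h_{\WNP},\Delta_\prodg)=\mathrm{vect}(\WNP_h)$, and $\tun$ is itself primitive.

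For existence, given a $B_\infty$-algebra $B$ and $x\in B$, I would form the associated $2$-$As$ Hopf algebra $T(B)$ described in the introduction (with concatenation product, deconcatenation coproduct $\Delta$, and second product $\star_B$ built from the brackets $[-;-]_{m,n}$); it is cofree as a coalgebra, cogenerated by $B=\mathrm{Prim}(T(B),\Delta)$, so that $x\in B\subseteq T(B)$ is primitive. The universal property of $(\h_{\WNP},\star,\prodg)$ then produces a unique morphism of $2$-$As$ algebras $\Phi:\h_{\WNP}\to T(B)$ with $\Phi(\tun)=x$. By the Loday--Ronco rigidity theorem, because $\Phi$ sends the primitive generator $\tun$ to the primitive element $x$ inside a cofree $2$-$As$ Hopf algebra, $\Phi$ is automatically a morphism of $2$-$As$ Hopf algebras, and hence its restriction $\phi:=\Phi\big|_{\mathrm{vect}(\WNP_h)}$ takes values in $B$. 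Since on both sides the brackets are read off from $\star$, resp.\ $\star_B$, composed with the projection onto primitives (this is exactly how $[-;-]_{m,n}$ is defined just above the theorem via $m_\star$), the map $\phi$ is a $B_\infty$-morphism with $\phi(\tun)=x$.

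For uniqueness, conversely any $B_\infty$-morphism $\phi:\mathrm{vect}(\WNP_h)\to B$ with $\phi(\tun)=x$ extends to a morphism of $2$-$As$ Hopf algebras $\widetilde{\Phi}:\h_{\WNP}\to T(B)$: on a product $P_1\prodg\cdots\prodg P_k$ with $P_i\in\WNP_h$, $\widetilde{\Phi}$ is forced to equal $\phi(P_1)\otimes\cdots\otimes\phi(P_k)$, the unique extension of $\phi$ as a morphism of cofree coalgebras, and one checks that this map intertwines $\star$ and $\star_B$ directly from the definition of the brackets via $m_\star$. Since $\widetilde{\Phi}(\tun)=x$, the uniqueness clause of the preceding corollary pins down $\widetilde{\Phi}$, hence $\phi$. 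The main obstacle is the rigidity statement used in the existence step---namely that a morphism of $2$-$As$ algebras between cofree $2$-$As$ Hopf algebras sending a primitive generator to a primitive element is automatically a coalgebra morphism---which I would cite from \cite{Loday,Loday2} rather than reprove; with this input, everything else reduces to assembling pieces already established in the previous sections.
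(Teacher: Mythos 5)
Your proposal is correct and follows essentially the same route as the paper: both construct the $2$-$As$ Hopf algebra $T(B)$ from the $B_\infty$-structure, use the freeness of $(\h_{\WNP},\star,\prodg)$ on $\tun$ and restrict the resulting morphism to primitives for existence, and for uniqueness extend a given $B_\infty$-morphism to a coalgebra morphism into the cofree $T(B)$ and invoke the uniqueness clause of the universal property. The only cosmetic difference is that you cite the Loday--Ronco rigidity statement to get compatibility with the coproducts, where the paper asserts the same fact by speaking directly of a unique $2$-$As$ Hopf algebra morphism and checking the rest by a diagram chase.
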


\begin{proof} This result is proved in \cite{Loday2}. We here give a complete proof for the reader's convenience.\\

 \emph{ Existence.} By definition of a $B_\infty$-algebra, the tensor coalgebra $T(B)$ is given a structure of Hopf algebra
via the product $\star_B$, defined as the unique coalgebra morphism $\star_B:T(B)\otimes T(B) \longrightarrow T(B)$,
such that for all $m,n \in \mathbb{N}^*$, for all $x_1,\cdots,x_m,y_1,\cdots,y_n \in B$:
$$\pi_B((x_1\otimes \cdots \otimes x_m) \star_B (y_1\otimes \cdots \otimes y_n))=[ x_1,\cdots,x_m;y_1,\cdots,y_n]_B,$$
where $\pi:T(B)\longrightarrow B$ is the canonical projection.
As a consequence, denoting by $\prodg_B$ the concatenation product of $T(B)$,
$(T(B),\star_B,\prodg_B,\Delta)$ is a $2$-As Hopf algebra. As $x \in B=Prim(T(B))$, there exists a unique morphism $\psi$
of $2$-As Hopf algebra from $\h_{\WNP}$ to $B$, sending $\tun$ to $x$. We consider the diagram:
$$\xymatrix{&\h_{\WNP}\otimes \h_{\WNP} \ar[r]^{\psi \otimes \psi}\ar[d]^{B_\infty}\ar[ldd]_{\star}&T(B)\otimes T(B)\ar[d]_{B_\infty}\ar[rdd]^{\star_B}&\\
&Prim(\h_{\WNP})\ar[r]^\psi&B&\\
\h_{\WNP}\ar[ru]_\pi\ar[rrr]_\psi&&&T(B)\ar[lu]^{\pi_B}}$$
The two triangles commute; the external diagram commutes as $\psi$ is a morphism of $2$-As algebras;
the trapeze also commutes. As a consequence, the rectangle commutes, so $\psi:Prim(\h_{\WNP})\longrightarrow B$
(well-defined as $\psi$ is a morphism of coalgebras) is a morphism of $B_\infty$ algebras, sending $\tun$ to $x$.\\

\emph{ Unicity.} If $\psi'$ is another $B_\infty$ algebra morphism sending $\tun$ to $x$, then the following diagram commutes:
$$\xymatrix{&\h_{\WNP}\otimes \h_{\WNP} \ar[r]^{T(\psi') \otimes T(\psi')}\ar[d]^{B_\infty}\ar[ldd]_{\star}&
T(B)\otimes T(B)\ar[d]_{B_\infty}\ar[rdd]^{\star_B}&\\
&Prim(\h_{\WNP})\ar[r]^{\psi'}&B&\\
\h_{\WNP}\ar[ru]_\pi&&&T(B)\ar[lu]^{\pi_B}}$$
By the universal property of the coalgebra $T(B)$, there exists a unique coalgebra morphism
$\Psi'$, making the diagram commuting:
$$\xymatrix{&\h_{\WNP}\otimes \h_{\WNP} \ar[r]^{T(\psi') \otimes T(\psi')}\ar[d]^{B_\infty}\ar[ldd]_{\star}&
T(B)\otimes T(B)\ar[d]_{B_\infty}\ar[rdd]^{\star_B}&\\
&Prim(\h_{\WNP})\ar[r]^{\psi'}&B&\\
\h_{\WNP}\ar[ru]_\pi\ar[rrr]_{\Psi'}&&&T(B)\ar[lu]^{\pi_B}}$$
So $\Psi'$ is a morphism of $2$-As algebra. By the universal property of $\h_{\WNP}$ (unicity),
$\Psi'=\psi$ defined earlier. So, considering the trapeze, $\psi'=\psi_{\mid Prim(\h_{\WNP})}$. \end{proof}\\

{\bf Remark.} We can similarly describe the free $B_\infty$ algebra generated by a set $\D$, using double posets
decorated by $\D$, that is to say couples $(P,d)$, where $P$ is a double poset and $d$ is a map from $P$ to $\D$.

\subsection{A combinatorial description of the $2$-As operad}

\begin{defi}\textnormal{\begin{enumerate}
\item Let $P \in \WNP$ and let $Q \subseteq P$. We shall say that $Q$ is a \emph{ complete} subposet of $P$
if for $x,z \in Q$, $y\in P$, ($x \leq_h y\leq_h z$ $\Longrightarrow$ $y\in Q$)
and ($x \leq_r y\leq_r z$ $\Longrightarrow$ $y\in Q$). In other terms, a complete subposet is stable under intervals for $\leq_h$ and $\leq_r$.
\item Let $P$ and $Q$ be elements of $\WNP$. Let $(P_i)_{i\in Q}$ be a family of elements of $\WNP$ indexed by the elements of $Q$.
We shall say that it is a $Q$-family of $P$ if:
\begin{itemize}
\item For all $i\in Q$, $P_i$ is a complete subposet of $P$.
\item $P$ is the disjoint union of the $P_i$'s.
\item For all $i\neq j$ in $Q$, $i\leq_h j$ in $Q$ if, and only if, there exists $x_i\in P_i$, $x_j \in P_j$,
$x_i \leq_h x_j$ in $P$.
\item For all $i\neq j$ in $Q$, $i\leq_r j$ in $Q$ if, and only if, for all $x_i\in P_i$, $x_j \in P_j$,
$x_i \leq_r x_j$ in $P$.
\end{itemize}
\item We shall denote by $n_Q(P_1,\cdots,P_k;P)$ the number of $Q$-families $(P_i')_{i\in Q}$ of $P$,
such that $P'_i=P_i$ for all $i\in Q$.
\end{enumerate}}\end{defi}

{\bf Remark.} These concepts can be generalized to decorated double posets. \\

{\bf Notations.} Let $\D$ be a set. We denote by $\WNP^\D$ the set of WN posets decorated by $\D$, that is to say couples $(P,d)$,
where $P$ is a WN poset and $d:P\longrightarrow \D$ a map.

\begin{prop}
Let $(p_d)_{d\in \D}$ be a family of elements of $\WNP^{\D'}$. We consider the following map:
$$\Xi  : \left\{ \begin{array}{rcl}
(\h^\D_{\WNP},\star,\prodg)&\longrightarrow &(\h^{\D'}_{\WNP},\star,\prodg)\\
Q \in \WNP^\D&\longrightarrow &\displaystyle \sum_{P\in \WNP^{\D'}} n_{\overline{Q}}(P_{d_1},\cdots,P_{d_k};P)P,
\end{array}\right.$$
where $\overline{Q}$ is the non-decorated double poset subjacent to $Q$ and $d_i$ is the decoration of the $i$-th element of $Q$
for all $i\in Q$. Then $\Xi$ is the unique morphism of $2$-As algebra which sends $\tun_r$ on $p_r$ for all $d\in \D$.
\end{prop}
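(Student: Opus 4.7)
The plan is to prove uniqueness by freeness, and existence by unpacking the combinatorial definition of $\Xi$ and matching it against the two products $\prodg$ and $\star$.

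\emph{Uniqueness.} First I would extend Theorem \ref{25} to the decorated setting: exactly as in the proof of Theorem \ref{9}, one shows that the algebra $(\h^\D_{\WNP},\star,\prodg)$ is freely generated, as a $2$-$As$ algebra, by the family $\{\tun_d\mid d\in\D\}$. Indeed, Proposition \ref{24} applies to any decorated WN poset (the decoration plays no role in the $h$- or $r$-connectedness) and yields a unique decomposition of every $P\in\WNP^\D\setminus\{1\}$ either as a non-trivial $\prodg$-product of $h$-connected pieces or as a non-trivial $\star$-product (i.e.~$\prodh$ on the dual side) of $r$-connected pieces, with the single-vertex decorated posets $\tun_d$ as atoms. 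Uniqueness of $\Xi$ then follows immediately: any $2$-$As$ algebra morphism is forced on the generators, and hence everywhere.

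\emph{Existence: the generators.} If $Q=\tun_d$, then $\overline{Q}$ is a single vertex, the only $\overline{Q}$-family of $P$ consists of taking $P$ itself as the unique block $P_1$, and the constraint $P_1=p_d$ forces $P=p_d$; hence $\Xi(\tun_d)=p_d$, as required.

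\emph{Existence: compatibility with $\prodg$.} Given $Q_1,Q_2\in\WNP^\D$, I would establish a bijection between $\overline{Q_1\prodg Q_2}$-families of a decorated WN poset $P$ and pairs $(\mathcal{F}_1,\mathcal{F}_2)$ of $\overline{Q_i}$-families of decorated WN posets $P^{(1)},P^{(2)}$ such that $P=P^{(1)}\prodg P^{(2)}$. The key point is that in $\overline{Q_1\prodg Q_2}$, elements of $Q_1$ are strictly $\leq_r$-below elements of $Q_2$ and $\leq_h$-incomparable to them; by the ``family'' clauses, the union $P^{(1)}:=\bigsqcup_{i\in Q_1}P_i$ and $P^{(2)}:=\bigsqcup_{i\in Q_2}P_i$ must satisfy $P=P^{(1)}\prodg P^{(2)}$, which gives the bijection. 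Summing the identity $n_{\overline{Q_1\prodg Q_2}}(\vec P;P)=\sum_{P=P^{(1)}\prodg P^{(2)}} n_{\overline{Q_1}}(\vec P^{(1)};P^{(1)})\,n_{\overline{Q_2}}(\vec P^{(2)};P^{(2)})$ over $P$ yields $\Xi(Q_1\prodg Q_2)=\Xi(Q_1)\prodg\Xi(Q_2)$.

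\emph{Existence: compatibility with $\star$ (the main obstacle).} The hard part is the analogous identity $\Xi(Q_1\star Q_2)=\Xi(Q_1)\star\Xi(Q_2)$, because $\star$ is defined through ideals rather than through the simple $\prodg$-type splitting. I would prove a bijection between
\begin{itemize}
\item triples $(R,I,\mathcal{F})$ where $I$ is an ideal of $R\in\WNP^\D$ with $R\setminus I=Q_1$ and $I=Q_2$, and $\mathcal{F}=(P_i)_{i\in R}$ is an $\overline{R}$-family of $P$;
\item triples $(J,\mathcal{F}_1,\mathcal{F}_2)$ where $J$ is an ideal of $P$, $\mathcal{F}_1$ is a $\overline{Q_1}$-family of $P\setminus J$, and $\mathcal{F}_2$ is a $\overline{Q_2}$-family of $J$.
\end{itemize}
In one direction, given $(R,I,\mathcal{F})$, take $J:=\bigsqcup_{i\in I}P_i$ and restrict $\mathcal{F}$ to $Q_2=I$ and to $Q_1=R\setminus I$; since $I$ is an ideal of $R$ and the family is compatible with $\leq_h$, the set $J$ is an ideal of $P$. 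Conversely, given $(J,\mathcal{F}_1,\mathcal{F}_2)$, the combined family partitions $P$ into blocks indexed by $Q_1\sqcup Q_2$; endowing this index set with the orders induced by comparing representative elements across blocks, one obtains a decorated poset $R$ with $I:=Q_2$ an ideal (the ideal property for $R$ follows from the ideal property of $J$ in $P$, together with the ``family'' clauses that translate element-wise comparisons into block-wise ones), and the WN property for $R$ follows from that of $P$. The main technical verification is checking that this recovered $R$ is indeed WN and that $I$ is an ideal of $R$, using complete subposets and the characterization of the order $\leq_h$ on the block indices. Summing over $P$ gives the desired identity, completing the proof that $\Xi$ is a $2$-$As$ algebra morphism.
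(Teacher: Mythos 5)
Your architecture follows the paper's proof closely: evaluation on the generators, a block-splitting bijection for $\prodg$, a double-counting bijection for $\star$ whose delicate point is an ideal property, and uniqueness via freeness of the decorated algebra on the $\tun_d$'s. Two of your steps, however, do not hold up as written. First, the uniqueness step: you justify freeness of $(\h^\D_{\WNP},\star,\prodg)$ on the $\tun_d$'s by invoking Proposition \ref{24} and a ``decomposition as a non-trivial $\star$-product of $r$-connected pieces''. But the decomposition of a WN poset into $r$-connected components is a $\prodh$-decomposition, and $\star\neq\prodh$ (already $\tun\star\tun=2\,\tun\tun+\tdeux$); no WN poset of size $\geq 2$ is literally a $\star$-product of basis elements. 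In the paper, freeness of $(\h_{\WNP},\star,\prodg)$ on $\tun$ is deduced from Theorem \ref{25} only through the isomorphism $\phi$, whose bijectivity rests on the non-degeneracy of the pairing (Theorem \ref{37}); you need the decorated analogue of that corollary (or a separate triangularity argument for $\star$), not Proposition \ref{24} alone.

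Second, and more seriously, the compatibility with $\star$. Like the paper, you only really control the forward map (blocks of an ideal give an ideal of the target); the inverse construction you sketch --- ``endowing this index set with the orders induced by comparing representative elements'' and asserting that the recovered index poset is WN and that the blocks are complete --- is exactly where the argument fails with the definitions as stated. Take $Q_1=\tun_1$, $Q_2=\tun_2$, $p_1=\tun\tun$, $p_2=\tun$, and target $P=\tun\tun\tun$ with elements $x\leq_r y\leq_r z$. The triple $(J=\{y\},\ \mathcal{F}_1=(\{x,z\}),\ \mathcal{F}_2=(\{y\}))$ belongs to your second set, but $\{x,z\}$ is not a complete subposet of $P$, and the two blocks admit no cross $\leq_h$-relation and no all-pairs $\leq_r$-relation in either direction, so the ``recovered'' two-element index poset is $\wp_2$, which is not even plane: this triple has no preimage, and the claimed bijection breaks. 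Concretely, with the stated definition of $\overline{Q}$-family the coefficient of $\tun\tun\tun$ in $\Xi(\tun_1\star\tun_2)=\Xi(\tun_1\tun_2)+\Xi(\tun_2\tun_1)+\Xi(\tddeux{$1$}{$2$})$ is $1+1+0=2$, whereas its coefficient in $\Xi(\tun_1)\star\Xi(\tun_2)=(\tun\tun)\star\tun$ is $3$. So the key identity cannot be obtained the way you (and, in the same terse form, the paper) state it: the notion of family has to be weakened (dropping completeness and replacing the ``there exists'' characterization of $\leq_h$ by the one-sided conditions actually forced by the ideal and by the failure of the all-pairs $\leq_r$ conditions), or the statement reformulated, before a bijective proof of $\Xi(Q_1\star Q_2)=\Xi(Q_1)\star\Xi(Q_2)$ can go through.
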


{\bf Notations.} For all $Q\in \WNP(k)$, $P_1,\cdots,P_k \in \WNP^\D$, we put:
$$\F_{P_1,\cdots,P_k}^Q=\{(P,F)\:/\: P\in \WNP^\D, 
\mbox{$F=(P'_1,\cdots,P'_k) $ is a $Q$-family of $P$ such that $P_i'=P_i$ for all $i$}\}.$$

\begin{proof} For all $d\in \D$:
$$\Xi(\tun_r)=\sum_{P\in \WNP^\D} n_{\tun}(P_r;P)P=\sum_{P\in \WNP^\D} \delta_{P_r,P}P=P_r.$$
Let $Q_1,Q_2 \in \WNP$. We denote by $d_1,\cdots,d_{k_1}$ the decorations of the elements of $Q_1$,
$d_{k_1+1},\cdots,d_{k_1+k_2}$ the decorations of the elements of $Q_2$. Then:
$$\Xi(Q_1\prodg Q_2)=\sum_{(P,F)\in \F_{P_{d_1},\cdots, P_{d_{k_1}+d_{k_2}}}^{\overline{Q_1}\prodg \overline{Q_2}}}P.$$
There is an immediate bijection:
$$ \left\{ \begin{array}{ccl}
\F_{P_{d_1},\cdots, P_{d_{k_1}}}^{\overline{Q_1}}\times \F_{P_{d_{k_1+1}},\cdots, P_{d_{k_1+k_2}}}^{\overline{Q_2}}
&\longrightarrow & \F_{P_{d_1},\cdots, P_{d_{k_1}+d_{k_2}}}^{\overline{Q_1}\prodg \overline{Q_2}} \\
((P_1,F_1),(P_2,F_2))&\longrightarrow &(P_1\prodg P_2,(F_1,F_2)).
\end{array}\right.$$
So:
$$\Xi(Q_1\prodg Q_2)=\sum_{(P_1,F_1),\:(P_2,F_2)}P_1 \prodg P_2=\Xi(Q_1)\prodg \Xi(Q_2).$$

Let us now consider $\Xi(Q_1\star Q_2)$. We put:
\begin{eqnarray*}
E_1&=&\left\{(P,I,R,F)\:/P\in \WNP^\D,\:\mbox{$I$ ideal of $P$},\: P-I=Q_1,\:I=Q_2,\:
(R,F)\in \F_{P_{d_1},\cdots,P_{d_k}}^{\overline{P}}\right\},\\
E_2&=&\left\{\begin{array}{l}
(P_1,F_1,P_2,F_2,R,I)\:/\:(P_1,F_1)\in \F_{P_{d_1},\cdots,P_{d_{k_1}}}^{\overline{Q_1}},
(P_2,F_2)\in \F_{P_{d_{k_1+1}},\cdots,P_{d_{k_1+k_2}}}^{\overline{Q_2}},\\
\:R\in \WNP,\:\mbox{$I$ ideal of $R$},\: R-I=P_1,\:I=P_2
\end{array}\right\}.\end{eqnarray*}
Then:
$$\Xi(Q_1\star Q_2)=\sum_{(P,I,R,F)\in E_1}R,\hspace{.5cm} \Xi(Q_1)\star \Xi(Q_2)=\sum_{(P_1,F_1,P_2,F_2,R,I)\in E_2} R.$$
There is a bijection from $E_1$ to $E_2$, sending $(P,I,R,F)$ to $(P_1,F_1,P_2,F_2,R,J)$ defined in the following way:
denoting $F=(P'_1,\cdots,P'_k)$, $J$ is the subposet of $R$ formed by the elements of the $P'_i$'s such that $i$ is an element of $I\subseteq P$;
$P_1=R_J$ and $F_1$ is formed by the $P'_i$'s such that $i\in P-I$; $P_2=J$ and $F_2$ is formed by the $P'_i$'s such that $i\in I$.
The only problematic point is to show that $J$ is an ideal of $R$: let $x \in J$, $y\in R$, such that $x \leq_h y$.
So $x\in P_i'$ for a certain $i\in I$ and $y \in P_j'$ for a certain $j \in P$.
By definition, $i\leq_h j$ in $P$. As $I$ is an ideal of $P$, $j\in I$, so $y\in J$.\\

As a consequence, $\Xi(Q_1\star Q_2)=\Xi(Q_1)\star \Xi(Q_2)$. So $\Xi$ is a morphism of $2$-As algebras.
As $\h_{\WNP}^\D$ is freely generated by the $\tun_r$'s, $\Xi$ is the unique $2$-As algebra morphism
which sends $\tun_r$ to $p_r$ for all $d\in \D$. \end{proof}

\begin{defi}\textnormal{\begin{enumerate}
\item For all $n \in \mathbb{N}^*$, we denote by $\WNP^{Ind}(n)$ the set of WN double posets of cardinal $n$, whose vertices are indexed, 
that is to say the set of couples $(P,d)$, where $P$ is a WN poset and  $d:P\longrightarrow \{1,\cdots,n\}$ is a bijection.
\item Let $P\in \WNP^{\mathbb{N}}$ and let $k \in \mathbb{N}$.
Then $P[k]$ is the element of $\WNP^{\mathbb{N}}$ whose subjacent double poset is $P$, and decorations obtained
from the decorations of $P$ by adding $k$.
\end{enumerate}}\end{defi}

\begin{theo}
For all $n\in \mathbb{N}^*$, we put $\P(n)=Vect\left(\WNP^{Ind}(n)\right)$. We define a structure of operad on $\P=(\P(n))_{n\in\mathbb{N}^*}$ in the following way:
for all $Q\in \WNP^{Ind}(k)$, for all $P_1,\cdots,P_k \in \WNP^{Ind}$, of respective cardinals $n_1,\cdots,n_k$, $Q\circ(P_1,\cdots,P_k)$ is $\Xi(Q)$,
where $\Xi:\h_{\WNP}^{\{1,\cdots,k\}}\longrightarrow \h_{\WNP}^{\mathbb{N}}$ is the unique morphism of $2$-As algebra which sends $\tun_1$ to $P_1$,
$\tun_2$ to $P_2[n_1]$, $\cdots$, and $\tun_k$ to $P_k[n_1+\cdots+n_{k-1}]$. The action of the symmetric group $\S_n$ on $\P(n)$ is given 
by permutation of the indices. This operad is isomorphic to the operad of $2$-As algebras.
\end{theo}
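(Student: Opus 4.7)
The plan is to identify $\P$ with the operad classically associated to a type of multilinear algebraic operations, here the $2$-$As$ operations, via the standard recipe: the $n$-th component of such an operad is the multilinear part of the free algebra on $n$ generators, with composition given by substitution. The key preliminary ingredient is the decorated version of Theorem \ref{25}, which the author notes in the remark following the proof of the free $B_\infty$ theorem: for any set $\D$, the algebra $\h_{\WNP}^\D$ is the free $2$-$As$ algebra generated by $\{\tun_d\}_{d\in\D}$, proved exactly as Theorem \ref{9}. Granting this, I observe that $\P(n)=Vect(\WNP^{Ind}(n))$ is precisely the multilinear component of $\h_{\WNP}^{\{1,\ldots,n\}}$, spanned by those decorated WN posets in which each label $1,\ldots,n$ occurs exactly once.

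Next I would check the operad structure. The composition $Q\circ(P_1,\ldots,P_k)=\Xi(Q)$ is well-defined: since $\Xi$ sends $\tun_i$ to $P_i[n_1+\cdots+n_{i-1}]$, and $Q$ contains each label $i\in\{1,\ldots,k\}$ exactly once, the decorated posets appearing in $\Xi(Q)$ each contain every element of $\{1,\ldots,n_1+\cdots+n_k\}$ exactly once, hence lie in $\P(n_1+\cdots+n_k)$. The unit $\tun_1\in\P(1)$ acts as identity because the unique $2$-$As$ morphism sending $\tun_1$ to $P$ coincides with $P$ on that one-generator component. Equivariance under $\S_n$ is immediate since the $\S_n$-action is by permutation of decorations and $\Xi$ is built from a universal property that commutes with relabeling of generators. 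Associativity of composition is the central axiom: given $Q\in\P(k)$, families $(P_i)$ and $(R_{i,j})$, both iterated composites are obtained by applying to $Q$ a morphism of $2$-$As$ algebras $\h_{\WNP}^{\{1,\ldots,k\}}\longrightarrow \h_{\WNP}^{\mathbb{N}}$ sending each $\tun_i$ to $(P_i\circ(R_{i,1},\ldots,R_{i,n_i}))$ with appropriate shifts; by the uniqueness in the universal property of the free $2$-$As$ algebra, these two morphisms coincide, giving the associativity diagram.

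Finally, the identification with the operad of $2$-$As$ algebras is essentially tautological once the free algebra property is established. By definition, the $n$-th space of the $2$-$As$ operad is the multilinear part of the free $2$-$As$ algebra on $n$ generators $x_1,\ldots,x_n$, with composition given by substitution and $\S_n$-action by relabeling. Sending a decorated poset in $\WNP^{Ind}(n)$ to its image under the identification $\h_{\WNP}^{\{1,\ldots,n\}}\cong \mathrm{Free}_{2\text{-}As}(x_1,\ldots,x_n)$ (where $\tun_i\leftrightarrow x_i$) yields linear isomorphisms $\P(n)\cong\mathcal{A}s_2(n)$; these commute with composition precisely because $\Xi$ is, by construction, the substitution morphism. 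The main obstacle I anticipate is purely bookkeeping: one must carefully track how decorations shift by $n_1+\cdots+n_{i-1}$ and verify that the combinatorial description of $\Xi$ via $Q$-families (from the preceding proposition) indeed matches substitution in the free $2$-$As$ algebra. This compatibility is forced by the uniqueness clause in the universal property, so no independent combinatorial verification is needed beyond what the previous proposition already establishes.
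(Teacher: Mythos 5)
Your proposal is correct and follows essentially the same route as the paper, whose proof is the one-line remark that the operad structure "comes from the description of an operad from its free algebras": you simply spell out that $\P(n)$ is the multilinear part of the free $2$-$As$ algebra $\h_{\WNP}^{\{1,\ldots,n\}}$ (freeness being the decorated analogue of the results on $(\h_{\WNP},\star,\prodg)$, used implicitly in the preceding proposition) and that $\Xi$ realizes substitution, so the operad axioms and the identification with the $2$-$As$ operad follow from the universal property.
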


In other terms:
$$Q \circ(P_1,\cdots,P_k)=\sum_{P\in \WNP^{Ind}_{\WNP}} n_{\overline{Q}}(P'_{d_1},\cdots,P'_{d_k};P)P,$$
where $d_1,\cdots,d_k$ are the indices of the vertices of $Q$, and $P'_i=P_i[n_1+\cdots+n_{i-1}]$ for all $i$.\\

\begin{proof} Comes from the description of an operad from its free algebras. \end{proof}

\begin{cor}
For all $n\in \mathbb{N}^*$, we put $\P'(n)=Vect\left(\WNP_h^{Ind}(n) \right)$.
Then $\P'=(\P'(n))_{n\in \mathbb{N}^*}$ is a suboperad of $\P$, isomorphic to the operad of $B_\infty$-algebras.
\end{cor}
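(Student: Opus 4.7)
My plan has two parts: first, I would show that $\P'$ is closed under the operadic composition inherited from $\P$, and then I would identify the resulting suboperad with the $B_\infty$ operad via the same ``operad from free algebras'' principle that underlies the preceding theorem.

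For the closure step, given $Q\in \WNP_h^{Ind}(k)$ and $P_1,\cdots,P_k\in \WNP_h^{Ind}$, I need to show that every WN poset $P$ occurring with nonzero coefficient in
$$Q\circ(P_1,\cdots,P_k)=\sum_{P} n_{\overline{Q}}(P'_{d_1},\cdots,P'_{d_k};P)\,P$$
is itself $h$-connected. Fixing such a $P$ and a $\overline{Q}$-family $(P'_i)_{i\in \overline{Q}}$ with each $P'_i\cong P_i$, I would take arbitrary $x\in P'_i$ and $y\in P'_j$ and construct a $\leq_h$-chain joining them. The case $i=j$ is handled directly by the $h$-connectedness of $P_i$. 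For $i\neq j$, the $h$-connectedness of $\overline{Q}$ furnishes a sequence $i=i_0,i_1,\cdots,i_s=j$ of elements of $\overline{Q}$ with consecutive pairs $\leq_h$-comparable in $Q$; the defining clause of a $Q$-family then lifts each such comparable pair up to an actual $\leq_h$-comparable pair of elements of $P$ lying in $P'_{i_l}$ and $P'_{i_{l+1}}$. Splicing these junction elements together with internal $\leq_h$-chains inside each $P'_{i_l}$ (which exist by the $h$-connectedness of $P_{i_l}$) produces a $\leq_h$-chain from $x$ to $y$.

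For the identification step, I would invoke the principle, already used in the proof of the preceding theorem, that an operad is recovered from its free algebras as the multilinear parts on $n$ generators. The remark following the freeness theorem for $Prim(\h_{\WNP})$ asserts that the decorated version $Prim(\h^\D_{\WNP})$ is the free $B_\infty$-algebra generated by $\D$. Specialising to $\D=\{1,\cdots,n\}$ and extracting the multilinear part, i.e. the subspace spanned by decorated $h$-connected WN posets whose decoration map is a bijection onto $\{1,\cdots,n\}$, recovers exactly $vect(\WNP_h^{Ind}(n))=\P'(n)$, with the $\S_n$-action given by relabelling. That the composition inherited from $\P$ coincides with the operadic composition of the $B_\infty$ operad is then automatic: the $B_\infty$ brackets were defined as $\pi \circ m_\star$ with $m_\star$ built from the $2$-$As$ products, so composing $B_\infty$ operations on $Prim(\h_{\WNP})$ amounts to composing $2$-$As$ operations on $\h_{\WNP}$ and projecting to primitives.

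The main obstacle is the $h$-connectedness argument in the first step, where both the $h$-connectedness of $\overline{Q}$ and both directions of the ``if and only if'' clauses in the definition of a $Q$-family must be used in order to genuinely transport $\leq_h$-chains from $\overline{Q}$ up to $P$. The second step is essentially formal once closure is established, since no new combinatorial input beyond the decorated version of the $B_\infty$ freeness theorem is required.
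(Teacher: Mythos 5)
Your proposal is correct and follows essentially the route the paper intends: the corollary is obtained from the description of an operad via its free algebras, using the remark that $Prim(\h^\D_{\WNP})$, spanned by decorated $h$-connected WN posets, is the free $B_\infty$-algebra on $\D$, so that the multilinear part on $n$ labels is exactly $vect(\WNP_h^{Ind}(n))$ and the composition is the restriction of that of $\P$ (the restriction of the $2$-$As$ morphism $\Xi$ to primitives being the corresponding $B_\infty$ morphism, as in the freeness theorem). Your explicit combinatorial check that composing $h$-connected posets yields only $h$-connected posets is a correct supplement that the paper leaves implicit (it only needs the existence direction of the $\leq_h$ clause in the definition of a $Q$-family, and can also be seen formally from the fact that $\Xi$ sends primitives to primitives).
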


For example:
$$\tddeux{$1$}{$2$}\circ (\tdun{$1$},\tddeux{$1$}{$2$})=\pdtroisun{$3$}{$1$}{$2$}+\tdtroisdeux{$1$}{$2$}{$3$},\hspace{.5cm}
\tddeux{$1$}{$2$}\circ (\tddeux{$1$}{$2$},\tdun{$1$})=\tdtroisun{$1$}{$3$}{$2$}+\tdtroisdeux{$1$}{$2$}{$3$}.$$

The operation $\langle-;-\rangle:V^{\otimes m}\otimes V^{\otimes n}\longrightarrow V$ acting on any $B_\infty$-algebra $V$
corresponds to the element $b_{m,n}=(\tun_1\ldots \tun_m)\prodh (\tun_{m+1}\ldots \tun_{m+n})$ of $\WNP_h^{Ind}(m+n)$.
For example, $b_{1,1}=\tddeux{$1$}{$2$}$, $b_{1,2}=\tdtroisun{$1$}{$3$}{$2$}$, $b_{2,1}=\pdtroisun{$3$}{$1$}{$2$}$
and $b_{2,2}=\pdquatresept{$1$}{$2$}{$3$}{$4$}$. The Hasse graph of $b_{m,n}$ is a complete $(m,n)$ bipartite graph.

\bibliographystyle{amsplain}
\bibliography{biblio}

\end{document}